\newtheorem{theorem}{Theorem}[section]
\newtheorem{lemma}{Lemma}[section]
\newtheorem{remark}{Remark}[section]
\newtheorem{proposition}{Proposition}[section]
\numberwithin{equation}{section}
\newcommand{\R}{\mathbb R}
\newcommand{\T}{{\mathbb T}}
\newcommand{\Z}{{\mathbb Z}}
\newcommand{\ft}{{\mathcal{F}}}
\newcommand{\Sch}{{\mathcal{S}}}
\newcommand{\supp}{{\mbox{supp}}}
\newcommand{\px}{\partial_x}
\newcommand{\pt}{\partial_t}
\def\norm#1{\|#1\|}
\def\bra#1{\langle#1\rangle}
\def\wt#1{\widetilde{#1}}
\def\wh#1{\widehat{#1}}
\def\set#1{\{#1\}}
\begin{document}

	\pagenumbering{arabic}	
\title[IBVP for the Kawahara equation on the half-line]{The initial-boundary value problem for the Kawahara equation on the half-line}

\author[M. Cavalcante]{M\'arcio Cavalcante}

\address{\emph{Instituto de Matem\'{a}tica, Universidade Federal de Alagoas\\ Macei\'o AL-Brazil}}
\email{marcio.melo@im.ufal.br}

\author[C. Kwak]{Chulkwang Kwak}

\address{\emph{Facultad de Matem\'{a}ticas, Pontificia Universidade Cat\'olica de Chile\\ Santiago-Chile}}
\email{chkwak@mat.uc.cl}
\thanks{C. Kwak is supported by FONDECYT de Postdoctorado 2017 Proyecto No. 3170067.}

\subjclass[2010]{35Q53, 35G31} \keywords{Kawahara equation, initial-boundary value problem, local well-posedness}

\begin{abstract}
This paper concerns the initial-boundary value problem (IBVP) of the Kawahara equation posed on the right and left half-lines. We  prove the local well-posedness in the low regularity Sobolev space. We introduce the Duhamel boundary forcing operator, which is introduced by Colliander - Kenig \cite{CK} in the context of Airy group operators, to construct solutions on the whole line. We also give the bilinear estimate in $X^{s,b}$ space for $b < \frac12$, which is almost sharp compared to IVP of Kawahara equation \cite{CLMW2009, JH2009}.

\end{abstract}
\maketitle
\tableofcontents

\section{Introduction}
In this paper, we consider the following Kawahara equation\footnote{It is well-known of the form $\pt u +au\px u + b\px^3u - \px^5u=0$ for arbitrary constants $a,b \in \R$. We, however, use the form \eqref{eq:5kdv} for the simplicity, since the dominant dispersion term is the fifth-order term and constants do not affect our analysis.}:
\begin{equation}\label{eq:5kdv}
\pt u - \px^5 u + \px(u^2)=0.
\end{equation}
The Kawahara equation was first proposed by Kawahara \cite{Kawahara1972} describing solitary-wave propagation in media. Also, the Kawahara equation can be described in the theory of magneto-acoustic sound waves in plasma and in theory of shallow water waves with surface tension. For another physical background of Kawahara equation or in view of perturbed equation of KdV equation, see \cite{HS1988, PRG1988, Boyd1991} and references therein. 

Kawahara equation \eqref{eq:5kdv}, posed on the whole line $\R$, admits following three conservation laws:
\[M[u(t)] := \int u(t,x) \; dx = M[u(0)],\]
\begin{equation}\label{eq:L2}
E[u(t)] := \frac12 \int u^2(t,x) \; dx = E[u(0)]
\end{equation}
and
\begin{equation}\label{eq:hamiltonian}
H[u(t)] := \frac12 \int (\px^2 u)^2(t,x) \; dx - \frac13 \int u^3(t,x) \; dx = H[u(0)].
\end{equation}
Moreover, \eqref{eq:hamiltonian} allow us to represent \eqref{eq:5kdv} as the Hamiltonian equation:
\begin{equation*}
u_t = \partial_x \nabla_u H\left(u\left(t\right)\right),
\end{equation*}
where $\nabla_u$ is the variational derivative with respect to $u$, but not a completely integrable system in contrast to the KdV equation.

\subsection{Well-posedness results on $\R$.} 
The Cauchy problem for the Kawahara equation on $\R$ has been extensively studied, and here we only give some of previous works. The local well-posedness of Kawahara equation was first established by Cui and Tao \cite{CT2005}. They proved the Strichartz estimate for the fifth-order operator and obtained the local well-posedness in $H^s(\R)$ $s > 1/4$ as its application, which implies, in addition to the energy conservation law \eqref{eq:hamiltonian}, $H^2(\R)$ global well-posedness. Later,  Cui, Deng and Tao \cite{CDT2006} improved the previous result to the negative regularity Sobolev space $H^s(\R)$, $s > -1$, and Wang, Cui and Deng \cite{WCD2007} further improved to the lower regularity $s \ge -7/5$. In both paper, authors used Fourier restriction norm method, while more delicate analysis has been performed in the latter one than the former one. $L^2(\R)$ conservation law \eqref{eq:L2} allows the former result to extend the global one in $L^2(\R)$. In \cite{WCD2007}, they used \emph{I-method} to establish the global well-posedness in $H^s(\R)$, $s > -1/2$. In \cite{CLMW2009} and \cite{JH2009}, authors independently prove the local well-posedness in $H^s(\R)$, $s > -7/4$, while their methods are same, particularly, the Fourier restriction norm method in addition to Tao's $[K;Z]$-multiplier norm method. At the critical regularity Sobolev space $H^{-7/4}(\R)$, Chen and Guo \cite{CG2011} proved local and global well-posedness by using Besov-type critical space and \emph{I-method}. Kato \cite{Kato2011} proved the local well-posedness for $s \ge -2$ by modifying $X^{s,b}$ space and the ill-posedness for $s<-2$ in the sense that the flow map is discontinuous. We also refer to \cite{Huo2005, YL2010} and references therein for more results.

\subsection{The models on the half-lines and main results} We mainly consider the Kawahara equation on the right half-line $(0,\infty)$
\begin{equation}\label{kawahararight}
\begin{cases}
\pt u - \px^5 u + \px(u^2)=0, & (t,x)\in (0,T)  \times(0,\infty),\\
u(0,x)=u_0(x),                                   & x\in(0,\infty),\\
u(t,0)=f(t),\ u_x(t,0)=g(t)& t\in(0,T)
\end{cases}
\end{equation}
and on the left half-line $(-\infty, 0)$
\begin{equation}\label{kawaharaleft}
\begin{cases}
	\pt u - \px^5 u + \px(u^2)=0, & (t,x)\in  (0,T) \times (-\infty,0),\\
	u(0,x)=u_0(x),                                   & x\in(-\infty,0),\\
	u(t,0)=f(t),\ 	u_x(t,0)=g(t),\ u_{xx}(t,0)=h(t)& t\in(0,T).
\end{cases}
\end{equation}
The difference between the numbers of boundary conditions in \eqref{kawahararight} and \eqref{kawaharaleft} is motivated by integral identities on smooth solutions to the linear equation 
\begin{equation}\label{eq:linear 5kdv}
\partial_t u-\partial_x^5u=0.
\end{equation}
Indeed, for a smooth solution $u$ to \eqref{eq:linear 5kdv} and $T>0$, we have 
\begin{equation}\label{unico1}
\begin{split}
	\int_0^{\infty}u^2(T,x)dx=&\int_0^{\infty}u^2(0,x)dx -  \int_0^T(\partial_x^2u)^2(t,0) dt + 2\int_0^T\partial_x^3u(t,0)\partial_xu(t,0)dt\\
&	-2\int_0^T\partial_x^4u(t,0)u(t,0)dt
	\end{split}
\end{equation}
and
\begin{equation}\label{unico2}
\begin{split}
	\int_{-\infty}^{0}u^2(T,x)dx=&\int_{-\infty}^{0}u^2(0,x)dx + \int_0^T(\partial_x^2u)^2(t,0) dt - 2\int_0^T\partial_x^3u(t,0)\partial_xu(t,0)dt\\
&	+2\int_0^T\partial_x^4u(t,0)u(t,0)dt
	\end{split}
\end{equation}
by using \eqref{eq:linear 5kdv} and the integration by parts. Thus, under the condition 
\begin{equation}\label{eq:cond}
u(0,x)=0 \quad \mbox{for} \quad x>0, \qquad u(t,0)=\partial_xu(t,0)=0 \quad \mbox{for} \quad 0<t<T,
\end{equation} 
we can conclude from \eqref{unico1} that $u(T,x)=0$ for all $x>0$, while $u(t,x)\neq 0$ for $x<0$ exists under the same condition \eqref{eq:cond} (but $u(0,x)=0$ for $x < 0$) due to \eqref{unico2}. Indeed, even when $u_0(x) =0$ (zero initial data), $f = g = 0$ (first two zero boundary conditions) and $w=0$ (linear problem), there exist non-trivial $\gamma_j$, $j=1,2,3$, such that the solutions of the form \eqref{eq:solution2} with \eqref{eq:f2}--\eqref{eq:h2} in Section \ref{sec:main proof 2} satisfy the linear Kawahara equation \eqref{kawaharaleft} (see also \S 2.1 in \cite{Holmerkdv} for KdV equation case). Thus, from \eqref{unico2}, one can see that one more boundary condition for \eqref{kawaharaleft} is necessary in contrast to \eqref{kawahararight}.

It is well-known by Kenig, Ponce and Vega \cite{KPV1991} that the \emph{local smoothing effect} for the fifth-order linear group operator $e^{t\px^5}$
\[\|\partial_x^2e^{t\partial_x^5}\phi\|_{L_x^{\infty}L_t^2(\mathbb{R}_t)}\leq c\|\phi\|_{L_x^2(\mathbb{R})},\]
plays an important role in the low regularity local theory for the fifth-order equations. Moreover, it can be expressed as 
\begin{equation}\label{eq:lsm}
\|\partial_x^je^{t\partial_x^5}\phi\|_{L_x^{\infty}\dot{H}^{\frac{s+2-j}{5}}(\mathbb{R}_t)}\leq c\|\phi\|_{\dot{H}^{s}(\mathbb{R})},\ \text{for}\ j=0,1,2,
\end{equation}
which motivates the relation of regularities among initial and boundary data. Thus, we set the initial-boundary conditions for \eqref{kawahararight}
\begin{equation}\label{regularidade}
\begin{cases}
u_0\in H^s(\mathbb{R}^+),\; f(t)\in H^{\frac{s+2}{5}}(\mathbb{R}^+)\; \text{and}\;  g(t)\in H^{\frac{s+1}{5}}(\mathbb{R}^+),\\
u_0(0)=f(0),\; \text{when}\; \frac12 < s < \frac32,\\
u_0(0)=f(0),\ \partial_xu_0(0)=g(0),  \text{when}\; \frac32<s<\frac52
\end{cases}
\end{equation}
and for \eqref{kawaharaleft}
\begin{equation}\label{regularidadeleft}
 \begin{cases}
u_0\in H^s(\mathbb{R}^-), \; f(t)\in H^{\frac{s+2}{5}}(\mathbb{R}^+),\; g(t)\in H^{\frac{s+1}{5}}(\mathbb{R}^+)\; \text{and}\;  h(t)\in H^{\frac{s}{5}}(\mathbb{R}^+),\\
	u_0(0)=f(0),\; \text{when}\; \frac12 < s < \frac32,\\
u_0(0)=f(0),\ \partial_xu_0(0)=g(0),  \text{when}\; \frac32<s<\frac52.
 \end{cases}
 \end{equation}

When $s > \frac12$, the Sobolev embedding theorem allows that the trace map $u_0 \mapsto u_0(0)$ is well-defined on $H^s(\R^+)$. By the same reason, $f(0)$ is well-defined on $H^{\frac{s+2}{5}}(\R^+)$, since if $s > \frac12$, then $\frac{s+2}{5} > \frac12$. Since $u(0,0)$ means both quantities $u_0(0)$ and $f(0)$, they should be identical. Moreover, when $s > \frac32$, then both $s-1 > \frac12$ and $\frac{s+1}{5} > \frac12$ hold. Thus $\px u_0 (0)$ and $g(0)$ are well-defined on $H^{s-1}(\R^+)$ and $H^{\frac{s+1}{5}}(\R^+)$, respectively, and they must be identical. On the other hand, under the regularity condition $s < \frac52$, $\px^2 u \in H^{s-2}$ and $h \in H^{\frac{s}{5}}$ in \eqref{kawaharaleft} are not well-defined trace at zero. 

The main goal in the paper is to show the local well-posedness of \eqref{kawahararight} and \eqref{kawaharaleft} in the low regularity Sobolev space. In the previous works on the IBVP of the Kawahara equation and its related equations posed on the right half-line, e.g. \cite{San2003, Lar1, Fam2009, Lar2} and references therein, authors proved local and global well-posedness in the high regularity function spaces with exponential decay property, or weighted Sobolev space for at least nonnegative regularity. 

On the other hand, the IBVP for \eqref{kawaharaleft} is first considered in this paper as far as authors know it. The principal contribution in this paper is to study both IBVPs of \eqref{kawahararight} and \eqref{kawaharaleft} in the low regularity (including negative regularity) setting (compared to IVP of \eqref{eq:5kdv}) without any additional weight condition. 

We first state the main theorem for \eqref{kawahararight}.
\begin{theorem}\label{theorem1}
	Let $s \in (-\frac74,\frac52) \setminus \set{\frac12, \frac32}$. For given initial-boundary data $(u_0,f,g)$ satisfying \eqref{regularidade}, there exist a positive time $T$ depending on $\|u_0\|_{H^s(\mathbb{R}^+)}$, $\|f\|_{H^{\frac{s+2}{5}}(\mathbb{R}^+)}$ and $\|g\|_{H^{\frac{s+1}{5}}(\mathbb{R}^+)}$, and a unique solution $u(t,x) \in C((0 , T);H^s(\R^+))$ to \eqref{kawahararight}-\eqref{regularidade} satisfying
\[u \in C\bigl(\mathbb{R}^+;\; H^{\frac{s+2}{5}}(0,T)\bigr) \cap X^{s,b}((0,T) \times \R^+) \cap D^{\alpha}((0,T) \times \R^+) \; \mbox{ and } \; \partial_xu\in C\bigl(\R^+;\; H^{\frac{s+1}{5}}(0,T)\bigr)\]
for some $b(s) < \frac12$ and $\alpha(s) > \frac12$. Moreover, the map $(u_0,f,g)\longmapsto u$ is analytic from $H^s(\mathbb{R}^+)\times H^{\frac{s+2}{5}}(\mathbb{R}^+)\times H^{\frac{s+1}{5}}(\mathbb{R}^+)$ to  $C\big((0,T);\,H^s(\mathbb{R}^+)\big)$.
\end{theorem}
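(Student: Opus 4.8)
The plan is to recast the IBVP \eqref{kawahararight}--\eqref{regularidade} as an integral equation posed on the whole line $\R$, solve it by a fixed point argument, and then restrict the resulting function back to $(0,\infty)$. First I would extend the initial datum $u_0 \in H^s(\R^+)$ to some $\U \in H^s(\R)$ by a bounded extension operator, so that the free evolution $e^{t\px^5}\U$ and the nonlinear Duhamel term $\int_0^t e^{(t-t')\px^5}\px(u^2)\,dt'$ are governed by the whole-line dispersive theory (energy, the local smoothing \eqref{eq:lsm}, Kato smoothing and maximal-function estimates). These two pieces alone do not respect the data at $x=0$, so the ansatz for the solution is the sum of the free evolution, the nonlinear Duhamel term, and a combination of Duhamel boundary forcing terms built from the fifth-order analogue of the Colliander--Kenig operator \cite{CK}.

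The boundary forcing operators are the technical heart of the construction. Following \cite{CK}, I would introduce a family of operators $\mathcal{L}^{\lambda}$ acting on a time function by integrating the fundamental solution of $\pt-\px^5$ (and its spatial fractional integrals) against a boundary input; each such operator solves the linear equation on $x\neq 0$ and has prescribed trace behaviour at $x=0$, computable by explicit oscillatory-integral asymptotics. Since \eqref{kawahararight} prescribes exactly the two conditions $u(t,0)=f$ and $\px u(t,0)=g$ (the count dictated by the number of admissible characteristic roots on the right half-line, as in the motivation behind \eqref{unico1}), I would apply a linear combination of two such operators to auxiliary unknown inputs and require that the full ansatz carry the correct traces. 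This yields a linear system for the auxiliary inputs whose coefficient matrix is assembled from the values of the $\mathcal{L}^{\lambda}$ and their derivatives at $x=0$; the key algebraic point is to check that this matrix is invertible, a Vandermonde-type nondegeneracy of the fifth roots entering the oscillatory phase, so that the inputs are determined by $f$, $g$, and the boundary traces of the already-constructed free and nonlinear terms.

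With the ansatz fixed, I would close the argument by the contraction mapping principle in a time-localized space of the form $X^{s,b}\cap D^{\alpha}$, intersected with the trace classes $C(\R^+;H^{\frac{s+2}{5}})$ and $C(\R^+;H^{\frac{s+1}{5}})$ for $\px u$, for suitable $b(s)<\tfrac12$ and $\alpha(s)>\tfrac12$. The linear estimates for $e^{t\px^5}\U$, the mapping properties of the boundary forcing operators (both their $X^{s,b}$ and $D^{\alpha}$ norms and their trace regularity, matched to the exponents $\tfrac{s+2}{5}$ and $\tfrac{s+1}{5}$ forced by \eqref{eq:lsm}), and the bilinear estimate for $\px(u^2)$ would combine to show the map is a contraction on a small ball for $T$ small, giving existence and uniqueness. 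The analyticity of the data-to-solution map is then automatic from the quadratic, power-series structure of the fixed-point scheme.

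The hard part will be the bilinear estimate below $b=\tfrac12$. The boundary forcing terms only lie in $X^{s,b}$ for $b<\tfrac12$, so the standard Kawahara bilinear estimate proved at $b=\tfrac12$ in \cite{CLMW2009, JH2009} is unavailable; instead I must establish $\|\px(uv)\|_{X^{s,b-1}}\les \|u\|_{X^{s,b}}\|v\|_{X^{s,b}}$ (and its companions pairing $X^{s,b}$ against $D^{\alpha}$) uniformly down to some $b<\tfrac12$. This forces a careful frequency-and-modulation case analysis driven by the resonance identity of the fifth-order symbol, and is almost sharp in $s$ relative to the $s>-\tfrac74$ threshold. A secondary difficulty is the $s$-dependence of the construction: the extension operator, the compatibility conditions, and the precise boundary forcing combination change across the ranges separated by $s=\tfrac12$ and $s=\tfrac32$ (hence their exclusion), so the estimates must be organized to run the same contraction scheme after the correct choice of operators in each subinterval.
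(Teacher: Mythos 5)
Your plan reproduces the paper's architecture essentially step for step: extend $u_0$ to $\R$, take the ansatz $u=\mathcal{L}_+^{\lambda_1}\gamma_1+\mathcal{L}_+^{\lambda_2}\gamma_2+e^{t\px^5}\underline{u}_0+\mathcal{D}w$ built from the fifth-order Colliander--Kenig boundary forcing operators, invert the $2\times 2$ trace matrix to express $(\gamma_1,\gamma_2)$ in terms of $f$, $g$ and the traces of the free and Duhamel parts, and contract in $X^{s,b}\cap D^{\alpha}$ (intersected with the trace classes) for some $b(s)<\frac12<\alpha$, using a bilinear estimate valid below $b=\frac12$. The differences you introduce are cosmetic: the paper achieves smallness through the scaling \eqref{eq:scaling} rather than by shrinking $T$, and the invertibility of the trace matrix is not a Vandermonde argument but the explicit condition \eqref{eq:lambda condition1}, $\lambda_1-\lambda_2\neq 5n$, read off from the trace formulas of Lemma \ref{trace1}.

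There is, however, a genuine gap: you omit the time-adapted Bourgain space $Y^{s,b}$ (weight $\bra{\tau}^{2s/5}\bra{\tau-\xi^5}^{2b}$) and the companion bilinear estimate $\norm{\px(uv)}_{Y^{s,-b}}\lesssim\norm{u}_{X^{s,b}\cap D^{\alpha}}\norm{v}_{X^{s,b}\cap D^{\alpha}}$ of Proposition \ref{prop:bi2}, and without them your iteration does not close on the full range of $s$. Your contraction space must contain the time-trace norms $\sup_x\norm{\px^j u(\cdot,x)}_{H^{\frac{s+2-j}{5}}}$, $j=0,1$, both because the forcing inputs $\gamma_i$ are defined through the boundary traces of $F=e^{t\px^5}\underline{u}_0+\mathcal{D}w$ and because the theorem asserts continuity of these traces; hence you must bound $\sup_x\norm{\px^j\mathcal{D}(\px(u^2))(\cdot,x)}_{H^{\frac{s+2-j}{5}}}$. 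This is precisely where the $X^{s,-b}$ norm of the nonlinearity is insufficient: in the proof of Lemma \ref{duhamel}(b) the bound by $\norm{w}_{X^{s,-b}}$ alone hits the divergence \eqref{eq:restriction} and survives only in the window $0<\frac{s+2-j}{5}\le\frac12$; outside it (for instance $j=1$ with $s$ near $-\frac74$, or $j=0$ with $s>\frac12$) the estimate requires the additional term $\norm{w}_{Y^{s,-b}}$, which in turn forces the second bilinear estimate. A smaller related slip: you state the bilinear estimate with target $X^{s,b-1}$, but since $b<\frac12$ one has $b-1<-b$, so this is weaker than the $X^{s,-b}$ estimate that Lemma \ref{duhamel} actually pairs with $X^{s,b}\cap D^{\alpha}$; the iteration needs the $-b$ version proved in Proposition \ref{prop:bi1}.
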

Moreover, we have the following theorem for \eqref{kawaharaleft}.
\begin{theorem}\label{theorem12}
	Let $s \in (-\frac74,\frac52) \setminus \set{\frac12, \frac32}$. For given initial-boundary data $(u_0,f,g,h)$ satisfying \eqref{regularidadeleft}, there exist a positive time $T$ depending on $\|u_0\|_{H^s(\mathbb{R}^-)}$, $\|f\|_{H^{\frac{s+2}{5}}(\mathbb{R}^+)}$, $\|g\|_{H^{\frac{s+1}{5}}(\mathbb{R}^+)}$ and $\|h\|_{H^{\frac{s}{5}}(\mathbb{R}^+)}$, and a unique solution $u(t,x) \in C((0 , T);H^s(\R^-))$ to \eqref{kawaharaleft}-\eqref{regularidadeleft} satisfying
\[\begin{aligned}
&u \in C\bigl(\mathbb{R}^-;\; H^{\frac{s+2}{5}}(0,T)\bigr) \cap X^{s,b}((0,T) \times \R^-) \cap D^{\alpha}((0,T) \times \R^-), \\
&\partial_xu\in C\bigl(\R^-;\; H^{\frac{s+1}{5}}(0,T)\bigr) \; \mbox{ and } \; \partial_x^2u\in C\bigl(\R^-;\; H^{\frac{s}{5}}(0,T)\bigr)
\end{aligned}\]
for some $b(s) < \frac12$ and $\alpha(s) > \frac12$. Moreover, the map $(u_0,f,g,h)\longmapsto u$ is analytic from $H^s(\mathbb{R}^-)\times H^{\frac{s+2}{5}}(\mathbb{R}^+)\times H^{\frac{s+1}{5}}(\mathbb{R}^+) \times H^{\frac{s}{5}}(\mathbb{R}^+)$ to  $C\big((0,T);\,H^s(\mathbb{R}^-)\big)$.
\end{theorem}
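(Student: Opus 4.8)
The plan is to prove Theorem \ref{theorem12} by adapting the argument for Theorem \ref{theorem1} on the right half-line, the essential new ingredient being the treatment of the third boundary condition $h(t)=\partial_x^2u(t,0)$. Following the boundary forcing method of Colliander--Kenig \cite{CK}, I would first reduce the IBVP \eqref{kawaharaleft}--\eqref{regularidadeleft} to an equivalent integral equation posed on the whole line $\R$, solve it by the contraction mapping principle in a space of the form $X^{s,b}((0,T)\times\R^-)\cap D^{\alpha}((0,T)\times\R^-)$ with $b(s)<\tfrac12$ and $\alpha(s)>\tfrac12$, and then verify that restricting the whole-line solution to $(-\infty,0)$ recovers \eqref{kawaharaleft} together with the three prescribed traces.

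First I would build the integral representation. After extending $u_0\in H^s(\R^-)$ to $\tilde u_0\in H^s(\R)$, the candidate solution is written as the sum of the free evolution $e^{t\partial_x^5}\tilde u_0$, the nonlinear Duhamel term $\int_0^t e^{(t-t')\partial_x^5}\partial_x(u^2)\,dt'$, and three Duhamel boundary forcing terms as in \eqref{eq:solution2}, with forcing data \eqref{eq:f2}--\eqref{eq:h2} and coefficients $\gamma_j$, $j=1,2,3$, chosen so that the traces $u(t,0)$, $\partial_xu(t,0)$, $\partial_x^2u(t,0)$ equal $f,g,h$. Matching the traces amounts to inverting a $3\times3$ linear system whose entries arise from evaluating the forcing operators and their first two $x$-derivatives at $x=0^-$; invertibility of this Vandermonde-type system follows from the distinctness of the relevant fifth roots of the symbol, which is exactly the mechanism behind the asymmetry between the two half-lines pointed out after \eqref{eq:cond} and \eqref{unico2}.

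Next I would establish the linear estimates for each building block in the solution space: the group bound and the Kato-type smoothing \eqref{eq:lsm} for $e^{t\partial_x^5}\tilde u_0$, together with the boundedness of the nonlinear Duhamel operator and of the three boundary forcing operators on $X^{s,b}\cap D^{\alpha}$, as well as their continuity up to the boundary. The latter yields the stated trace regularities $u\in C(\R^-;H^{\frac{s+2}{5}})$, $\partial_xu\in C(\R^-;H^{\frac{s+1}{5}})$ and $\partial_x^2u\in C(\R^-;H^{\frac{s}{5}})$, consistent with the low regularities imposed on $f,g,h$ in \eqref{regularidadeleft}. These are essentially the estimates used for the right half-line, with the additional work concentrated on the forcing operator attached to $h$, whose symbol carries the weakest smoothing and hence demands the most careful bookkeeping near $x=0$.

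The main obstacle, and the technical core of the argument, is the bilinear estimate controlling $\partial_x(u^2)$ in the solution space at the regularity $b<\tfrac12$ advertised in the abstract. Because the boundary forcing operators are bounded only for $b$ strictly below $\tfrac12$, the standard IVP bilinear estimate of \cite{CLMW2009, JH2009}, which holds at $b=\tfrac12$, is unavailable and must be re-derived at $b<\tfrac12$; compensating for the reduced time regularity while keeping the spatial regularity $s$ down to $-\tfrac74$ — in particular controlling the near-resonant region where $\xi_1^5+\xi_2^5\approx(\xi_1+\xi_2)^5$ — is the crux of the matter. Once this estimate is secured, closing the scheme is routine: I would combine the linear and bilinear bounds to show the integral map is a contraction on a small ball for $T$ small enough in terms of the data norms, extract the unique fixed point, and read off the continuity in time, the trace regularity, and the analyticity of $(u_0,f,g,h)\mapsto u$ from the smoothing built into the fixed-point construction.
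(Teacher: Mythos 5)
Your proposal follows essentially the same route as the paper's proof: the solution is written as free evolution plus the nonlinear Duhamel term plus three boundary forcing terms $\mathcal{L}_-^{\lambda_j}\gamma_j$ as in \eqref{eq:solution2}, the traces \eqref{eq:f2}--\eqref{eq:h2} are matched by inverting the $3\times3$ matrix $A(\lambda_1,\lambda_2,\lambda_3)$ (invertible precisely when the $\lambda_j$ satisfy \eqref{eq:lambda condition4}), and the fixed point is closed using the linear estimates of Sections \ref{sec:energy} and the bilinear estimates of Propositions \ref{prop:bi1} and \ref{prop:bi2} at $b<\frac12$. The one notable deviation is that the paper obtains the contraction by using the scaling symmetry \eqref{eq:scaling} to reduce to small data on a unit time interval, rather than by taking $T$ small directly as you suggest, which is the cleaner mechanism here since the stated bilinear estimates carry no explicit positive power of $T$.
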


\begin{remark}
The lower bound of regularity range addressed in Theorems \ref{theorem1} and \ref{theorem12} follows from the following bilinear estimate 
\begin{equation}\label{bi.rem}
\norm{\px(uv)}_{X^{s,-b} \cap Y^{s,-b}} \le c\norm{u}_{X^{s,b} \cap D^{\alpha}}\norm{v}_{X^{s,b} \cap D^{\alpha}},
\end{equation}
where $Y^{s,b}$ is an intermediate norm in the iteration process, which will be introduced in Section \ref{sec:sol space}. It can be seen in \cite{Kato2011} that the bilinear estimate \eqref{bi.rem} fails when $s < -\frac74$ for $b > \frac12$ due to the \emph{high $\times$ high $\Rightarrow$ low} interaction component along the non-resonant phenomenon, which is the typical enemy of the bilinear estimates in the low regularity. Although the $X^{s,b}$ spaces for $b<\frac12$ is used to study the IBVP, the counter-example introduced in \cite{Kato2011} is still valid to our work, since the exponent $b$ is chosen very close to $\frac12$. Thus, one may say Theorems \ref{theorem1} and \ref{theorem12} are almost sharp compared with \cite{CLMW2009,JH2009}.

On the other hand, Chen and Guo \cite{CG2011} overcame the logarithmic divergence appearing in \eqref{bi.rem} (without $Y^{s,b}$ spaces) at the end-point regularity by using the Besov-type spaces ($X^{0,\frac12,1}$ spaces). Thereafter, Kato \cite{Kato2011} used an appropriate weight (weighted $X^{s,b}$ space introduced  by Bejenaru and Tao \cite{BT2006}, and further developed in \cite{BS2008, Kishimoto2009, Kishimoto2008, KT2010}) to extend the regularity range, where the bilinear estimate holds. However, the use of Besov-type or weighted $X^{s,b}$ spaces for the IBVP is not investigated as far as authors know, thus the local-well-posedness of the IBVP of \eqref{eq:5kdv} for $s \le -\frac74$ remains as an interesting open problem (also other equations, for example, KdV equation at $s =-\frac34$ compared to \cite{Guo2009, Kishimoto2009-1}).
\end{remark}

The proof is based on the Picard iteration method for a suitable extension of solutions. We first convert the IBVP of \eqref{eq:5kdv} posed in $\R^+ \times \R^+$ to the initial value problem (IVP) of \eqref{eq:5kdv} (integral equation formula) in the whole space $\R \times \R$ (see Section \ref{sec:Duhamel boundary forcing operator}) by using the Duhamel boundary forcing operator. The energy and nonlinear estimates (will be established in Sections \ref{sec:energy} and \ref{sec:bilinear}, respectively) allow us to apply the Picard iteration method to IVP of \eqref{eq:5kdv}, and hence we can complete the proof. The new ingredients here are the Duhamel boundary forcing operator for the fifth-order linear equation and its analysis, and the bilinear estimate.

It is well-known that the Kawahara equation \eqref{eq:5kdv} enjoys the scaling symmetry: if $u$ is a solution to \eqref{eq:5kdv}, $u_{\lambda}$ defined by
\[u_{\lambda}(t,x) := \lambda^4u(\lambda^5t,\lambda x), \qquad \lambda > 0\]
is a solution to \eqref{eq:5kdv} as well. A straightforward calculation gives
\begin{equation}\label{eq:scaling}
\begin{aligned}
\norm{u_{0,\lambda}}_{H^s} &+ \norm{f_{\lambda}}_{H^{\frac{s+2}{5}}} + \norm{g_{\lambda}}_{H^{\frac{s+1}{5}}} + \norm{h_{\lambda}}_{H^{\frac{s}{5}}} \\
=&\lambda^{\frac72}\bra{\lambda}^s\norm{u_0}_{H^s} + \lambda^{\frac32}\bra{\lambda}^{s+2}\norm{f}_{H^{\frac{s+2}{5}}} + \lambda^{\frac52}\bra{\lambda}^{s+1}\norm{g}_{H^{\frac{s+1}{5}}} + \lambda^{\frac72}\bra{\lambda}^{s}\norm{f}_{H^{\frac{s}{5}}},
\end{aligned}
\end{equation}
which allows us to regard IBVPs \eqref{kawahararight} and \eqref{kawaharaleft} as small data problems.

\begin{remark}
The analysis of Duhamel boundary forcing operator established in this paper can be applied to both the fifth-order equation with different nonlinearities and the same equation on different boundary regions, for instance, a segment in the right / left half-line, a metric star graph, and so on. See, for instance, our upcoming work \cite{CK2018-2} for the IBVP of the fifth-order KdV-type equations, whose nonlinearities do not allow the scaling argument in contrast to this paper. 
\end{remark}

\subsection{Organization of the paper} 
The rest of paper is organized as follows: In Section \ref{sec:pre}, we introduce some function spaces defined on the half line and construct the solution spaces. In Section \ref{sec:Duhamel boundary forcing operator}, we introduce the boundary forcing operator for the fifth-order KdV-type equation. In Sections \ref{sec:energy} and \ref{sec:bilinear}, we show the energy estimates and the bilinear estimates, respectively. In Sections \ref{sec:main proof 1} and \ref{sec:main proof 2}, we complete the proofs of Theorems \ref{theorem1} and \ref{theorem12}, respectively.

\section{Preliminaries}\label{sec:pre}
Let $\R^+ = (0,\infty)$. For positive real numbers $x,y \in \R^+$, we mean $x \lesssim y$ by $x \le Cy$ for some $C>0$. Also, $x \sim y$ means $x \lesssim y$ and $y\lesssim x$. Similarly, $\lesssim_s$ and $\sim_s$ can be defined, where the implicit constants depend on $s$. Let $a_1,a_2,a_3 \in \R$. The quantities $a_{max} \ge a_{med} \ge a_{min}$ can be conveniently defined to be the maximum, median and minimum values of $a_1,a_2,a_3$ respectively.

Throughout the paper, we fix a cut-off function (even function)
\begin{equation}\label{eq:cutoff}
\psi \in C_0^{\infty}(\mathbb{R}) \quad \mbox{such that} \quad 0 \le \psi \le1, \quad  \psi \equiv 1 \; \mbox{ on } \; [-1,1], \quad \psi \equiv 0, \; |t| \ge 2. 
\end{equation}

\subsection{Sobolev spaces on the half line}
For $s\geq 0$, we say $f \in H^s(\mathbb{R}^+)$ if there exists $F \in H^s(\R)$ such that $f(x)=F(x)$ for $x>0$, in this case we set $\|f\|_{H^s(\mathbb{R}^+)}=\inf_{F}\|F\|_{H^{s}(\mathbb{R})}$. For $s \in \R$, we say $f \in H_0^s(\mathbb{R}^+)$ if there exists $F \in H^s(\R)$ such that $F$ is the extension of $f$ on $\R$ and $F(x) = 0$ for $x<0$. In this case, we set $\norm{f}_{H^s_0(\R^+)} = \norm{F}_{H^s(\R)}$.
For $s<0$, we define $H^s(\mathbb{R}^+)$ as the dual space of $H_0^{-s}(\mathbb{R}^+)$.

We also set $C_0^{\infty}(\mathbb{R}^+)=\{f\in C^{\infty}(\mathbb{R});\, \supp f \subset [0,\infty)\}$, and define $C_{0,c}^{\infty}(\mathbb{R}^+)$ as the subset of $C_0^{\infty}(\mathbb{R}^+)$, whose members have a compact support on $(0,\infty)$. We remark that $C_{0,c}^{\infty}(\mathbb{R}^+)$ is dense in $H_0^s(\mathbb{R}^+)$ for all $s\in \mathbb{R}$.

We finish this subsection with stating elementary properties of the Sobolev spaces.

\begin{lemma}\label{sobolevh0}
	For $-\frac{1}{2}<s<\frac{1}{2}$ and $f\in H^s(\mathbb{R})$, we have
	\begin{equation}\label{eq:0}
	\|\chi_{(0,\infty)}f\|_{H^s(\mathbb{R})}\leq c \|f\|_{H^s(\mathbb{R})}.
	\end{equation}
\end{lemma}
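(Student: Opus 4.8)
The plan is to treat the cases $0 \le s < \frac12$ and $-\frac12 < s < 0$ separately, reducing the latter to the former by duality. The case $s = 0$ is immediate, since $|\chi_{(0,\infty)} f| \le |f|$ pointwise gives $\|\chi_{(0,\infty)}f\|_{L^2} \le \|f\|_{L^2}$. For $0 < s < \frac12$ I would work with the Gagliardo--Slobodeckij characterization of the homogeneous seminorm,
\[
[g]_{H^s(\R)}^2 := \int_\R \int_\R \frac{|g(x) - g(y)|^2}{|x-y|^{1+2s}} \, dx \, dy \sim \|g\|_{\dot H^s(\R)}^2,
\]
together with $\|g\|_{H^s}^2 \sim \|g\|_{L^2}^2 + [g]_{H^s}^2$. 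Since the $L^2$-part of $\chi_{(0,\infty)}f$ is controlled trivially, it suffices to bound $[\chi_{(0,\infty)}f]_{H^s}$. Writing $\chi = \chi_{(0,\infty)}$, I split the double integral according to the signs of $x$ and $y$: the region $\{x,y > 0\}$ contributes $\int_0^\infty\int_0^\infty |f(x)-f(y)|^2|x-y|^{-(1+2s)}$, dominated by $[f]_{H^s}^2$; the region $\{x,y<0\}$ contributes nothing; and by symmetry the two mixed regions are equal.

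The mixed region $\{x>0,\, y<0\}$ is where the work lies. There $\chi(x)f(x) - \chi(y)f(y) = f(x)$, and computing the inner integral
\[
\int_{-\infty}^0 \frac{dy}{(x-y)^{1+2s}} = \frac{1}{2s}\, x^{-2s}, \qquad x > 0,
\]
reduces the mixed contribution to a constant multiple of $\int_0^\infty |f(x)|^2 x^{-2s} \, dx$. Thus the whole estimate collapses to the fractional Hardy inequality
\[
\int_0^\infty \frac{|f(x)|^2}{x^{2s}} \, dx \le c\, \|f\|_{H^s(\R)}^2, \qquad 0 < s < \tfrac12,
\]
which I regard as the main obstacle. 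I would establish it either via the standard one-dimensional fractional Hardy inequality (bounding $\int_\R |f|^2 |x|^{-2s}$ by $[f]_{H^s}^2$, valid exactly for $s < \frac12$, the endpoint failing because $x^{-2s}$ ceases to be locally integrable near $0$), or, on the Fourier side, as an instance of the Stein--Weiss/Pitt inequality stating that multiplication by $|x|^{-s}$ maps $\dot H^s(\R)$ into $L^2(\R)$ for $0 < s < \frac12$.

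For $-\frac12 < s < 0$ I would invoke duality. Multiplication by the real function $\chi_{(0,\infty)}$ is formally self-adjoint with respect to the $L^2$ pairing, so the adjoint of $M_\chi : H^s \to H^s$ is $M_\chi : H^{-s} \to H^{-s}$. Since $0 < -s < \frac12$ falls into the already-treated range, the case just established, combined with $(H^s(\R))^* = H^{-s}(\R)$ (the density of $C^\infty_{0,c}$ making this pairing legitimate), yields the desired bound for negative $s$. Assembling the three cases then gives the claim on the full range $-\frac12 < s < \frac12$, and the breakdown of the Hardy inequality at $|s| = \frac12$ explains why the restriction is sharp.
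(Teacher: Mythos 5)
Your proof is correct, but it takes a genuinely different route from the paper's in the core range $0 \le s < \frac12$: there the paper simply quotes Proposition 3.5 of Jerison--Kenig \cite{JK1995}, whereas you prove the bound from scratch via the Gagliardo--Slobodeckij characterization of the $\dot H^s$ seminorm, the sign decomposition of the double integral, and the reduction of the mixed region $\{x>0,\,y<0\}$ to the fractional Hardy (Herbst) inequality $\int_0^\infty |f(x)|^2 x^{-2s}\,dx \lesssim \|f\|_{H^s(\R)}^2$, which indeed holds exactly for $0<s<\frac12$. For $-\frac12<s<0$ both arguments proceed by duality, but yours is the direct adjoint argument (multiplication by the real function $\chi_{(0,\infty)}$ is symmetric with respect to the $L^2$ pairing), while the paper's additionally routes through the half-line spaces, identifying $\|\chi_{(0,\infty)}f\|_{H^s(\R)} = \|\chi_{(0,\infty)}f\|_{H^s_0(\R^+)} \sim \|\chi_{(0,\infty)}f\|_{H^s(\R^+)}$ via Proposition 2.7 of \cite{CK}. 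The trade-off is clear: the paper's proof is two lines but rests entirely on cited results; yours is self-contained modulo the classical fractional Hardy inequality, and it makes transparent why the restriction $|s|<\frac12$ is sharp (the inner integral $\int_{-\infty}^0 (x-y)^{-1-2s}\,dy$ produces the weight $x^{-2s}$, whose failure to be handled at $s=\frac12$ is precisely the Hardy endpoint). One point worth making explicit in your write-up: for $s<0$ the product $\chi_{(0,\infty)}f$ is not a priori defined for a distribution $f\in H^s(\R)$, so the duality bound should first be established for Schwartz (or $L^2$) functions and the multiplication operator then extended by continuity; your parenthetical remark about density gestures at this but does not spell it out --- though the paper is equally informal on the same point.
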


\begin{proof}
The proof of \eqref{eq:0} for $0 \le s < 1/2$ immediately follows from Proposition 3.5 in \cite{JK1995}. For $-1/2 < s <0$, for $f \in H^s(\R)$, the duality of $H^s(\R)$ and \eqref{eq:0} for $0 < -s < \frac12$ yields $\chi_{(0,\infty)}f \in H^s(\R)$. Hence, from the definitions of $H^s(\R^+)$ and $H_0^s(\R^+)$, and Proposition 2.7 in \cite{CK}, we have
\[
\norm{\chi_{(0,\infty)}f}_{H^s(\R)} = \norm{\chi_{(0,\infty)}f}_{H^{s}_0(\R^+)} \sim \norm{\chi_{(0,\infty)}f}_{H^{s}(\R^+)} \lesssim \norm{f}_{H^s(\R)}.
\]
\end{proof}

\begin{lemma}\label{sobolev0}
	If $0\leq s<\frac{1}{2}$, then $\|\psi f\|_{H^s(\mathbb{R})}\leq c \|f\|_{\dot{H}^{s}(\mathbb{R})}$ and $\|\psi f\|_{\dot{H}^{-s}(\mathbb{R})}\leq c \|f\|_{H^{-s}(\mathbb{R})}$ , where the constant $c$ depends only on $s$ and $\psi$. 
\end{lemma}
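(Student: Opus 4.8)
The plan is to prove the first inequality directly and then deduce the second by duality, exploiting that $\dot H^{-s}$ and $H^{-s}$ are the duals of $\dot H^{s}$ and $H^{s}$ under the $L^2$ pairing, which is legitimate precisely because $|s|<\frac12$. The case $s=0$ is immediate since $\dot H^0=H^0=L^2$ and $0\le\psi\le1$, so I would focus on $0<s<\frac12$.

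For the first estimate I would first reduce the inhomogeneous norm to a homogeneous piece plus an $L^2$ piece: subadditivity of $t\mapsto t^s$ gives $\langle\xi\rangle^{2s}\le 1+|\xi|^{2s}$ for $0<s<1$, hence $\|\psi f\|_{H^s}^2\le\|\psi f\|_{L^2}^2+\|\psi f\|_{\dot H^s}^2$, and it suffices to bound each term by $\|f\|_{\dot H^s}$. Both use the Sobolev embedding $\dot H^s(\R)\hookrightarrow L^p(\R)$ with $p=\frac{2}{1-2s}$, valid exactly for $0<s<\frac12$. The $L^2$ term follows from H\"older and the compact support of $\psi$: $\|\psi f\|_{L^2}\le\|\psi\|_{L^{1/s}}\|f\|_{L^p}\lesssim\|f\|_{\dot H^s}$.

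For the homogeneous term I would invoke the Gagliardo characterization $\|u\|_{\dot H^s}^2\sim\iint\frac{|u(x)-u(y)|^2}{|x-y|^{1+2s}}\,dx\,dy$ (valid for $0<s<1$). Writing $\psi(x)f(x)-\psi(y)f(y)=\psi(x)(f(x)-f(y))+(\psi(x)-\psi(y))f(y)$ and using $(a+b)^2\le 2a^2+2b^2$, the first contribution is at most $\|\psi\|_\infty^2\|f\|_{\dot H^s}^2$. The second equals $2\int|f(y)|^2G(y)\,dy$ with $G(y):=\int\frac{|\psi(x)-\psi(y)|^2}{|x-y|^{1+2s}}\,dx$; splitting the $x$-integral into $|x-y|\le1$ (where $|\psi(x)-\psi(y)|\lesssim|x-y|$, integrable since $s<1$) and $|x-y|>1$ (integrable since $s>0$) shows $G$ is bounded, while the compact support of $\psi$ forces the decay $G(y)\lesssim\langle y\rangle^{-(1+2s)}$, so $G\in L^{1/(2s)}$. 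H\"older with exponents $\frac{1}{1-2s}$ and $\frac{1}{2s}$ together with the same Sobolev embedding then gives $\int|f|^2G\le\|f\|_{L^p}^2\|G\|_{L^{1/(2s)}}\lesssim\|f\|_{\dot H^s}^2$.

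Finally, for the second estimate I would dualize, using that $\psi$ is real-valued: $\|\psi f\|_{\dot H^{-s}}=\sup_{\|g\|_{\dot H^s}\le1}|\langle\psi f,g\rangle|=\sup_{\|g\|_{\dot H^s}\le1}|\langle f,\psi g\rangle|\le\|f\|_{H^{-s}}\sup_{\|g\|_{\dot H^s}\le1}\|\psi g\|_{H^s}$, and the first estimate bounds $\|\psi g\|_{H^s}\lesssim\|g\|_{\dot H^s}\le1$. I expect the main obstacle to be the homogeneous multiplier bound, specifically controlling the second Gagliardo term where the low-frequency behavior of $f$ interacts with the decay of $G$; the borderline value $s=\frac12$ enters there through both the Sobolev embedding and the integrability of $G$, which is exactly why the range is $0\le s<\frac12$.
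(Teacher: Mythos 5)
Your proof is correct, but it follows a genuinely different route from the paper's. The paper works entirely on the Fourier side: writing $\widehat{\psi f}=\widehat{\psi}*\widehat{f}$, it applies the Cauchy--Schwarz inequality together with the rapid decay of $\widehat{\psi}$, and the restriction $s<\frac12$ enters through the local integrability $\int_0^1 x^{\alpha}\,dx<\infty$ for $\alpha=-2s>-1$; the two inequalities are handled by essentially the same convolution estimate. You instead argue on the physical side: you split $\|\psi f\|_{H^s}$ into an $L^2$ piece and a homogeneous piece, control both through the Sobolev embedding $\dot H^s(\R)\hookrightarrow L^{2/(1-2s)}(\R)$, handle the homogeneous piece with the Gagliardo seminorm and the Leibniz-type splitting $\psi(x)f(x)-\psi(y)f(y)=\psi(x)(f(x)-f(y))+(\psi(x)-\psi(y))f(y)$, and then obtain the negative-index inequality by duality between $\dot H^{\pm s}$ and $H^{\mp s}$ (valid since $s<\frac12$). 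Your computations check out: the Hölder exponents $\frac1s$, $\frac{2}{1-2s}$ and $\frac{1}{1-2s}$, $\frac{1}{2s}$ are consistent, the kernel $G(y)=\int|\psi(x)-\psi(y)|^2|x-y|^{-1-2s}dx$ is indeed bounded with decay $\langle y\rangle^{-(1+2s)}$, hence in $L^{1/(2s)}$, and the duality step is clean because $\psi$ is real and $C^\infty_0$ functions are dense. What the paper's approach buys is brevity and a single mechanism covering both estimates; what yours buys is a self-contained real-variable argument that makes visible, through the embedding and the integrability of $G$, exactly why the endpoint $s=\frac12$ fails, and it dispatches the second inequality with no additional work via duality.
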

Remark that Lemma \ref{sobolev0} is equivalent that $\|f\|_{H^s(\mathbb{R})} \sim \|f\|_{\dot{H}^{s}(\mathbb{R})}$ for $-\frac12 < s < \frac12$ where $f \in H^s$ with $\supp f \subset [0,1]$.
\begin{proof}
The proof follows from the Cauchy-Schwarz inequality and the smoothness of $\psi$. We remark that the regularity range follows from the fact
\[\int_0^1 \frac{x^{\alpha}}{dx} < \infty \quad \mbox{for} \quad -1 < \alpha.\]
\end{proof}

\begin{lemma}[Proposition 2.4 in \cite{CK}]\label{alta}
	If $\frac{1}{2}<s<\frac{3}{2}$ the following statements are valid:
	\begin{enumerate}
\item [(a)] $H_0^s(\R^+)=\big\{f\in H^s(\R^+);f(0)=0\big\},$\medskip
\item [(b)] If  $f\in H^s(\R^+)$ with $f(0)=0$, then $\|\chi_{(0,\infty)}f\|_{H_0^s(\R^+)}\leq c \|f\|_{H^s(\R^+)}$.
	\end{enumerate}
\end{lemma}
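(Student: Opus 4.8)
The plan is to prove the characterization
\[
H_0^s(\R^+)=\set{f\in H^s(\R^+);\ f(0)=0}
\]
for $\frac12<s<\frac32$ by proving the two inclusions, and to obtain part (b) as a refinement of the ``$\supseteq$'' direction. Since $s>\frac12$, the trace map $f\mapsto f(0)$ is well-defined and continuous on $H^s(\R^+)$ by the Sobolev embedding, so the set on the right-hand side is a well-defined closed subspace; this is the standing fact I would invoke throughout. For the inclusion $H_0^s(\R^+)\subseteq\set{f\in H^s(\R^+);f(0)=0}$, I would take $f\in H_0^s(\R^+)$, so its zero-extension $F$ lies in $H^s(\R)$ with $F\equiv 0$ on $(-\infty,0)$. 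Because $s>\frac12$, $F$ has a continuous representative, and evaluating the one-sided limit from the left forces $F(0)=0$ by continuity; hence $f(0)=F(0)=0$. This direction is essentially just continuity of the trace plus the vanishing of the extension on the left half-line.

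The substantive direction is $\set{f\in H^s(\R^+);f(0)=0}\subseteq H_0^s(\R^+)$, and here part (b) does the real work: I would show that for $f\in H^s(\R^+)$ with $f(0)=0$, the truncation $\chi_{(0,\infty)}f$ (viewed via any $H^s(\R)$-extension and then cut off, or equivalently the zero-extension of $f$) lies in $H^s(\R)$ with the quantitative bound $\norm{\chi_{(0,\infty)}f}_{H_0^s(\R^+)}\le c\norm{f}_{H^s(\R^+)}$. Once $\chi_{(0,\infty)}f\in H^s(\R)$ is established, its support in $[0,\infty)$ immediately exhibits $f$ as an element of $H_0^s(\R^+)$, closing the inclusion and simultaneously giving (b). The key step is therefore the boundedness of the multiplication operator $f\mapsto\chi_{(0,\infty)}f$ on the subspace $\set{f(0)=0}$ for $s$ in the supercritical range $(\frac12,\frac32)$, where naive multiplication by a characteristic function is not bounded on $H^s(\R)$ (this is exactly why Lemma \ref{sobolevh0} was restricted to $|s|<\frac12$).

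To prove this boundedness I would reduce matters to the subcritical lemma already available. Writing $s=1+\sigma$ with $-\frac12<\sigma<\frac12$, I would pass to the derivative: for $f$ with $f(0)=0$ one has $\px(\chi_{(0,\infty)}f)=\chi_{(0,\infty)}(\px f)$ as a distribution, with no Dirac mass at the origin precisely because the boundary value $f(0)$ vanishes. Then $\chi_{(0,\infty)}(\px f)\in H^{\sigma}(\R)$ is controlled by $\norm{\px f}_{H^\sigma(\R)}\sim\norm{f}_{H^{s}(\R)}$ via Lemma \ref{sobolevh0} applied at the subcritical regularity $\sigma\in(-\frac12,\frac12)$. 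Combining the $L^2$ control of $\chi_{(0,\infty)}f$ itself with this $H^\sigma$ control of its derivative yields the desired $H^s(\R)$ bound, after taking an infimum over extensions $F$ of $f$ normalized so that $F(0)=0$ (which is harmless since $f(0)=0$). The main obstacle is exactly this passage across the threshold $s=\frac12$: one must use the hypothesis $f(0)=0$ in an essential way to kill the boundary contribution that otherwise obstructs $H^s$-boundedness of the cutoff, and one must be careful that differentiation commutes with the truncation only because that boundary term vanishes. The remaining estimates are routine consequences of Lemma \ref{sobolevh0} and the interpolation/duality structure of the $H^s$ scale.
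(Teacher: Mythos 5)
Your proposal is correct, but there is nothing in the paper to compare it against: the paper does not prove Lemma \ref{alta} at all, it simply quotes it as Proposition 2.4 of \cite{CK} (where the classical argument runs through a Hardy-inequality characterization of $H^s_0$, in the spirit of Lions--Magenes). Your argument is a legitimate self-contained alternative, and it is arguably the natural one given what this paper already has on hand: the easy inclusion follows from continuity of the trace for $s>\tfrac12$, and the substantive inclusion together with (b) is obtained by differentiating once, writing $s=1+\sigma$ with $\sigma\in(-\tfrac12,\tfrac12)$, using the jump formula $\partial_x(\chi_{(0,\infty)}F)=\chi_{(0,\infty)}F'+F(0)\delta_0$ with the delta killed by $f(0)=0$, invoking Lemma \ref{sobolevh0} at the subcritical regularity $\sigma$, and concluding via the elementary bound $\|u\|_{H^s(\R)}\lesssim \|u\|_{L^2(\R)}+\|\partial_x u\|_{H^{s-1}(\R)}$. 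Two details deserve to be spelled out if this were written in full. First, when $\tfrac12<s<1$ the derivative $F'$ lies in $H^{s-1}(\R)$ with negative index, so $\chi_{(0,\infty)}F'$ must be interpreted through the duality extension of the cutoff operator furnished by Lemma \ref{sobolevh0}, and the identity $\partial_x(\chi_{(0,\infty)}F)=\chi_{(0,\infty)}F'$ should be justified by approximating $F$ in $H^s$ by smooth functions vanishing at the origin (e.g.\ $G_n=F_n-F_n(0)\phi$ with $\phi(0)=1$) and passing to the limit on both sides; you flag this point but do not carry it out. Second, your normalization ``$F(0)=0$'' is automatic rather than a choice: every $H^s(\R)$-extension of $f$ is continuous and agrees with $f$ on $(0,\infty)$, hence takes the value $f(0)=0$ at the origin, so the infimum over extensions is unrestricted. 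With these details filled in, your proof is complete and, unlike the citation route, uses only tools already established in the paper.
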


\begin{lemma}[Proposition 2.5. in \cite{CK}]\label{cut}
	Let $-\infty<s<\infty$ and $f\in  H_0^s(\mathbb{R}^+)$. For the cut-off function $\psi$ defined in \eqref{eq:cutoff}, we have $ \|\psi f\|_{H_0^s(\mathbb{R}^+)}\leq c \|f\|_{H_0^s(\mathbb{R}^+)}$.
\end{lemma}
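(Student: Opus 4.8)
The plan is to reduce the claim to a multiplier bound on the full line and then prove that bound by an elementary Fourier computation valid for all $s$ at once. By the definition of $H_0^s(\R^+)$, write $F$ for the extension of $f$ by zero to $\R$, so that $F \in H^s(\R)$, $\supp F \subset [0,\infty)$, and $\norm{f}_{H^s_0(\R^+)} = \norm{F}_{H^s(\R)}$. Since $\psi$ is smooth, $\supp(\psi F) \subset \supp F \subset [0,\infty)$, so $\psi F$ is itself the zero-extension of $\psi f$; consequently $\psi f \in H_0^s(\R^+)$ with $\norm{\psi f}_{H^s_0(\R^+)} = \norm{\psi F}_{H^s(\R)}$. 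Thus it suffices to establish the multiplier estimate $\norm{\psi F}_{H^s(\R)} \le c\,\norm{F}_{H^s(\R)}$ for every $s \in \R$ and every $F \in H^s(\R)$, with $c = c(s,\psi)$.

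For the multiplier estimate I would argue on the Fourier side. Writing $\wh{\psi F} = \wh\psi * \wh F$ and inserting the weight gives
\[\bra{\xi}^s \wh{\psi F}(\xi) = \int \frac{\bra{\xi}^s}{\bra{\eta}^s}\, \wh\psi(\xi - \eta)\, \bra{\eta}^s \wh F(\eta)\, d\eta.\]
Peetre's inequality $\bra{\xi}^s \le 2^{|s|/2}\bra{\eta}^s \bra{\xi-\eta}^{|s|}$ bounds the ratio $\bra{\xi}^s/\bra{\eta}^s$ by $2^{|s|/2}\bra{\xi - \eta}^{|s|}$, so the right-hand side is dominated by the convolution of $\bra{\cdot}^{|s|}|\wh\psi|$ with $\bra{\cdot}^s|\wh F|$. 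Young's inequality ($L^1 * L^2 \hookrightarrow L^2$) then yields
\[\norm{\psi F}_{H^s(\R)} \le 2^{|s|/2}\, \norm{\bra{\cdot}^{|s|}\wh\psi}_{L^1(\R)}\, \norm{F}_{H^s(\R)}.\]
Because $\psi \in C_0^\infty(\R)$ is Schwartz, its transform $\wh\psi$ is Schwartz and $\bra{\cdot}^{|s|}\wh\psi \in L^1(\R)$ for every $s$, so the constant $c(s,\psi) = 2^{|s|/2}\norm{\bra{\cdot}^{|s|}\wh\psi}_{L^1}$ is finite. This closes the argument.

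The merit of this route is its uniformity in $s$: the same computation covers positive, negative, and non-integer $s$ simultaneously, so I avoid splitting into the integer case (Leibniz rule), the fractional case (interpolation), and the negative case (duality). The only point requiring a little care is the support step, namely verifying that multiplication by $\psi$ keeps the distribution supported in $[0,\infty)$ so that one lands back in $H_0^s(\R^+)$ rather than merely in $H^s(\R)$; but this is immediate from $\supp(\psi F)\subset\supp F$. I do not expect a serious analytic obstacle: the entire content is that multiplication by a fixed Schwartz function is bounded on every $H^s(\R)$, and the Peetre–Young combination is the cleanest way to see it.
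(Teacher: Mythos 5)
Your proof is correct. Note that the paper itself does not prove this lemma at all: it is quoted as Proposition 2.5 of Colliander--Kenig \cite{CK}, so the comparison here is between your self-contained argument and an external citation. Your route is the standard and arguably cleanest one: reduce to the full-line statement via the observation that multiplication by the smooth $\psi$ preserves supports, and then prove that multiplication by a fixed Schwartz function is bounded on $H^s(\R)$ for every $s$ simultaneously by writing $\wh{\psi F}=c\,\wh\psi*\wh F$, applying Peetre's inequality $\bra{\xi}^s\le 2^{|s|/2}\bra{\eta}^s\bra{\xi-\eta}^{|s|}$, and closing with Young's inequality $L^1*L^2\hookrightarrow L^2$; the constant $2^{|s|/2}\|\bra{\cdot}^{|s|}\wh\psi\|_{L^1}$ is finite because $\wh\psi$ is Schwartz. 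This avoids the case-splitting (integer Leibniz rule, interpolation, duality) that one might otherwise use, and it is uniform in $s$, which matters since the lemma is invoked in the paper for arbitrary real $s$. One small point of care: for $s<-\frac12$ the restriction of a distribution to the open half-line loses information (e.g.\ $\delta_0$ restricts to zero), so the phrase ``$\psi F$ is the zero-extension of $\psi f$'' is best replaced by the statement you actually use, namely that the map $F\mapsto\psi F$ sends the subspace of $H^s(\R)$ distributions supported in $[0,\infty)$ into itself with the norm bound above; with the paper's convention $\|f\|_{H_0^s(\R^+)}=\|F\|_{H^s(\R)}$ this is precisely the claim, and your argument establishes it.
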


\subsection{Solution spaces}\label{sec:sol space}
For $f \in \Sch '(\R^2) $ we denote by $\wt{f}$ or $\ft (f)$ the Fourier transform of $f$ with respect to both spatial and time variables
\[\wt{f}(\tau , \xi)=\int _{\R ^2} e^{-ix\xi}e^{-it\tau}f(t,x) \;dxdt .\]
Moreover, we use $\ft_x$ and $\ft_t$ to denote the Fourier transform with respect to space and time variable respectively (also use $\wh{\;}$ for both cases). 

For $s,b\in \mathbb{R}$, we introduce the classical Bourgain spaces $X^{s,b}$ associated to \eqref{eq:5kdv} as the completion of $\Sch'(\mathbb{R}^2)$ under the norm
\[\norm{f}_{X^{s,b}}^2 = \int_{\R^2} \bra{\xi}^{2s}\bra{\tau - \xi^5}^{2b}|\wt{f}(\tau,\xi)|^2 \; d\xi d\tau, \]
where $\bra{\cdot} = (1+|\cdot|^2)^{1/2}$. The Fourier restriction norm method was first implemented in its current form by Bourgain \cite{Bourgain1993} and further developed by Kenig, Ponce and Vega \cite{KPV1996} and Tao \cite{Tao2001}. The following is one of the basic properties of $X^{s,b}$ estimates:
\begin{lemma}[Lemma 2.11 in \cite{Tao2006}]\label{lem:Xsb}
Let $\psi(t)$ be a Schwartz function in time. Then, we have 
\[\norm{\psi(t)f}_{X^{s,b}} \lesssim_{\psi,b} \norm{f}_{X^{s,b}}.\]
\end{lemma}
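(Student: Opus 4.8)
The plan is to work entirely on the space-time Fourier side and exploit that multiplication by $\psi(t)$ touches only the time variable. Since $\psi$ depends on $t$ alone, the space-time Fourier transform converts multiplication by $\psi$ into convolution by $\wh\psi$ in the $\tau$-variable, leaving $\xi$ untouched:
\[
\wt{\psi f}(\tau,\xi) \;=\; \frac{1}{2\pi}\int_\R \wh\psi(\tau-\tau')\,\wt f(\tau',\xi)\,d\tau'.
\]
Consequently the spatial weight $\bra{\xi}^{s}$ in the $X^{s,b}$ norm is inert: it factors outside the $\tau$-integral, so it suffices to establish, for each fixed $\xi$ and uniformly in $\xi$, the one-dimensional weighted bound
\[
\normo{\bra{\tau-\xi^5}^{b}\,\wt{\psi f}(\cdot,\xi)}_{L^2_\tau} \;\les_{\psi,b}\; \normo{\bra{\tau-\xi^5}^{b}\,\wt f(\cdot,\xi)}_{L^2_\tau},
\]
after which integrating in $\xi$ against $\bra{\xi}^{2s}$ gives the claim. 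Equivalently, this is the statement that multiplication by $\psi$ is bounded on $H^{b}$, transplanted to the modulation variable $\tau-\xi^5$.

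First I would move the weight across the convolution using Peetre's inequality, $\bra{\tau-\xi^5}^{b} \les_b \bra{\tau-\tau'}^{|b|}\,\bra{\tau'-\xi^5}^{b}$, valid for every real $b$ via the splitting $\tau-\xi^5=(\tau-\tau')+(\tau'-\xi^5)$. Inserting this into the convolution formula and writing $G(\tau',\xi)=\bra{\tau'-\xi^5}^{b}|\wt f(\tau',\xi)|$ yields the pointwise bound
\[
\bra{\tau-\xi^5}^{b}\,|\wt{\psi f}(\tau,\xi)| \;\les_b\; \int_\R \bra{\tau-\tau'}^{|b|}\,|\wh\psi(\tau-\tau')|\;G(\tau',\xi)\,d\tau'.
\]
The right-hand side is a convolution in $\tau$ of the fixed kernel $K(\cdot)=\bra{\cdot}^{|b|}|\wh\psi(\cdot)|$ with $G(\cdot,\xi)$. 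Since $\psi$ is Schwartz, $\wh\psi$ is rapidly decaying, so $K\in L^1(\R)$ with $\normo{K}_{L^1}=c_{\psi,b}<\infty$; this is exactly where the smoothness and decay of $\psi$ enter. Young's inequality ($L^1 * L^2 \hookrightarrow L^2$) then gives $\normo{K * G(\cdot,\xi)}_{L^2_\tau}\le c_{\psi,b}\,\normo{G(\cdot,\xi)}_{L^2_\tau}$ uniformly in $\xi$, which is precisely the one-dimensional bound above. Squaring, multiplying by $\bra{\xi}^{2s}$, and integrating in $\xi$ completes the proof.

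The only genuinely delicate point is that $b$ is an arbitrary real number — in this paper the regime $b<\frac12$, and in the dual estimates negative values of $b$ as well — so one cannot rely on a product or algebra estimate valid only for $b\ge0$. The absolute value $|b|$ in Peetre's inequality is what absorbs this: it guarantees that the growth factor $\bra{\tau-\tau'}^{|b|}$ transferred onto the kernel is a positive power regardless of the sign of $b$, and the rapid decay of $\wh\psi$ keeps $K$ integrable for every fixed $b$. Everything else — the separation of the $\xi$ weight and the uniformity in $\xi$ — is automatic because the convolution acts only in $\tau$.
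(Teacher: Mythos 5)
Your proof is correct and complete: the reduction to a one-dimensional convolution estimate in $\tau$, Peetre's inequality with the exponent $|b|$ (which handles both signs of $b$), and Young's inequality with the integrable kernel $\bra{\cdot}^{|b|}|\wh\psi(\cdot)|$ together give exactly the claimed bound. The paper itself offers no proof — it simply cites Lemma 2.11 of Tao's book — and your argument is essentially the standard proof given there (Tao phrases it as averaging over modulations $e^{it\tau_0}u$ and using Minkowski's inequality, which is the physical-side twin of your Fourier-side convolution-plus-Young argument).
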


As well-known, the $X^{s,b}$ space with $b> \frac12$ is well-adapted to study the IVP of dispersive equations. However, in the study of the IBVP, the standard argument cannot be applied directly due to the following two reasons: First, the control of (derivatives) time trace norms of the Duhamel parts requires us to introduce modified $X^{s,b}$-type spaces, since the full regularity range cannot be covered (see Lemma \ref{duhamel} (b)). In order to overcome this weakness, we define the (time-adapted) Bourgain space $Y^{s,b}$ associated to \eqref{eq:5kdv} as the completion of $S'(\mathbb{R}^2)$ under the norm
\[\norm{f}_{Y^{s,b}}^2 = \int_{\R^2} \bra{\tau}^{\frac{2s}{5}}\bra{\tau - \xi^5}^{2b}|\wt{f}(\tau,\xi)|^2 \; d\xi d\tau.\]
Second, the Duhamel boundary forcing operator in the study of the IBVP requires us to take the exponent $b$ of the standard $X^{s,b}$ space in the range $(0,\frac12)$. It forces us to use the low frequency localized $X^{0,b}$-type space with $b > \frac12$ in the nonlinear estimates. Hence, we define $D^{\alpha}$ space as the completion of $\Sch'(\mathbb{R}^2)$ under the norm
\[\norm{f}_{D^{\alpha}}^2 = \int_{\R^2} \bra{\tau}^{2\alpha}\mathbf{1}_{\set{\xi : |\xi| \le 1}}(\xi)|\wt{f}(\tau,\xi)|^2 \; d\xi d\tau,\]
where $\mathbf{1}_A$ is the characteristic functions on a set $A$. 

Let $\Z_+ = \Z \cap [0,\infty)$. For $k \in \Z_+$, we set
\[I_0 = \set{\xi \in \R : |\xi| \le 2} \hspace{1em} \mbox{ and } \hspace{1em} I_k = \set{\xi \in \R : 2^{k-1} \le |\xi| \le 2^{k+1}}, \hspace{1em} k \ge 1.\]
Let $\eta_0: \R \to [0,1]$ denote a smooth bump function supported in $ [-2,2]$ and equal to $1$ in $[-1,1]$. For $k \in \Z_+ $, let 
\[\chi_0(\xi) = \eta_0(\xi), \hspace{1em} \mbox{and} \hspace{1em} \chi_k(\xi) = \eta_0(\xi/2^k) - \eta_0(\xi/2^{k-1}), \hspace{1em} k \ge 1,\]
which is supported in $I_k$, and
\[\chi_{[k_1,k_2]}=\sum_{k=k_1}^{k_2} \chi_k \quad \mbox{ for any} \ k_1 \le k_2 \in \Z_+ .\]
$\{ \chi_k \}_{k \in \Z_+}$ is the inhomogeneous decomposition function sequence to the frequency space. For $k\in \Z_+$, let $P_k$ denote the
operators on $L^2(\R)$ defined by $\widehat{P_kv}(\xi)=\chi_k(\xi)\wh{v}(\xi)$. For $l\in \Z_+$, let
\[P_{\le l}=\sum_{k \le l}P_k, \quad P_{\ge l}=\sum_{k \ge l}P_k.\]
For the modulation decomposition, we use the multiplier $\eta_j$, but the same as $\eta_j(\tau-\xi^5) = \chi_j(\tau-\xi^5)$. For $k,j \in \Z_+$, let
\[D_{k,j}=\{(\tau,\xi) \in \R^2 : \tau - \xi^5 \in I_j, \xi \in I_k \}, \hspace{2em} D_{k,\le j}=\cup_{l\le j}D_{k,l}.\]

Note that the Littlewood-Paley theory allows that
\begin{equation}\label{eq:dyadic X}
\norm{f}_{X^{s,b}}^2 \sim \sum_{k\ge0}\sum_{j\ge0}2^{2sk}2^{2bj}\norm{\eta_j(\tau-\xi^5)\chi_k(\xi)\wt{f}(\tau,\xi)}_{L^2}^2
\end{equation}
and
\[\norm{f}_{D^{\alpha}}^2 \sim \norm{P_0f}_{X^{0,\alpha}}^2.\]

Now we set the solution space denoted by $Z_{\ell}^{s,b,\alpha}$ with the following norm\footnote{The $Y^{s,b}$ space is nothing but the intermediate norm in the Picard iteration argument (see Lemma \ref{duhamel} (b) and Section \ref{sec:bilinear}).}:
\[\norm{f}_{Z_{\ell}^{s,b,\alpha}(\R^2) }= \sup_{t \in \R} \norm{f(t,\cdot)}_{H^s} + \sum_{j=0}^{\ell}\sup_{x \in \R} \norm{\px^jf(\cdot,x)}_{H^{\frac{s+2-j}{5}}} + \norm{f}_{X^{s,b} \cap D^{\alpha}},\]
for some $\ell \in \Z_+$. Remark that $Z_{1}^{s,b,\alpha}$ and  $Z_{2}^{s,b,\alpha}$ will be used for the right-half line and the left-half line problems, respectively (see Sections \ref{sec:main proof 1} and \ref{sec:main proof 2}). The spatial and time restricted space of $Z_{\ell}^{s,b,\alpha}(\R^2)$ is defined by the standard way:
\[Z_{\ell}^{s,b,\alpha}((0 , T)\times \R^+) = Z_{\ell}^{s,b,\alpha} \Big|_{(0 , T)\times \R^+}\]
equipped with the norm
\[\norm{f}_{Z_{\ell}^{s,b,\alpha}((0 , T)\times \R^+) } = \inf\limits_{g \in Z_{\ell}^{s,b,\alpha}} \set{\norm{g}_{Z_{\ell}^{s,b,\alpha}} : g(t,x) = f(t,x) \; \mbox{ on } \; (0,T) \times \R^+}. \]

\subsection{Riemann-Liouville fractional integral}
In this section, we just give a summary of the Riemann-Liouville fractional integral operator, see \cite{CK, Holmerkdv} for more details. Let $t_+$ be a function defined by
\[t_+ = t \quad \mbox{if} \quad t > 0, \qquad t_+ = 0  \quad \mbox{if} \quad t \le 0.\]
We also define $t_- = (-t)_+$. The tempered distribution $\frac{t_+^{\alpha-1}}{\Gamma(\alpha)}$ is defined as a locally integrable function for Re $\alpha>0$ by
\begin{equation*}
	\left \langle \frac{t_+^{\alpha-1}}{\Gamma(\alpha)},\ f \right \rangle=\frac{1}{\Gamma(\alpha)}\int_0^{\infty} t^{\alpha-1}f(t)dt.
\end{equation*}
It follows that
\begin{equation*}
	\frac{t_+^{\alpha-1}}{\Gamma(\alpha)}=\partial_t^k\left( \frac{t_+^{\alpha+k-1}}{\Gamma(\alpha+k)}\right),
\end{equation*}
for all $k\in\mathbb{N}$. This expression can be used to extend the definition of $\frac{t_+^{\alpha-1}}{\Gamma(\alpha)}$ to all $\alpha \in \mathbb{C}$ in the sense of distributions. A change of contour  calculation shows the Fourier transform of $\frac{t_+^{\alpha-1}}{\Gamma(\alpha)}$ as follows:
\begin{equation}\label{transformada}
	\left(\frac{t_+^{\alpha-1}}{\Gamma(\alpha)}\right)^{\widehat{}}(\tau)=e^{-\frac{1}{2}\pi i \alpha}(\tau-i0)^{-\alpha},
\end{equation}
where $(\tau-i0)^{-\alpha}$ is the distributional limit. One can also rewrite \eqref{transformada} as follows for $\alpha \notin \Z$: 
\begin{equation}\label{transformada1}
	\left(\frac{t_+^{\alpha-1}}{\Gamma(\alpha)}\right)^{\widehat{}}(\tau)=e^{-\frac12 \alpha \pi i}|\tau|^{-\alpha}\chi_{(0,\infty)}+e^{\frac12 \alpha \pi i}|\tau|^{-\alpha}\chi_{(-\infty,0)}.
\end{equation}
Note from \eqref{transformada} and \eqref{transformada1} that
\begin{equation}\label{transformada2}
(\tau-i0)^{-\alpha} = |\tau|^{-\alpha}\chi_{(0,\infty)}+e^{\alpha \pi i}|\tau|^{-\alpha}\chi_{(-\infty,0)}.
\end{equation}
For $f\in C_0^{\infty}(\mathbb{R}^+)$, we define
\begin{equation*}
	\mathcal{I}_{\alpha}f=\frac{t_+^{\alpha-1}}{\Gamma(\alpha)}*f.
\end{equation*}
Thus, for $\mbox{Re }\alpha>0$, we have
\begin{equation}\label{eq:IO}
	\mathcal{I}_{\alpha}f(t)=\frac{1}{\Gamma(\alpha)}\int_0^t(t-s)^{\alpha-1}f(s) \; ds,
\end{equation}
and basic properties $\mathcal{I}_0f=f$, $\mathcal{I}_1f(t)=\int_0^tf(s) \; ds$, $\mathcal{I}_{-1}f=f'$ and $\mathcal{I}_{\alpha}\mathcal{I}_{\beta}=\mathcal{I}_{\alpha+\beta}$.
\begin{lemma}[Lemma 2.1 in \cite{Holmerkdv}]
	If $f\in C_0^{\infty}(\mathbb{R}^+)$, then $\mathcal{I}_{\alpha}f\in C_0^{\infty}(\mathbb{R}^+)$, for all $\alpha \in \mathbb{C}$.
\end{lemma}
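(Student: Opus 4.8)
The plan is to establish the claim first in the convergent regime $\mathrm{Re}\,\alpha>0$, where the explicit formula \eqref{eq:IO} is available, and then to bootstrap to arbitrary $\alpha\in\mathbb{C}$ via the distributional derivative identity recorded just above the statement. I would begin with the support claim, which is the easier half. For $\mathrm{Re}\,\alpha>0$, formula \eqref{eq:IO} expresses $\mathcal{I}_{\alpha}f(t)$ as an integral of $(t-s)^{\alpha-1}f(s)$ over $s\in(0,t)$; since $f$ is supported in $[0,\infty)$, the integrand can be nonzero only when $0\le s<t$, which is impossible once $t\le 0$. Hence $\mathcal{I}_{\alpha}f(t)=0$ for $t\le 0$, i.e. $\supp \mathcal{I}_{\alpha}f\subset[0,\infty)$.

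For smoothness in this base case I would change variables $u=t-s$ to rewrite
\[
\mathcal{I}_{\alpha}f(t)=\frac{1}{\Gamma(\alpha)}\int_0^{\infty}u^{\alpha-1}f(t-u)\,du,
\]
where extending the integral to $(0,\infty)$ is legitimate because $f(t-u)=0$ whenever $u>t$. The point of this form is that the entire $t$-dependence now sits inside the smooth factor $f(t-u)$, so that differentiation in $t$ produces no boundary contribution and one simply differentiates under the integral sign, obtaining $\pt^n\mathcal{I}_{\alpha}f=\mathcal{I}_{\alpha}f^{(n)}$ for every $n$. Since each $f^{(n)}\in C_0^{\infty}(\R^+)$, iterating shows $\mathcal{I}_{\alpha}f\in C^{\infty}(\R)$, and together with the support claim this gives $\mathcal{I}_{\alpha}f\in C_0^{\infty}(\R^+)$ when $\mathrm{Re}\,\alpha>0$.

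Finally, for arbitrary $\alpha\in\mathbb{C}$ I would fix $k\in\mathbb{N}$ with $\mathrm{Re}(\alpha+k)>0$ and invoke the identity $\tfrac{t_+^{\alpha-1}}{\Gamma(\alpha)}=\pt^k\big(\tfrac{t_+^{\alpha+k-1}}{\Gamma(\alpha+k)}\big)$ from the excerpt, together with the fact that convolution commutes with differentiation, to write $\mathcal{I}_{\alpha}f=\pt^k\mathcal{I}_{\alpha+k}f$. By the base case $\mathcal{I}_{\alpha+k}f\in C_0^{\infty}(\R^+)$, and applying $\pt^k$ preserves both smoothness and support in $[0,\infty)$, which closes the argument. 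The one delicate point — and the step I expect to require the most care — is the justification of differentiation under the integral sign near the singularity at $u=0$: this is exactly where the hypothesis $\mathrm{Re}\,\alpha>0$ enters, since $u^{\mathrm{Re}\,\alpha-1}$ must be integrable there, and one needs an integrable dominating function valid uniformly for $t$ in bounded intervals, supplied by the smoothness of $f$ and the effective truncation $u\le t$ coming from its support.
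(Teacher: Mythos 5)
Your proposal is correct, and it is essentially the standard argument: the paper itself gives no proof of this lemma (it is quoted directly as Lemma 2.1 of \cite{Holmerkdv}), and the proof there proceeds exactly as you do — the case $\mathrm{Re}\,\alpha>0$ via the integral formula \eqref{eq:IO} with differentiation under the integral sign, followed by the extension to all $\alpha\in\mathbb{C}$ through the identity $\frac{t_+^{\alpha-1}}{\Gamma(\alpha)}=\partial_t^k\bigl(\frac{t_+^{\alpha+k-1}}{\Gamma(\alpha+k)}\bigr)$ and the fact that convolution commutes with $\partial_t^k$, i.e. $\mathcal{I}_{\alpha}f=\partial_t^k\mathcal{I}_{\alpha+k}f$. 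Your attention to the dominating function near $u=0$ and to the support argument is exactly the right care, so nothing is missing.
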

\begin{lemma}[Lemma 5.3 in \cite{Holmerkdv}]\label{lio}
	If $0\leq \mathrm{Re} \ \alpha <\infty$ and $s\in \mathbb{R}$, then $\|\mathcal{I}_{-\alpha}h\|_{H_0^s(\mathbb{R}^+)}\leq c \|h\|_{H_0^{s+\alpha}(\mathbb{R}^+)}$, where $c=c(\alpha)$.
\end{lemma}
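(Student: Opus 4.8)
The plan is to reduce the estimate to a Fourier multiplier bound in the time variable and then transfer it back to the $H_0^s$ scale via support considerations. Since $C_{0,c}^{\infty}(\R^+)$ is dense in $H_0^{s+\alpha}(\R^+)$ for every real index, I would first establish the inequality only for $h\in C_{0,c}^{\infty}(\R^+)$ and conclude by density. For such $h$, the preceding regularity lemma (Lemma 2.1 in \cite{Holmerkdv}) guarantees $\mathcal{I}_{-\alpha}h\in C_0^{\infty}(\R^+)$; in particular $\mathcal{I}_{-\alpha}h$ is supported in $[0,\infty)$, so by the very definition of the $H_0^s$ norm one has $\|\mathcal{I}_{-\alpha}h\|_{H_0^s(\R^+)}=\|\mathcal{I}_{-\alpha}h\|_{H^s(\R)}$. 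Thus the half-line statement is reduced to the full-line estimate $\|\mathcal{I}_{-\alpha}h\|_{H^s(\R)}\le c\,\|h\|_{H^{s+\mathrm{Re}\,\alpha}(\R)}$, which for real $\alpha$ is exactly the asserted bound.

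Next I would pass to the time-Fourier side. Writing $\mathcal{I}_{-\alpha}h=\frac{t_+^{-\alpha-1}}{\Gamma(-\alpha)}*h$ and applying \eqref{transformada} with $\alpha$ replaced by $-\alpha$, the convolution turns into the multiplication
\[
\widehat{\mathcal{I}_{-\alpha}h}(\tau)=e^{\frac12\pi i\alpha}\,(\tau-i0)^{\alpha}\,\wh{h}(\tau).
\]
The crux is the pointwise control of the symbol $m(\tau):=e^{\frac12\pi i\alpha}(\tau-i0)^{\alpha}$. Using the explicit decomposition from \eqref{transformada2} (again with $-\alpha$ in place of $\alpha$), namely $(\tau-i0)^{\alpha}=|\tau|^{\alpha}\chi_{(0,\infty)}+e^{-\alpha\pi i}|\tau|^{\alpha}\chi_{(-\infty,0)}$, and noting that $\bigl||\tau|^{\alpha}\bigr|=|\tau|^{\mathrm{Re}\,\alpha}$ while the phases $e^{\frac12\pi i\alpha}$ and $e^{-\alpha\pi i}$ contribute only the constant factors $e^{\mp\frac12\pi\,\mathrm{Im}\,\alpha}$ and $e^{\pi\,\mathrm{Im}\,\alpha}$, I obtain
\[
|m(\tau)|\le c(\alpha)\,|\tau|^{\mathrm{Re}\,\alpha}\le c(\alpha)\,\bra{\tau}^{\mathrm{Re}\,\alpha},
\]
where the last inequality uses $|\tau|\le\bra{\tau}$ together with $\mathrm{Re}\,\alpha\ge0$, and $c(\alpha)$ depends only on $\mathrm{Im}\,\alpha$.

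Finally I would combine the symbol bound with the Fourier characterization of $H^s(\R)$: multiplying by $\bra{\tau}^{s}$, squaring and integrating gives
\[
\|\mathcal{I}_{-\alpha}h\|_{H^s(\R)}^2=\int_{\R}\bra{\tau}^{2s}|m(\tau)|^2|\wh{h}(\tau)|^2\,d\tau\le c(\alpha)^2\int_{\R}\bra{\tau}^{2(s+\mathrm{Re}\,\alpha)}|\wh{h}(\tau)|^2\,d\tau=c(\alpha)^2\,\|h\|_{H^{s+\mathrm{Re}\,\alpha}(\R)}^2.
\]
Chaining this with the two reductions of the first paragraph yields the lemma.

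I expect the only genuine subtlety to be the bookkeeping around the distributional power $(\tau-i0)^{\alpha}$: one must check that it acts as a bona fide multiplier of polynomial growth, in particular that the condition $\mathrm{Re}\,\alpha\ge0$ rules out any singularity at $\tau=0$, and that the convolution identity \eqref{transformada} is invoked on the dense class $C_{0,c}^{\infty}(\R^+)$ where both sides are classical. The support-preservation step, though elementary, is the piece that actually links the full-line multiplier estimate to the half-line $H_0^s$ spaces, and I would state it explicitly rather than leave it implicit.
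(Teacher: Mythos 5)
Your proposal is correct, and it is essentially \emph{the} proof of this lemma: the paper itself offers no argument (the statement is quoted as Lemma 5.3 of \cite{Holmerkdv}), and your three steps --- density of $C_{0,c}^{\infty}(\mathbb{R}^+)$, support preservation via $\mathcal{I}_{-\alpha}h\in C_0^{\infty}(\mathbb{R}^+)$ so that $\|\mathcal{I}_{-\alpha}h\|_{H_0^s(\mathbb{R}^+)}=\|\mathcal{I}_{-\alpha}h\|_{H^s(\mathbb{R})}$, and the pointwise multiplier bound $|e^{\frac12\pi i\alpha}(\tau-i0)^{\alpha}|\le c(\alpha)\langle\tau\rangle^{\mathrm{Re}\,\alpha}$ (where $\mathrm{Re}\,\alpha\ge 0$ rules out any singularity at $\tau=0$) --- are exactly the steps of Holmer's proof. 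Nothing is missing.
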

\begin{lemma}[Lemma 5.4 in \cite{Holmerkdv}]
	If $0\leq \mathrm{Re}\ \alpha <\infty$, $s\in \mathbb{R}$ and $\mu\in C_0^{\infty}(\mathbb{R})$, then
	$\|\mu\mathcal{I}_{\alpha}h\|_{H_0^s(\mathbb{R}^+)}\leq c \|h\|_{H_0^{s-\alpha}(\mathbb{R}^+)},$ where $c=c(\mu, \alpha)$.
\end{lemma}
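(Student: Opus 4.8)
The plan is to reduce the estimate to the dense class $h \in C_{0,c}^{\infty}(\mathbb{R}^+)$ and to exploit that the Riemann--Liouville kernel $t_+^{\alpha-1}/\Gamma(\alpha)$ is \emph{causal}: since $\mathcal{I}_{\alpha}h$ is then supported in $[0,\infty)$ (by the lemma preceding Lemma \ref{lio}), so is $\mu \mathcal{I}_{\alpha}h$, whence $\|\mu \mathcal{I}_{\alpha}h\|_{H_0^s(\mathbb{R}^+)} = \|\mu \mathcal{I}_{\alpha}h\|_{H^s(\mathbb{R})}$ and I may work with the full-line norm. The imaginary part of $\alpha$ enters only through bounded multiplicative constants (see \eqref{transformada2}), so only $\mathrm{Re}\,\alpha$ affects the regularity count. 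Passing to the Fourier side via \eqref{transformada}, multiplication by $\mu$ becomes convolution with the Schwartz function $\widehat{\mu}$, while $\mathcal{I}_{\alpha}$ becomes the Fourier multiplier $(\tau - i0)^{-\alpha}$, of modulus $|\tau|^{-\mathrm{Re}\,\alpha}$.

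The governing dichotomy is between high and low temporal frequencies. For $|\tau| \gtrsim 1$ one has the pointwise bound $|\tau|^{-\mathrm{Re}\,\alpha} \lesssim \langle \tau\rangle^{-\mathrm{Re}\,\alpha}$, so there $\mathcal{I}_{\alpha}$ genuinely smooths by $\mathrm{Re}\,\alpha$ orders and the estimate $\|\mathcal{I}_{\alpha}P_{\ge 1}h\|_{H^s(\mathbb{R})} \lesssim \|h\|_{H^{s-\mathrm{Re}\,\alpha}}$ holds \emph{without} the cutoff; composing with the (bounded on every $H^s(\mathbb{R})$) operator $f \mapsto \mu f$ disposes of this piece, after absorbing the mild frequency spreading of $\widehat{\mu}$ through Peetre's inequality $\langle\tau\rangle^s \lesssim \langle\sigma\rangle^s\langle\tau-\sigma\rangle^{|s|}$ and the rapid decay of $\widehat{\mu}$. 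All the difficulty is concentrated at low frequencies, where $|\tau|^{-\mathrm{Re}\,\alpha}$ blows up and, for $\mathrm{Re}\,\alpha \ge 1$, $\mathcal{I}_{\alpha}h$ in fact grows polynomially in $t$; this is exactly where the time cutoff $\mu$ is indispensable.

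To treat the low-frequency part I would pass to homogeneous spaces, in which $\mathcal{I}_{\alpha}$ maps $\dot H^{s-\mathrm{Re}\,\alpha}$ boundedly onto $\dot H^{s}$ (the multiplier $|\tau|^{-\mathrm{Re}\,\alpha}$ converts $|\tau|^{s-\mathrm{Re}\,\alpha}$ into $|\tau|^{s}$), and then trade homogeneous for inhomogeneous norms via Lemma \ref{sobolev0}: because $\mu\mathcal{I}_{\alpha}h$ is compactly supported and, by causality, only the restriction of $h$ to a fixed compact interval is seen by $\mu\mathcal{I}_{\alpha}h$, the equivalence $\|\cdot\|_{H^{\sigma}} \sim \|\cdot\|_{\dot H^{\sigma}}$ for $-\tfrac12 < \sigma < \tfrac12$ applies to both $\mu\mathcal{I}_{\alpha}h$ and $h$. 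This settles $0 \le \mathrm{Re}\,\alpha < 1$ with $s$ in a window about $0$. The restriction $\mathrm{Re}\,\alpha<1$ is then removed by the causal factorization $\mathcal{I}_{\alpha} = \mathcal{I}_{\alpha - n}\mathcal{I}_{n}$ with $0 \le \mathrm{Re}(\alpha - n) < 1$: inserting an auxiliary cutoff $\widetilde\mu \equiv 1$ on $\mathrm{supp}\,\mu$ (legitimate precisely by causality) gives $\mu\mathcal{I}_{\alpha}h = \mu\,\mathcal{I}_{\alpha - n}(\widetilde\mu\,\mathcal{I}_{n}h)$ with $\widetilde\mu\,\mathcal{I}_{n}h \in C_{0,c}^{\infty}(\mathbb{R}^+)$, reducing matters to integer orders, which one closes by induction through the Leibniz identity $\partial_t(\mu\mathcal{I}_{\alpha}h) = \mu'\mathcal{I}_{\alpha}h + \mu\mathcal{I}_{\alpha - 1}h$. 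The same identity, together with Lemmas \ref{lio} and \ref{cut}, lets me peel off integer time-derivatives so as to reach arbitrary $s\in\mathbb{R}$ from the base window.

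The main obstacle I anticipate is precisely this low-frequency analysis. The multiplier $|\tau|^{-\mathrm{Re}\,\alpha}$ is too singular at $\tau=0$ for any bound that ignores the support of $h$ — a naive Schur test on the kernel degenerates once $\mathrm{Re}\,\alpha \ge \tfrac12$ — so the crux is to make essential and carefully bookkept use of the one-sided support, via causality and the homogeneous/inhomogeneous equivalence of Lemma \ref{sobolev0}, and to align the admissible regularity windows of that equivalence with the running values of $s$ and $\mathrm{Re}\,\alpha$ throughout the induction and factorization.
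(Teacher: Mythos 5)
The paper itself offers no proof of this lemma: it is quoted, with citation, from Holmer \cite{Holmerkdv} (Lemma 5.4 there), so your attempt has to be judged against the full statement rather than against an argument in the text. Measured that way, your base case is essentially sound: causality of the kernel $t_+^{\alpha-1}/\Gamma(\alpha)$ does let you assume $h\in C^{\infty}_{0,c}(\mathbb{R}^+)$ supported in a fixed compact set, the high-frequency piece is a harmless inhomogeneous multiplier estimate, and the low-frequency chain
\[
\|\mu\mathcal{I}_{\alpha}h\|_{H^{s}}\lesssim \|\mathcal{I}_{\alpha}h\|_{\dot H^{s}}\sim_{\alpha}\|h\|_{\dot H^{s-\mathrm{Re}\,\alpha}}\lesssim \|h\|_{H^{s-\mathrm{Re}\,\alpha}}
\]
is valid via Lemma \ref{sobolev0} and compact supports. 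But this chain requires \emph{both} exponents $s$ and $s-\mathrm{Re}\,\alpha$ to lie in $(-\tfrac12,\tfrac12)$: it covers only the diagonal window $\mathrm{Re}\,\alpha-\tfrac12<s<\tfrac12$, whose $s$-width is $1-\mathrm{Re}\,\alpha$, and which is \emph{empty} once $\mathrm{Re}\,\alpha\ge 1$.

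The genuine gap is that the announced bootstrap cannot close from this base. The Leibniz identity $\partial_t(\mu\mathcal{I}_{\alpha}h)=\mu'\mathcal{I}_{\alpha}h+\mu\mathcal{I}_{\alpha-1}h$, combined with $\|F\|_{H^{\sigma+1}}\sim\|F\|_{H^{\sigma}}+\|F'\|_{H^{\sigma}}$, only produces estimates at \emph{higher} regularity from lower ones (the term $\mu\mathcal{I}_{\alpha-1}h$ being handled by Lemmas \ref{lio} and \ref{cut} when $\mathrm{Re}\,\alpha<1$). Consequently: (i) nothing you propose reaches any $s\le \mathrm{Re}\,\alpha-\tfrac12$, in particular none of the negative regularities the lemma asserts; (ii) since the base window has length $1-\mathrm{Re}\,\alpha<1$, the integer upward shifts leave uncovered gaps of length $\mathrm{Re}\,\alpha$ in every unit interval (for $\mathrm{Re}\,\alpha=\tfrac34$ you only ever cover $s\in(\tfrac14+m,\tfrac12+m)$); and (iii) the integer case $\alpha=n$, which your factorization $\mu\mathcal{I}_{\alpha}h=\mu\mathcal{I}_{\alpha-n}(\widetilde\mu\,\mathcal{I}_{n}h)$ needs at the shifted, generically non-integer and possibly negative, regularity $s-\mathrm{Re}(\alpha-n)$, is never given a base: it does not lie in the fractional window (empty at $\mathrm{Re}\,\alpha=n\ge1$), and your induction on $n$ again needs a starting regularity for each $n$ that is not supplied. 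The natural repairs lie outside your stated toolkit and are not routine: a direct computation such as $\|\mu\mathcal{I}_{n}h\|_{H^{n}}\lesssim\|h\|_{L^{2}}$ to seed the integer case, complex interpolation to fill non-integer regularities, and a duality argument to reach negative $s$ — the last being delicate because the adjoint of $\mathcal{I}_{\alpha}$ is \emph{anti-causal}, so duality trades the lemma for its mirror image and must be organized so as not to be circular; likewise, writing $h=\partial_t^{k}\mathcal{I}_{k}h$ to lower the regularity of $h$ begs the question, since bounding $\|\mathcal{I}_{k}h\|$ without an outer cutoff is false. As written, your argument proves the estimate only on a proper sub-range of $(s,\mathrm{Re}\,\alpha)$, not for all $s\in\mathbb{R}$ and $0\le\mathrm{Re}\,\alpha<\infty$ as the lemma (and its use in this paper, e.g.\ for $\mathcal{I}_{1/5}g$ at regularity $\tfrac{s+2}{5}$ up to $\tfrac{9}{10}$) requires.
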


\subsection{Oscillatory integral}
Let 
\begin{equation}\label{eq:oscil}
B(x)=\frac{1}{2\pi}\int_{\R}e^{ix\xi}e^{i \xi^5} \; d\xi.
\end{equation} 

We first calculate $B(0)$ and $B^{(n)}(0)$, $n=1,2,3$. A change of variable ($\eta = \xi^5$), we have 
\[
B(0)=\frac{1}{2\pi}\int_{\R}e^{i \xi^5} \;d\xi=\frac{1}{10\pi}\int_{\R}e^{i \eta}\eta^{-\frac45}\;d\eta.
\]
We separate the interval into $(0,\infty)$ and $(-\infty,0)$, and apply the change of contour to each interval to obtain
\[B(0) = \frac{1}{10\pi}\left(e^{\frac{i\pi}{10}}\Gamma(1/5) + e^{-\frac{i\pi}{10}}\Gamma(1/5) \right).\]
Hence, by using the property of the gamma function $\Gamma(z)\Gamma(1-z) = \frac{\pi}{\sin (z\pi )}$, we obtain
\begin{equation}\label{B(0)}
B(0) = \frac{\cos\left(\frac{\pi}{10}\right)}{5\pi}\Gamma(1/5) = \frac{\cos\left(\frac{\pi}{10}\right)}{5\sin\left(\frac{\pi}{5}\right)\Gamma(4/5)}.
\end{equation}
A direct calculation gives
\begin{equation}\label{eq:oscil0}
B^{(n)}(x)=\frac{1}{2\pi}\int_{\R}(i\xi)^ne^{ix\xi}e^{i \xi^5} \; d\xi = \frac{1}{10\pi}\int_{\R}e^{i \eta}\eta^{\frac{n-4}{5}}\;d\eta., \qquad n = 1, 2, 3, \cdots.
\end{equation} 
Similarly, we have
\begin{equation}\label{B'(0)}
B'(0) = -\frac{\cos\left(\frac{3\pi}{10}\right)}{5\pi}\Gamma(2/5) = -\frac{\cos\left(\frac{3\pi}{10}\right)}{5\sin\left(\frac{2\pi}{5}\right)\Gamma(3/5)},\
\end{equation}
\begin{equation}\label{B''(0)}
B''(0) = -\frac{\cos\left(\frac{3\pi}{10}\right)}{5\pi}\Gamma(3/5) = -\frac{\cos\left(\frac{3\pi}{10}\right)}{5\sin\left(\frac{2\pi}{5}\right)\Gamma(2/5)}
\end{equation}
and 
\begin{equation}\label{B'''(0)}
B^{(3)}(0) = \frac{\cos\left(\frac{\pi}{10}\right)}{5\pi}\Gamma(4/5) = \frac{\cos\left(\frac{\pi}{10}\right)}{5\sin\left(\frac{\pi}{5}\right)\Gamma(1/5)},
\end{equation}
thanks to $\sin(z\pi) = \sin((1-z)\pi)$. 

Moreover, from \eqref{eq:oscil}, Fubini theorem and the definition of the Heaviside function $H(x)$ (see \cite{Fr} for the details), we have
\[\int_0^{\infty} B(y) \; dy = \frac{1}{2\pi}\int \int_0^{\infty} e^{iy\xi} \; dy e^{i\xi^5} \; d\xi = \frac{1}{2\pi}\int \wh{H}(-\xi)e^{i\xi^5} \; d\xi.\]
We use the change of variable ($\xi \mapsto -\xi$) the properties of the Heaviside function ($\wh{H}(\xi) = \mbox{p.v.}\frac{1}{i\xi} + \pi \delta(\xi)$) to obtain
\[\frac{1}{2\pi}\int \wh{H}(-\xi)e^{i\xi^5} \; d\xi = \frac{1}{2\pi}\int (\mbox{p.v.}\frac{1}{i\xi} + \pi \delta(\xi))e^{-i\xi^5} \; d\xi.\]

The change of variable ($\xi \mapsto \eta^{\frac15}$) and the fact $\ft_x[\mbox{p.v.}\frac1x](\xi) = -i\pi \mbox{sgn}(\xi)$ yield 

\[\int_0^{\infty} B(y) \; dy = \frac{1}{2\pi} \left( - \frac{\pi}{5} + \pi \right) = \frac25.\]

\begin{lemma}[Decay of oscillatory integral $B(x)$, \cite{SS2003}\footnote{In fact, in \cite{SS2003}, the decay of the Airy function is given. However, the same argument can be applied to \eqref{eq:oscil} to obtain Lemma \ref{lem:decay}.}]\label{lem:decay}
	Suppose $x>0$. Then as $x\rightarrow \infty$,
	\begin{itemize}
		\item [(i)] $B(x)\lesssim \langle x \rangle^{-N}$ for all $N>0$.
		\item [(ii)] $B(-x)\lesssim \langle x \rangle^{-3/8}$.
	\end{itemize}
\end{lemma}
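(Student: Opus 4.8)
The plan is to reduce both estimates to a single rescaled oscillatory integral with a large parameter and then apply the method of (non-)stationary phase. For $x > 0$ the substitution $\xi = x^{1/4}u$ in \eqref{eq:oscil} gives
\[
B(\pm x) = \frac{x^{1/4}}{2\pi}\int_{\R} e^{i x^{5/4}\Psi_{\pm}(u)}\,du, \qquad \Psi_{+}(u) = u^5 + u, \quad \Psi_{-}(u) = u^5 - u,
\]
where $B(x)$ corresponds to $\Psi_+$ and $B(-x)$ to $\Psi_-$. Thus everything reduces to understanding $\int_{\R} e^{i\lambda \Psi_{\pm}(u)}\,du$ with large parameter $\lambda = x^{5/4}$, after which the decay of $B$ is read off by multiplying by the prefactor $x^{1/4}$.

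For part (i) the phase $\Psi_+$ has derivative $\Psi_+'(u) = 5u^4 + 1 \ge 1$, so it has no real critical point. I would integrate by parts repeatedly using the operator $L = (i\lambda \Psi_+')^{-1}\partial_u$, which satisfies $L e^{i\lambda\Psi_+} = e^{i\lambda\Psi_+}$. Each application via its transpose gains a factor $\lambda^{-1}$ and introduces factors built from $1/\Psi_+'$ and its derivatives; since $\Psi_+' \ge 1$ everywhere and $1/\Psi_+' \sim |u|^{-4}$ for large $|u|$, the amplitudes after $N$ steps are bounded by $\lambda^{-N}$ times an integrable function of $u$, and the tail integrals converge. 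This yields $\int e^{i\lambda\Psi_+}\,du \lesssim_N \lambda^{-N}$, hence $|B(x)| \lesssim x^{1/4 - 5N/4}$; taking $N$ large gives the claimed rapid decay $\langle x \rangle^{-N}$ for every $N$.

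For part (ii) the phase $\Psi_-$ has two non-degenerate critical points at $u = \pm 5^{-1/4}$, where $\Psi_-''(u) = 20u^3$ is bounded away from zero. I would split the integral with a smooth partition of unity into small fixed neighborhoods of $\pm 5^{-1/4}$ and the complementary region. On each critical neighborhood the stationary phase asymptotics (equivalently the second-derivative van der Corput estimate) give a contribution of size $|\lambda\,\Psi_-''|^{-1/2} \sim \lambda^{-1/2}$. On the complementary region $\Psi_-'$ is bounded below and monotone on each tail, so the first-derivative van der Corput estimate, together with the convergence of $\int |(1/\Psi_-')'|\,du$, yields the faster decay $O(\lambda^{-1})$. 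Collecting the dominant $\lambda^{-1/2}$ term and multiplying by $x^{1/4}$ gives $|B(-x)| \lesssim x^{1/4}\cdot x^{-5/8} = x^{-3/8}$, as claimed.

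The main obstacle is the non-compactness of the $u$-integral: the oscillatory integral does not converge absolutely, so the tail regions $|u| \to \infty$ must be handled with care. The key point making this tractable is that $\Psi_{\pm}'(u) \sim 5u^4$ grows for large $|u|$, so integration by parts in the tail produces rapidly convergent integrals contributing only lower-order terms; rigorously one may first insert a convergence factor $e^{-\epsilon u^2}$ and pass to the limit $\epsilon \to 0$. The boundedness of $B$ on compact $x$-sets is immediate from smoothness, so the bracket $\langle x \rangle$ in both estimates covers the full range.
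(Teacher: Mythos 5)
Your proof is correct and follows essentially the same route the paper indicates (deferring to the Stein--Shakarchi Airy-function argument): repeated integration by parts in the non-stationary regime for (i), and van der Corput/stationary phase at the non-degenerate critical points for (ii), with the rescaling $\xi = x^{1/4}u$ serving only to normalize the large parameter $\lambda = x^{5/4}$. The exponent bookkeeping $x^{1/4}\cdot\lambda^{-1/2} = x^{-3/8}$ matches the claimed decay, and your $\epsilon$-regularization remark properly handles the conditional convergence of the tails.
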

We only give a comment on the proof that the proof of (i) follows the repeated integration by parts, while the proof of (ii) essentially follows the van der Corput lemma. Moreover, one can prove $B(x)$ has exponentially decay as $x \to \infty$.

\begin{lemma}[Mellin transform of $B(x)$] \label{mellin}$\;$\\

	 \begin{itemize}
		\item[(i)]For Re $\lambda>0$ we have
		\begin{equation}\label{mellin2}
		\int_0^{\infty}x^{\lambda-1}B(x)dx=\frac{\Gamma(\lambda)\Gamma(\frac15-\frac{\lambda}{5})}{5 \pi}\cos \left(\frac{(1+4\lambda)\pi}{10}\right).
		\end{equation}
\item[(ii)] For $0<\mathrm{Re}\ \lambda <\frac{3}{8}$ we have
	\begin{equation}\label{mellin1}
\int_0^{\infty}x^{\lambda-1}B(-x)dx=\frac{\Gamma(\lambda)\Gamma(\frac15-\frac{\lambda}{5})}{5 \pi}\cos \left(\frac{(1-6\lambda)\pi}{10}\right).
	\end{equation}
		\end{itemize}
\end{lemma}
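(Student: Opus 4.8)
The plan is to substitute the definition \eqref{eq:oscil} of $B$ directly into the Mellin integral and interchange the order of integration, so that both identities collapse onto the elementary one-dimensional formula
\[\int_0^{\infty}x^{\lambda-1}e^{\pm ix\xi}\,dx=\Gamma(\lambda)\,|\xi|^{-\lambda}\,e^{\pm\frac{i\pi\lambda}{2}\sgn(\xi)},\qquad 0<\mathrm{Re}\,\lambda<1,\]
understood as the $\epsilon\to0^+$ limit of the absolutely convergent $\int_0^{\infty}x^{\lambda-1}e^{(\pm i\xi-\epsilon)x}\,dx=\Gamma(\lambda)(\epsilon\mp i\xi)^{-\lambda}$, with the branch of $(\epsilon\mp i\xi)^{-\lambda}$ fixed by continuation from $\mathrm{Re}>0$. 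After swapping and splitting $\R=\{\xi>0\}\cup\{\xi<0\}$, the substitution $\xi\mapsto-\xi$ on the negative half-line turns part (i) into
\[\int_0^{\infty}x^{\lambda-1}B(x)\,dx=\frac{\Gamma(\lambda)}{2\pi}\left[e^{\frac{i\pi\lambda}{2}}\int_0^{\infty}\xi^{-\lambda}e^{i\xi^5}\,d\xi+e^{-\frac{i\pi\lambda}{2}}\int_0^{\infty}\xi^{-\lambda}e^{-i\xi^5}\,d\xi\right],\]
while part (ii), which carries $e^{-ix\xi}$ instead of $e^{ix\xi}$, gives the same expression with the two outer phases $e^{\pm\frac{i\pi\lambda}{2}}$ interchanged.

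Next I would evaluate the two inner integrals by the change of variable $\eta=\xi^5$ already used for \eqref{eq:oscil}, which sends each of them to $\tfrac15\int_0^{\infty}\eta^{\beta-1}e^{\pm i\eta}\,d\eta$ with $\beta=\frac{1-\lambda}{5}$, and then apply the same gamma-function identity once more, namely $\int_0^{\infty}\eta^{\beta-1}e^{\pm i\eta}\,d\eta=\Gamma(\beta)e^{\pm\frac{i\pi\beta}{2}}$, valid since $0<\mathrm{Re}\,\beta<1$ throughout the stated ranges of $\lambda$. This produces the common prefactor $\frac{\Gamma(\lambda)\Gamma(\frac15-\frac{\lambda}{5})}{10\pi}$ together with a sum $e^{i\theta}+e^{-i\theta}=2\cos\theta$ of conjugate exponentials, the factor $2$ upgrading the prefactor to $\frac{1}{5\pi}$. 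Collecting phases, part (i) has $\theta=\frac{\pi\lambda}{2}+\frac{\pi\beta}{2}=\frac{(1+4\lambda)\pi}{10}$ and part (ii) has $\theta=\frac{\pi\beta}{2}-\frac{\pi\lambda}{2}=\frac{(1-6\lambda)\pi}{10}$, which are exactly \eqref{mellin2} and \eqref{mellin1}.

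The admissible ranges for $\lambda$ fall out of Lemma \ref{lem:decay}: near the origin both $B(x)$ and $B(-x)$ are bounded, so integrability of $x^{\lambda-1}$ forces $\mathrm{Re}\,\lambda>0$ in both parts; at infinity the rapid decay in (i) of Lemma \ref{lem:decay} imposes no upper bound, whereas the slower decay $B(-x)\lesssim\langle x\rangle^{-3/8}$ in (ii) forces $\mathrm{Re}\,\lambda<\frac38$, matching the hypotheses.

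The main obstacle is the rigorous justification of the interchange of integration, since the $\xi$-integral defining $B$ and the resulting $\int_0^{\infty}\eta^{\beta-1}e^{\pm i\eta}\,d\eta$ are only conditionally convergent, so Fubini does not apply as stated. I would handle this by inserting a regularizer $e^{-\epsilon\xi^2}$ into \eqref{eq:oscil}, carrying out the now absolutely convergent double integral, and passing to the limit $\epsilon\to0^+$ using the gamma-function limits above; alternatively one may verify the identities on a sub-strip where the Mellin integral converges absolutely and then extend by analytic continuation in $\lambda$, both sides being holomorphic (up to removable points) on the indicated domains. The only other delicate point is keeping the branch of the complex powers consistent across the two half-lines, which is precisely what generates the $\sgn(\xi)$-dependent phases and hence the two distinct cosine arguments.
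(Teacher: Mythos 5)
Your computational skeleton is exactly the one behind the paper's proof, which is a one-line deferral to Lemma 3.3 of \cite{Holmerkdv} (with, as the paper notes, the change of variables $\eta=\xi^5$ as the only new ingredient): substitute \eqref{eq:oscil}, split into $\{\xi>0\}$ and $\{\xi<0\}$ to pick up the phases $e^{\pm i\pi\lambda/2}$, substitute $\eta=\xi^5$ to reach $\frac15\Gamma\bigl(\tfrac{1-\lambda}{5}\bigr)e^{\pm i\pi\beta/2}$ with $\beta=\frac{1-\lambda}{5}$, and add conjugate exponentials. Your phase arithmetic $\frac{\pi(\lambda+\beta)}{2}=\frac{(1+4\lambda)\pi}{10}$ and $\frac{\pi(\beta-\lambda)}{2}=\frac{(1-6\lambda)\pi}{10}$, the prefactor $\frac{\Gamma(\lambda)\Gamma(\frac15-\frac{\lambda}{5})}{5\pi}$, and the identification of the admissible ranges from Lemma \ref{lem:decay} are all correct.

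There is, however, a genuine gap in your justification of the interchange, which you rightly single out as the main obstacle. Inserting $e^{-\epsilon\xi^2}$ into \eqref{eq:oscil} does \emph{not} make the double integral absolutely convergent: its modulus is $x^{\mathrm{Re}\,\lambda-1}e^{-\epsilon\xi^2}$, and $\int_0^\infty x^{\mathrm{Re}\,\lambda-1}\,dx$ diverges for every $\lambda$, because taking absolute values destroys precisely the oscillation $e^{\pm ix\xi}$ that makes the $x$-integral converge. For the same reason there is no sub-strip of $\lambda$ on which the unregularized double integral is absolutely convergent, so the fallback of verifying the identity on a sub-strip and continuing analytically cannot justify the interchange either; analytic continuation is indeed needed, but for a different purpose, namely to extend part (i) from $0<\mathrm{Re}\,\lambda<1$ (the range your one-dimensional gamma formula tolerates) to all $\mathrm{Re}\,\lambda>0$. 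Two standard repairs: first, regularize in \emph{both} variables, inserting $e^{-\delta x}e^{-\epsilon\xi^2}$, apply Fubini to the now genuinely absolutely convergent integral, let $\delta\to0^+$ by dominated convergence (using $|(\delta\mp i\xi)^{-\lambda}|\lesssim|\xi|^{-\mathrm{Re}\,\lambda}$, again for $\mathrm{Re}\,\lambda<1$), and then let $\epsilon\to0^+$ via Abel's test together with decay bounds for the regularized $B$ that are uniform in $\epsilon$; or second, for part (i), rotate the $\xi$-contour onto the rays $\arg\zeta=\frac{\pi}{10}$ and $\arg\zeta=\frac{9\pi}{10}$, on which $e^{i\zeta^5}=e^{-|\zeta|^5}$ and $e^{ix\zeta}$ decays exponentially for $x>0$, so that Fubini applies outright. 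Note that this rotation is unavailable for part (ii), since the phase $-x\xi+\xi^5$ has real stationary points (the very reason $B(-x)$ only decays like $\langle x\rangle^{-3/8}$ in Lemma \ref{lem:decay}), so there the two-parameter regularization is the argument that survives. With either repair in place, your proof is complete and coincides with the one the paper cites.
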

Remark that a direct calculation gives $\Gamma(\frac15-\frac{\lambda}{5})$ has poles at $\lambda = 1+ 5n$, $n=0,1,2, \cdots$, but
\[\cos \left(\frac{(1+4(1+5n))\pi}{10}\right) = \cos \left(\frac{\pi}{2} + 2n\pi \right) = 0.\]
Moreover, the range of $\mbox{Re} \ \lambda$ relies on the decay rates of $B(x)$ and $B(-x)$ in Lemma \ref{lem:decay}. 
\begin{proof}
The proof basically follows the proof of Lemma 3.3 in \cite{Holmerkdv} and the only difference appears in the change of variable ($\eta = \xi^5$). See \cite{Holmerkdv} for the details.
\end{proof}

\section{Duhamel boundary forcing operator}\label{sec:Duhamel boundary forcing operator}
In this section, we study the Duhamel boundary forcing operator, which was introduced by Colliander and Kenig \cite{CK}, in order to construct the solution to \eqref{eq:5kdv} forced by boundary conditions. We also refer to \cite{Holmerkdv, CC, Cavalcante} for more details.  
\subsection{Duhamel boundary forcing operator class}

We introduce the Duhamel boundary forcing operator associated to the linear Kahawara equation. Let 
\begin{equation}\label{eq:M}
M = \frac{1}{B(0)\Gamma(4/5)}.
\end{equation} 
For $f\in C_0^{\infty}(\mathbb{R}^+)$, define the boundary forcing operator $\mathcal{L}^0$ of order $0$
\begin{equation}\label{eq:BFO}	
\mathcal{L}^0f(t,x):=M\int_0^te^{(t-t')\partial_x^5}\delta_0(x)\mathcal{I}_{-\frac45}f(t')dt'.
\end{equation}
We note that the property of convolution operator ($\partial_x (f * g) = (\partial_xf) * g = f * (\partial_xg)$) and the integration by parts in $t'$ in \eqref{eq:BFO} gives
\begin{equation}\label{eq:BFO1}
\mathcal{L}^0(\partial_t f)(t,x) = M\delta_0(x)\mathcal{I}_{-\frac45}f(t) + \partial_x^5\mathcal{L}^0f(t,x).
\end{equation}
A change of variable and \eqref{eq:oscil} give
\begin{equation}\label{forcing}
\begin{split}
	\mathcal{L}^0f(t,x)&=M\int_0^te^{(t-t')\partial_x^5}\delta_0(x)\mathcal{I}_{-\frac45}f(t')dt'\\
	&=M\int_0^t B\left(\frac{x}{(t-t')^{1/5}}\right)\frac{\mathcal{I}_{-\frac45}f(t')}{(t-t')^{1/5}}dt'.
\end{split}	
\end{equation}

\begin{lemma}\label{continuity}
	Let $f\in C_{0,c}^{\infty}(\mathbb{R}^+)$.
	\begin{itemize}
\item[(a)] For fixed $ 0 \le t \le 1$, $\partial_x^k \mathcal{L}^0f(t,x)$, $k=0,1,2,3$, is continuous in $x \in \mathbb{R}$ and has the decay property in terms of the spatial variable as follows:
\begin{equation}\label{eq:decay1}
|\partial_x^k \mathcal{L}^0f(t,x)| \lesssim_{N} \norm{f}_{H^{N+k}}\bra{x}^{-N}, \quad N \ge 0.
\end{equation}
\item[(b)] For fixed $ 0 \le t \le 1$, $\partial_x^4\mathcal{L}^0f(t,x)$ is continuous in $x$ for $x\neq 0$ and is discontinuous at $x =0$ satisfying
\[\lim_{x\rightarrow 0^{-}}\partial_x^4\mathcal{L}^0f(t,x)=c_1\mathcal{I}_{-4/5}f(t),\ \lim_{x\rightarrow 0^{+}}\partial_x^4\mathcal{L}^0f(t,x)=c_2\mathcal{I}_{-4/5}f(t)\]
for $c_1 \neq c_2$. $\partial_x^4\mathcal{L}^0f(t,x)$ also has the decay property in terms of the spatial variable
\begin{equation}\label{eq:decay2}
|\partial_x^4\mathcal{L}^0f(t,x)| \lesssim_{N} \norm{f}_{H^{N+4}}\bra{x}^{-N}, \quad N \ge 0.
\end{equation}
	\end{itemize}
\end{lemma}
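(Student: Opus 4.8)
The plan is to start from the explicit representation \eqref{forcing}, write $\phi = \mathcal{I}_{-\frac45}f$, and substitute $\lambda = t-t'$ so that differentiating under the integral gives, for each $k$,
\[
\px^k \mathcal{L}^0 f(t,x) = M\int_0^t B^{(k)}\!\left(\frac{x}{\lambda^{1/5}}\right)\frac{\phi(t-\lambda)}{\lambda^{(k+1)/5}}\,d\lambda .
\]
The first observation is that $B$ solves the fourth-order ODE $B^{(4)}(x) = -\tfrac{x}{5}B(x)$, obtained by integrating $\int e^{ix\xi}\,\px(e^{i\xi^5})\,d\xi$ by parts in \eqref{eq:oscil}; hence $B$ is smooth and each $B^{(k)}$ is continuous. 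The van der Corput analysis behind Lemma \ref{lem:decay} upgrades to $|B^{(k)}(-y)|\lesssim \bra{y}^{k/4-3/8}$ for $y>0$, while $B^{(k)}$ still decays rapidly on the positive axis. Feeding these rates into the worst (negative) direction, the integrand near $\lambda=0$ is of size $|x|^{k/4-3/8}\lambda^{-(k/4+1/8)}$, so the time singularity is integrable exactly when $k\le 3$. This both justifies the differentiation under the integral and yields continuity of $\px^k\mathcal{L}^0 f(t,\cdot)$ on all of $\R$ for $k\le 3$, while the borderline failure at $k=4$ is precisely the source of the discontinuity in part (b).

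For the decay I would use a single representation valid for all $x$. Writing $B^{(k)}(x/\lambda^{1/5})$ via \eqref{eq:oscil} and performing the change of variable $\zeta=\xi\lambda^{-1/5}$, the powers of $\lambda$ cancel; after Fubini (justified by the decay of $\Psi$ below) one gets the clean Fourier form
\[
\px^k \mathcal{L}^0 f(t,x) = \frac{M}{2\pi}\int_{\R}(i\zeta)^k e^{ix\zeta}\,\Psi(\zeta)\,d\zeta,\qquad \Psi(\zeta)=\int_0^t e^{i\lambda\zeta^5}\phi(t-\lambda)\,d\lambda .
\]
Integrating by parts $N$ times in $\zeta$ produces the factor $\bra{x}^{-N}$, the boundary terms vanishing because $\zeta^k\Psi(\zeta)$ and its derivatives decay for $k\le 3$. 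Each $\zeta$-derivative landing on $e^{i\lambda\zeta^5}$ creates growth $\sim \lambda\zeta^4$, which I would absorb by integrating by parts in $\lambda$, using $\phi(0)=0$ (valid since $f$, hence $\phi=\mathcal{I}_{-\frac45}f$, vanishes near $0$) to discard the endpoint at $\lambda=t$; this trades every power of $\zeta^5$ for a time derivative of $\phi$. The resulting frequency integral is absolutely convergent, and tracking the $k$ spatial derivatives together with the order-$\tfrac45$ operator $\mathcal{I}_{-\frac45}$ bounds the constant by $\|f\|_{H^{N+k}}$, giving \eqref{eq:decay1}.

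For part (b) I keep this representation with $k=4$ and isolate the obstruction at $x=0$. One integration by parts in $\lambda$ in $\Psi$, using $\phi(0)=0$, gives $\Psi(\zeta) = -\phi(t)/(i\zeta^5) + O(|\zeta|^{-6})$, so that $\zeta^4\Psi(\zeta) = -\phi(t)/(i\zeta) + (\text{integrable})$. The integrable remainder has an inverse Fourier transform continuous across $x=0$, whereas the tail $-\phi(t)/(i\zeta)$ transforms to a term proportional to $\mathrm{sgn}(x)$, producing a genuine jump. Taking one-sided limits yields $\lim_{x\to 0^\mp}\px^4\mathcal{L}^0 f(t,x) = c_{1,2}\,\mathcal{I}_{-\frac45}f(t)$ with $c_1-c_2$ a nonzero multiple of $M$; this is consistent with \eqref{eq:BFO1}, which identifies the delta of strength $M\mathcal{I}_{-\frac45}f(t)$ carried by $\px^5\mathcal{L}^0 f$ and hence forces exactly this jump in $\px^4\mathcal{L}^0 f$. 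The decay \eqref{eq:decay2} for $x\neq 0$ follows verbatim from the frequency-side argument of part (a), applied to the regular part.

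The main obstacle throughout is that both the decay in the \emph{bad} direction $x<0$ and the case $k=4$ sit exactly at the endpoint of integrability, so absolute-value estimates are useless; the argument must extract everything from the oscillation of $B$, through the frequency-side integrations by parts and the cancellation $\phi(0)=0$, which is also what converts frequency growth into the Sobolev regularity $\|f\|_{H^{N+k}}$ and pins down the precise jump.
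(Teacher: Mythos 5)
Your route is essentially self-contained, whereas the paper disposes of this lemma in three lines: it cites Lemma 2.2 of \cite{Holmerkdv} for the continuity and the decay \eqref{eq:decay1}, and then reads off the discontinuity in (b) from \eqref{eq:decay1} together with the distributional identity \eqref{eq:BFO1}. Your main ingredients do check out: integrating $\int e^{ix\xi}\partial_\xi(e^{i\xi^5})\,d\xi$ by parts indeed gives $B^{(4)}(x)=-\tfrac{x}{5}B(x)$; stationary phase gives $|B^{(k)}(-y)|\lesssim \bra{y}^{k/4-3/8}$; the resulting time singularity $\lambda^{-(k/4+1/8)}$ is integrable exactly for $k\le 3$; and the scheme ``integrate by parts in $\zeta$, then trade each factor $\lambda\zeta^4$ for a $\lambda$-integration by parts using $\phi(0)=0$'' (with $\phi=\mathcal{I}_{-\frac45}f$, which does vanish near $0$ for $f\in C^\infty_{0,c}(\R^+)$) is a correct mechanism for \eqref{eq:decay1}. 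Two technical points should be restated: the Fubini step cannot be ``justified by the decay of $\Psi$ below'' --- that is circular, since the double integral is not absolutely convergent; regularize with $e^{-\epsilon\zeta^2}$ and pass to the limit. Also, the inverse transform of $-\phi(t)(1-\theta(\zeta))/(i\zeta)$ (with $\theta$ a cutoff at low frequency, which you must insert, since $\zeta^4\Psi(\zeta)$ is smooth at $\zeta=0$ and the singular/regular split only makes sense after cutting off) is not literally a multiple of $\mbox{sgn}(x)$: it is a bounded function with a unit jump at $x=0$ which also decays as $|x|\to\infty$. This matters, because a genuine $\mbox{sgn}(x)$ term would contradict \eqref{eq:decay2}.

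The genuine gap is in part (b): you prove strictly less than the statement asserts. Your decomposition yields that the one-sided limits exist and that their difference is a nonzero multiple of $M\,\mathcal{I}_{-4/5}f(t)$, but the lemma claims that each limit separately equals $c_i\,\mathcal{I}_{-4/5}f(t)$ with constants independent of $f$. In your decomposition the common contribution of the integrable remainder at $x=0$ is $\tfrac{M}{2\pi}\,\mathrm{p.v.}\!\int R(\zeta)\,d\zeta$, where $R(\zeta)=\zeta^4\Psi(\zeta)+\phi(t)\tfrac{1-\theta(\zeta)}{i\zeta}=\tfrac{1}{i\zeta}\bigl(\Psi_1(\zeta)-\phi(t)\theta(\zeta)\bigr)$ and $\Psi_1(\zeta)=\int_0^t e^{i\lambda\zeta^5}\phi'(t-\lambda)\,d\lambda$; a priori this depends on all of $\phi$ on $[0,t]$, not only on $\phi(t)$, so nothing you wrote forces it to be a multiple of $\mathcal{I}_{-4/5}f(t)$. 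It is, but that requires an extra computation: by Fubini and $\mathrm{p.v.}\!\int \tfrac{e^{i\lambda\zeta^5}}{i\zeta}\,d\zeta=\int_{\R}\tfrac{\sin(\lambda\zeta^5)}{\zeta}\,d\zeta=\tfrac{\pi}{5}$ for $\lambda>0$, one gets $\mathrm{p.v.}\!\int \tfrac{\Psi_1(\zeta)}{i\zeta}\,d\zeta=\tfrac{\pi}{5}\int_0^t\phi'=\tfrac{\pi}{5}\phi(t)$ (again using $\phi(0)=0$), which yields $c_2=-\tfrac{2M}{5}$ and $c_1=\tfrac{3M}{5}$. Alternatively, stay on the physical side: your own ODE gives $\px^4\mathcal{L}^0f(t,x)=-\tfrac{Mx}{5}\int_0^t B(x\lambda^{-1/5})\lambda^{-6/5}\phi(t-\lambda)\,d\lambda$, and the change of variable $\mu=x\lambda^{-1/5}$ turns this into $-M\int_{x/t^{1/5}}^{\infty}B(\mu)\,\phi(t-x^5\mu^{-5})\,d\mu$ for $x>0$, and $M\int_{-\infty}^{x/t^{1/5}}B(\mu)\,\phi(t-x^5\mu^{-5})\,d\mu$ for $x<0$; letting $x\to 0^{\pm}$ and using the values $\int_0^{\infty}B=\tfrac25$ and $\int_{\R}B=1$ computed in the paper pins down $c_1,c_2$ and recovers the jump $c_2-c_1=-M$, consistently with \eqref{eq:BFO1}. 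Either supplement closes the argument; without one of them, part (b) as stated is not proved.
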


\begin{proof}
The continuity of  $\partial_x^k \mathcal{L}^0f(t,x)$ follows from \eqref{eq:oscil0} with \eqref{B(0)}, \eqref{B'(0)}--\eqref{B'''(0)} for $k=0,1,2,3$ and the proof of \eqref{eq:decay1} exactly follows the proof of Lemma 2.2 in \cite{Holmerkdv}.  Moreover, \eqref{eq:decay1} and \eqref{eq:BFO1} yield that $\partial_x^4\mathcal{L}^0f(t,x)$ is discontinuous only at $x =0$ of size $M\mathcal{I}_{-\frac45}f(t)$ (where $M$ is defined as in \eqref{eq:M}), and has the decay bounds \eqref{eq:decay2}. See \cite{Holmerkdv} for the details.
	\end{proof}
We remark from Lemma \ref{continuity} that $\mathcal{L}^0f(t,0) = f(t)$. We, now, generalize the boundary forcing operator defined as in \eqref{eq:BFO} for $\lambda\in \mathbb{C}$ satisfying $\mbox{Re} \lambda>-5$. 

For $\mbox{Re}\ \lambda > 0$ and given $g\in C_0^{\infty}(\mathbb{R}^+)$, we define
\begin{equation}\label{eq:BFOC}
\mathcal{L}_{\pm}^{\lambda}g(t,x)=\left[\frac{x_{\mp}^{\lambda-1}}{\Gamma(\lambda)}*\mathcal{L}^0\big(\mathcal{I}_{-\frac{\lambda}{5}}g\big)(t, \cdot)   \right](x),
\end{equation}
where $*$ denotes the convolution operator. Note that $\mathcal{L}_{+}^{\lambda}$ (resp. $\mathcal{L}_{-}^{\lambda}$) will be used for the right half-line problem \eqref{kawahararight} (resp. the left half-line problem \eqref{kawaharaleft}). With $\frac{x_{-}^{\lambda-1}}{\Gamma(\lambda)}=\frac{(-x)_+^{\lambda-1}}{\Gamma(\lambda)}$, we also have
 \begin{equation}\label{forcing2}
\mathcal{L}_{+}^{\lambda}g(t,x)=\frac{1}{\Gamma(\lambda)}\int_{x}^{\infty}(y-x)^{\lambda-1}\mathcal{L}^0\big(\mathcal{I}_{-\frac{\lambda}{5}}g\big)(t,y)dy.
\end{equation}
 and 
\begin{equation}\label{forcing1}
\mathcal{L}_{-}^{\lambda}g(t,x)=\frac{1}{\Gamma(\lambda)}\int_{-\infty}^x(x-y)^{\lambda-1}\mathcal{L}^0\big(\mathcal{I}_{-\frac{\lambda}{5}}g\big)(t,y)dy.
\end{equation}
For $\mbox{Re} \lambda > -5$, we use the integration by parts in \eqref{forcing2} and \eqref{forcing1} with the decay property in Lemma \ref{continuity}, and \eqref{eq:BFO1} to obtain
\begin{equation}\label{classe22}
\begin{aligned}
\mathcal{L}_{+}^{\lambda}g(t,x)&=\left[\frac{x_{-}^{(\lambda+5)-1}}{\Gamma(\lambda+5)}*(-\px^5)\mathcal{L}^0\big(\mathcal{I}_{-\frac{\lambda}{5}}g\big)(t, \cdot)   \right](x)\\
&=M\frac{x_{-}^{(\lambda+5)-1}}{\Gamma(\lambda+5)}\mathcal{I}_{-\frac{4}{5}-\frac{\lambda}{5}}g(t)-\int_{x}^{\infty}\frac{(y-x)^{(\lambda+5)-1}}{\Gamma(\lambda+5)}\mathcal{L}^0\big(\partial_t\mathcal{I}_{-\frac{\lambda}{5}}g\big)(t,y)dy
\end{aligned}
\end{equation}
and
\begin{equation}\label{classe12}
\begin{aligned}
\mathcal{L}_{-}^{\lambda}g(t,x)&= \left[\frac{x_{+}^{(\lambda+5)-1}}{\Gamma(\lambda+5)}*\px^5\mathcal{L}^0\big(\mathcal{I}_{-\frac{\lambda}{5}}g\big)(t, \cdot)   \right](x)\\
&=-M\frac{x_{+}^{(\lambda+5)-1}}{\Gamma(\lambda+5)}\mathcal{I}_{-\frac{4}{5}-\frac{\lambda}{5}}g(t)+\int_{-\infty}^x\frac{(x-y)^{(\lambda+5)-1}}{\Gamma(\lambda+5)}\mathcal{L}^0\big(\partial_t\mathcal{I}_{-\frac{\lambda}{5}}g\big)(t,y)dy,
\end{aligned}
\end{equation}
respectively, where $M$ is as in \eqref{eq:M}. We, thus, choose the second term in \eqref{classe22} (resp. \eqref{classe12}) as the definition of $\mathcal{L}_{+}^{\lambda}g(t,x)$ (resp. $\mathcal{L}_{-}^{\lambda}g(t,x)$), and it immediately satisfies (in the sense of distributions)
\begin{equation*}
(\partial_t-\partial_x^5)\mathcal{L}_{-}^{\lambda}g(t,x)=M\frac{x_{+}^{\lambda-1}}{\Gamma(\lambda)}\mathcal{I}_{-\frac{4}{5}-\frac{\lambda}{5}}g(t)
\end{equation*}
and
\begin{equation*}
(\partial_t-\partial_x^5)\mathcal{L}_{+}^{\lambda}g(t,x)=M\frac{x_{-}^{\lambda-1}}{\Gamma(\lambda)}\mathcal{I}_{-\frac{4}{5}-\frac{\lambda}{5}}g(t).
\end{equation*}

\begin{lemma}[Spatial continuity and decay properties for $\mathcal{L}_{\pm}^{\lambda}g(t,x)$]\label{holmer1}
	Let $g\in C_0^{\infty}(\mathbb{R}^+)$ and $M$ be as in \eqref{eq:M}. Then, we have
	\begin{equation}\label{eq:relation}
	\mathcal{L}_{\pm}^{-k}g=\partial_x^k\mathcal{L}^{0}\mathcal{I}_{\frac{k}{5}}g, \qquad k=0,1,2,3,4.
	\end{equation}
	Moreover, $\mathcal{L}_{\pm}^{-4}g(t,x)$ is continuous in $x \in \R \setminus \set{0}$ and has a step discontinuity of size $Mg(t)$ at $x=0$. For $\lambda>-4$, $\mathcal{L}_{\pm}^{\lambda}g(t,x)$ is continuous in $x \in\mathbb{R}$. For $-4\leq\lambda\leq 1$ and  $0\leq t\leq 1$, $\mathcal{L}_{\pm }^{\lambda}g(t,x)$ satisfies the following decay bounds:
	\begin{align*}
	&|\mathcal{L}_{-}^{\lambda}g(t,x)|\leq c_{m,\lambda,g}\langle x\rangle^{-m},\; \text{for all}\quad x\leq 0 \quad \text{and} \quad m\geq0,\\ 
	&|\mathcal{L}_{-}^{\lambda}g(t,x)|\leq c_{\lambda,g}\langle x\rangle^{\lambda-1}, \quad \text{for all} \quad  x\geq 0.\\
	&|\mathcal{L}_{+}^{\lambda}g(t,x)|\leq c_{m,\lambda,g}\langle x\rangle^{-m}, \quad \text{for all} \quad x\geq 0 \quad \text{and} \quad m\geq0,\\ \intertext{and}
	&|\mathcal{L}_{+}^{\lambda}g(t,x)|\leq c_{\lambda,g}\langle x\rangle^{\lambda-1},\quad \text{for all}\quad  x\leq 0.
	\end{align*}
\end{lemma}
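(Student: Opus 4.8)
The plan is to follow the strategy of Colliander--Kenig \cite{CK} and Holmer \cite{Holmerkdv}, adapting it to the fifth-order kernel $B$, and to prove the four assertions in order: the operator identity \eqref{eq:relation} at the integer values $\lambda=-k$, then the continuity/jump statement, and finally the two-sided decay bounds. Throughout I would handle $\mathcal{L}_+$ and $\mathcal{L}_-$ in parallel, the only difference being whether the convolution kernel $x_\mp^{\lambda-1}/\Gamma(\lambda)$ in \eqref{eq:BFOC} is supported on $\{x<0\}$ or $\{x>0\}$ and, correspondingly, whether the good decay direction is $x\to+\infty$ or $x\to-\infty$. For \eqref{eq:relation} itself I would first record the spatial analogue of the distributional fact used for $\mathcal{I}_\alpha$ in Section~\ref{sec:pre}, namely that $\frac{x_\mp^{\lambda-1}}{\Gamma(\lambda)}$ tends to $\partial_x^k\delta_0$ (up to the sign produced by the reflection $x\mapsto -x$ hidden in $x_-$) as $\lambda\to -k$. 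Inserting this into \eqref{eq:BFOC} and using the semigroup law $\mathcal{I}_{-\lambda/5}\big|_{\lambda=-k}=\mathcal{I}_{k/5}$ turns the convolution into $\partial_x^k$ acting on $\mathcal{L}^0\mathcal{I}_{k/5}g$, which is exactly \eqref{eq:relation}.

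For continuity and the jump I would then feed Lemma~\ref{continuity} through \eqref{eq:relation}. For $k=0,1,2,3$, part (a) of that lemma gives that $\partial_x^k\mathcal{L}^0 h$ is continuous on $\R$, hence so is $\mathcal{L}_\pm^{-k}g$. For $k=4$, part (b) says $\partial_x^4\mathcal{L}^0 h$ is continuous away from $0$ with a step of size $M\mathcal{I}_{-4/5}h(t)$ at $x=0$; taking $h=\mathcal{I}_{4/5}g$ and using $\mathcal{I}_{-4/5}\mathcal{I}_{4/5}=\mathcal{I}_0=\mathrm{id}$ collapses the jump to exactly $Mg(t)$, with $M$ as in \eqref{eq:M}. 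For non-integer $\lambda>-4$ I would read continuity off the regularized representations \eqref{classe22}--\eqref{classe12}: there the kernel is $(y-x)^{\lambda+4}/\Gamma(\lambda+5)$ with exponent $\lambda+4>0$, hence continuous in $x$, integrated against $\mathcal{L}^0(\partial_t\mathcal{I}_{-\lambda/5}g)$, which decays rapidly by Lemma~\ref{continuity} together with \eqref{eq:BFO1}; dominated convergence then yields continuity in $x$.

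The decay bounds I would obtain by exploiting the support of the kernel. In the good direction (say $x\ge 0$ for $\mathcal{L}_+$) the representation \eqref{forcing2} integrates $\mathcal{L}^0(\mathcal{I}_{-\lambda/5}g)(t,y)$ only over $y\ge x\ge 0$; since this factor decays faster than any power by Lemma~\ref{continuity} (reflecting Lemma~\ref{lem:decay}(i) for $B$) and the kernel $(y-x)^{\lambda-1}$ is integrable there for $\lambda\in(0,1]$, one gets $\langle x\rangle^{-m}$ for every $m$. In the bad direction ($x\le 0$ for $\mathcal{L}_+$) the rapid decay of $\mathcal{L}^0(\mathcal{I}_{-\lambda/5}g)(t,y)$ concentrates the $y$-integral near $y=O(1)$, where $(y-x)^{\lambda-1}\lesssim\langle x\rangle^{\lambda-1}$ because $\lambda-1\le 0$; integrating out the rapidly decaying factor then produces $\langle x\rangle^{\lambda-1}$. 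For $\lambda\in(0,1]$ this is immediate from \eqref{forcing2}--\eqref{forcing1}, and for the integer values $\lambda=0,-1,\dots,-4$ the identity \eqref{eq:relation} with Lemma~\ref{continuity} gives even rapid decay, which is stronger than the claimed $\langle x\rangle^{\lambda-1}$. The remaining range $\lambda\in(-4,0)\setminus\Z$ I would reach through the recursion $\mathcal{L}_\pm^{\lambda}g=\mp\,\partial_x\mathcal{L}_\pm^{\lambda+1}(\mathcal{I}_{1/5}g)$, which follows from $\frac{x_\mp^{\lambda-1}}{\Gamma(\lambda)}=\mp\,\partial_x\frac{x_\mp^{\lambda}}{\Gamma(\lambda+1)}$ and the semigroup law, lowering $\lambda$ by one into the already-settled range.

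The genuinely delicate point is precisely this last bad-direction estimate for non-integer $\lambda\in(-4,0)$. Naively estimating the regularized representation \eqref{classe22}--\eqref{classe12} fails, because its kernel $(y-x)^{\lambda+4}$ \emph{grows} like $\langle x\rangle^{\lambda+4}$ over the effective support and so cannot yield the claimed decay by absolute values alone; the required power $\langle x\rangle^{\lambda-1}$ lives in the cancellation hidden in the boundary terms of the integration by parts. I expect the clean route to be to run the recursion above while \emph{simultaneously} controlling the spatial derivatives of $\mathcal{L}_\pm^{\lambda}g$ (so that the extra $\partial_x$ genuinely improves $\langle x\rangle^{\lambda}$ to $\langle x\rangle^{\lambda-1}$), or alternatively to analyze the $x$-decay directly through the Mellin transform of $B$ in Lemma~\ref{mellin}, whose holomorphy in the relevant strip encodes exactly the exponent $\langle x\rangle^{\lambda-1}$.
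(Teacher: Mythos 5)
Your handling of \eqref{eq:relation}, of the continuity/jump statements, and of the decay bounds in the good direction and for $\lambda\in(0,1]$ in the bad direction is correct and matches what the paper does (the paper reads \eqref{eq:relation} off \eqref{classe22}--\eqref{classe12}, gets continuity from Lemma \ref{continuity}, and defers the decay details to Lemma 3.1 of \cite{Holmerkdv}); your care with the reflection sign in $x_-$ is warranted. But there is a genuine gap exactly at the point you flag yourself: the bad-direction bound $|\mathcal{L}_{\pm}^{\lambda}g(t,x)|\lesssim \langle x\rangle^{\lambda-1}$ for non-integer $\lambda\in(-4,0)$, which is the main content of the lemma beyond Lemma \ref{continuity}. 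Neither of your two proposed repairs closes it. The recursion route is circular: by the identity you wrote, $\partial_x\mathcal{L}_{\pm}^{\lambda+1}(\mathcal{I}_{1/5}g)=\mp\,\mathcal{L}_{\pm}^{\lambda}g$, so ``controlling the spatial derivative of $\mathcal{L}^{\lambda+1}$'' is verbatim the problem of controlling $\mathcal{L}^{\lambda}$, and no induction gets off the ground; moreover, moving the derivative onto the kernel-smoothed factor and estimating by absolute values, $\big|\frac{x_{\mp}^{\lambda}}{\Gamma(\lambda+1)}\ast\partial_x\mathcal{L}^0(\cdots)\big|$ only reproduces $\langle x\rangle^{\lambda}$, never $\langle x\rangle^{\lambda-1}$: differentiation does not improve spatial decay without a cancellation input. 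The Mellin route is left entirely undeveloped.

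The missing idea --- which is what Holmer's Lemma 3.1 actually carries out, and why the paper lists \eqref{eq:BFO1} among its ingredients --- is a vanishing-moment cancellation on the extended representation \eqref{classe22}/\eqref{classe12}, keeping \emph{both} the boundary term and the integral term. Write $G:=\mathcal{L}^0\big(\partial_t\mathcal{I}_{-\lambda/5}g\big)$ and, in the bad direction, split the $y$-integral into $|y|\ge|x|/2$ (rapidly decaying by Lemma \ref{continuity}) and $|y|\le|x|/2$, where one Taylor-expands $(y-x)^{\lambda+4}=|x|^{\lambda+4}\sum_{i=0}^{4}\binom{\lambda+4}{i}(y/|x|)^i+O\big(|x|^{\lambda+4}|y/x|^5\big)$. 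Pairing \eqref{eq:BFO1} with $y^i$ and integrating by parts (using the decay of $\partial_x^k\mathcal{L}^0$, $k\le 4$, from Lemma \ref{continuity}) gives $\int_{\R} G(t,y)\,dy=M\,\mathcal{I}_{-\frac45-\frac{\lambda}{5}}g(t)$ and $\int_{\R} y^iG(t,y)\,dy=0$ for $i=1,2,3,4$. The $i=0$ term therefore cancels exactly the boundary term $M\frac{x_{\mp}^{\lambda+4}}{\Gamma(\lambda+5)}\mathcal{I}_{-\frac45-\frac{\lambda}{5}}g(t)$, the terms $i=1,\dots,4$ drop out, and the remainder contributes $O\big(|x|^{\lambda+4-5}\big)=O\big(\langle x\rangle^{\lambda-1}\big)$, which is the claimed bound. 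Note that even the salvageable version of your recursion (estimating $\partial_x^j\mathcal{L}_{\pm}^{\mu}$ for $\mu\in(0,1]$ directly) needs precisely this vanishing-moment input, since the gain $\langle x\rangle^{-j}$ comes from the first $j$ moments of $\partial_y^j\mathcal{L}^0(\cdots)$ vanishing, not from the kernel. With this step supplied, your outline closes; without it, the lemma's key estimate is unproven.
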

\begin{proof}
\eqref{classe22} and \eqref{classe12} immediately show \eqref{eq:relation}. Moreover, Lemma \ref{continuity} guarantees the continuity (except for $x = 0$ when $\lambda = -4$) and discontinuity at $x =0$ of $\mathcal{L}_{\pm}^{\lambda}g$ for $\lambda \ge -4$ and $\lambda = -4$, respectively. The proof of decay bounds can be obtained by using \eqref{classe22}, \eqref{classe12}, \eqref{eq:BFO1} and Lemma \ref{continuity}. For the detailed argument, see Lemma 3.1 in \cite{Holmerkdv}.
\end{proof}

\begin{lemma}[Values of $\mathcal{L}_{+}^{\lambda}f(t,0)$ and $\mathcal{L}_{-}^{\lambda}f(t,0)$ ]\label{trace1}
	 For $\mbox{Re}\ \lambda>-4$,
	\begin{equation}\label{lr0}
	\mathcal{L}_{+}^{\lambda}f(t,0)=\frac{1}{B(0)\Gamma(4/5)}\frac{\cos\left(\frac{(1+4\lambda)\pi}{10}\right)}{5\sin\left(\frac{(1-\lambda)\pi}{5}\right)}f(t)
	\end{equation}
	and
	\begin{equation}\label{ll0}
\mathcal{L}_{-}^{\lambda}f(t,0)= \frac{1}{B(0)\Gamma(4/5)}\frac{\cos\left(\frac{(1-6\lambda)\pi}{10}\right)}{5\sin\left(\frac{(1-\lambda)\pi}{5}\right)}f(t)
	\end{equation}
	\end{lemma}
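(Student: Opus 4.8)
The plan is to prove \eqref{lr0} and \eqref{ll0} by a direct computation valid where the defining integrals \eqref{forcing2}, \eqref{forcing1} converge absolutely, and then to extend the resulting identities to the full range $\mathrm{Re}\,\lambda>-4$ by analytic continuation. I begin with the right half-line. Setting $x=0$ in \eqref{forcing2}, inserting the explicit kernel representation \eqref{forcing} of $\mathcal{L}^0$, and using the semigroup law $\mathcal{I}_{-\frac45}\mathcal{I}_{-\frac{\lambda}{5}}=\mathcal{I}_{-\frac{\lambda+4}{5}}$, I obtain
\[
\mathcal{L}_{+}^{\lambda}f(t,0)=\frac{M}{\Gamma(\lambda)}\int_0^{\infty}y^{\lambda-1}\int_0^t B\!\left(\frac{y}{(t-t')^{1/5}}\right)\frac{\mathcal{I}_{-\frac{\lambda+4}{5}}f(t')}{(t-t')^{1/5}}\,dt'\,dy .
\]
The interchange of the $y$- and $t'$-integrations is justified for $\mathrm{Re}\,\lambda>0$ by the rapid decay of $B$ at $+\infty$ (Lemma \ref{lem:decay}(i)), the integrability of $y^{\lambda-1}$ near $0$, and the smoothness of $\mathcal{I}_{-\frac{\lambda+4}{5}}f\in C_0^{\infty}(\R^+)$.

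After applying Fubini I substitute $u=y/(t-t')^{1/5}$ in the inner integral, which factors out $(t-t')^{\lambda/5}$ and leaves the Mellin transform $\int_0^{\infty}u^{\lambda-1}B(u)\,du$, evaluated by Lemma \ref{mellin}(i) as $\frac{\Gamma(\lambda)\Gamma(\frac15-\frac\lambda5)}{5\pi}\cos\!\left(\frac{(1+4\lambda)\pi}{10}\right)$. The remaining time integral has kernel $(t-t')^{\frac{\lambda+4}{5}-1}$, so by the definition \eqref{eq:IO} of the fractional integral and $\mathcal{I}_0f=f$ it equals
\[
\int_0^t (t-t')^{\frac{\lambda+4}{5}-1}\mathcal{I}_{-\frac{\lambda+4}{5}}f(t')\,dt'=\Gamma\!\left(\tfrac{\lambda+4}{5}\right)\mathcal{I}_{\frac{\lambda+4}{5}}\mathcal{I}_{-\frac{\lambda+4}{5}}f(t)=\Gamma\!\left(\tfrac{\lambda+4}{5}\right)f(t).
\]
Collecting constants, the $\Gamma(\lambda)$ cancels and I am left with the product $\Gamma(\frac15-\frac\lambda5)\Gamma(\frac{\lambda+4}{5})$; writing $z=\frac{1-\lambda}{5}$ so that $\frac15-\frac\lambda5=z$ and $\frac{\lambda+4}{5}=1-z$, the reflection formula $\Gamma(z)\Gamma(1-z)=\pi/\sin(\pi z)$ collapses this to $\pi/\sin\!\left(\frac{(1-\lambda)\pi}{5}\right)$. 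Recalling $M=1/(B(0)\Gamma(4/5))$ then gives exactly \eqref{lr0}. The left half-line computation is identical: setting $x=0$ in \eqref{forcing1} and substituting $y\mapsto-y$ turns the kernel into $B(-u)$, so I invoke Lemma \ref{mellin}(ii) in place of (i), which replaces $\cos\!\left(\frac{(1+4\lambda)\pi}{10}\right)$ by $\cos\!\left(\frac{(1-6\lambda)\pi}{10}\right)$ and yields \eqref{ll0}.

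The step I expect to be the main obstacle is the range of $\lambda$. For the right case Lemma \ref{mellin}(i) needs only $\mathrm{Re}\,\lambda>0$, but for the left case Lemma \ref{mellin}(ii) holds only on the narrow strip $0<\mathrm{Re}\,\lambda<\frac38$, because $B(-x)$ decays merely like $\langle x\rangle^{-3/8}$ (Lemma \ref{lem:decay}(ii)); thus the direct argument establishes the identities only on $\mathrm{Re}\,\lambda>0$, respectively on that strip. To reach $\mathrm{Re}\,\lambda>-4$ I would invoke the extended representations \eqref{classe22}, \eqref{classe12} together with the continuity of $\mathcal{L}_{\pm}^{\lambda}g$ at $x=0$ for $\lambda>-4$ (Lemma \ref{holmer1}) to argue that $\lambda\mapsto\mathcal{L}_{\pm}^{\lambda}f(t,0)$ is holomorphic on $\mathrm{Re}\,\lambda>-4$. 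The right-hand sides of \eqref{lr0}, \eqref{ll0} are meromorphic, and their only candidate poles—the zeros $\lambda=1-5n$ of $\sin\!\left(\frac{(1-\lambda)\pi}{5}\right)$—are cancelled by the simultaneous vanishing of the cosine numerators (as noted in the remark following the statement), so both sides are holomorphic on $\mathrm{Re}\,\lambda>-4$ and agree on an open set; the identity theorem then forces agreement throughout $\mathrm{Re}\,\lambda>-4$, completing the proof.
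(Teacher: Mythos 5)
Your proposal is correct and follows essentially the same route as the paper's proof: a Fubini/change-of-variables computation reducing the trace at $x=0$ to the Mellin transform of $B$ (Lemma \ref{mellin}), combined with the semigroup law for $\mathcal{I}_{\alpha}$ and the reflection formula $\Gamma(z)\Gamma(1-z)=\pi/\sin(\pi z)$, valid for positive real $\lambda$ (resp.\ $0<\lambda<\tfrac38$ for \eqref{ll0}), then extended to $\mathrm{Re}\,\lambda>-4$ by analyticity of $\lambda\mapsto\mathcal{L}_{\pm}^{\lambda}f(t,0)$ coming from the representations \eqref{classe22}, \eqref{classe12}. The only difference is order of presentation: the paper records the analyticity first and then computes, while you compute first and invoke the identity theorem afterwards, with a slightly more explicit discussion of the pole cancellation on the right-hand side.
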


\begin{proof}
We only prove \eqref{lr0}, since the proof of \eqref{ll0} is similar. We use the formula \eqref{classe22} to have
	\[\mathcal{L}_{+}^{\lambda}f(t,0)
	=-\int_{0}^{\infty}\frac{y^{(\lambda+5)-1}}{\Gamma(\lambda+5)}\mathcal{L}^0\big(\pt \mathcal{I}_{-\frac{\lambda}{5}}f\big)(t,y)dy.\]
It shows	 $	\mathcal{L}_{+}^{\lambda}f(t,0)$ (similarly $	\mathcal{L}_{-}^{\lambda}f(t,0)$ ) is analytic in $\lambda$ for $\mbox{Re}\ \lambda>-4$.

By analyticity argument, it suffices to consider the case when $\lambda$ is a positive real number ($\lambda$ is in $(0,3/8)$ for \eqref{ll0}, in order to use \eqref{mellin1}) and \eqref{forcing} (where $M$ is defined as in \eqref{eq:M}). For the computation we use the representation \eqref{forcing2} for $\lambda > 0$. Fubini Theorem, the change of variable, \eqref{mellin2} and \eqref{eq:IO}, we have 
	\[\begin{split}
	\mathcal{L}_{+}^{\lambda}f(t,0)&=\frac{M}{\Gamma(\lambda)}\int_0^{\infty}y^{\lambda-1}\int_0^t B\left(\frac{y}{(t-t')^{1/5}}\right)\frac{\mathcal{I}_{\frac{-\lambda - 4}{5}}f(t')}{(t-t')^{1/5}}\;dt'dy\\
	&=\frac{M}{\Gamma(\lambda)}\int_0^t (t-t')^{\frac{\lambda + 4}{5}-1}\mathcal{I}_{\frac{-\lambda - 4}{5}}f(t') \int_0^{\infty} y^{\lambda-1}B(y) \; dy dt'\\
	&=\frac{M}{\Gamma(\lambda)}\Gamma\left(\frac{\lambda}{5} + \frac45\right)f(t)\frac{\Gamma(\lambda)\Gamma(\frac15-\frac{\lambda}{5})}{5 \pi}\cos \left(\frac{(1+4\lambda)\pi}{10}\right)\\
	&=\frac{M\cos\left(\frac{(1+4\lambda)\pi}{10}\right)}{5\sin\left(\frac{(1-\lambda)\pi}{5}\right)}f(t).
	\end{split}\]
The last equality is valid thanks to $\Gamma(z)\Gamma(1-z)=\frac{\pi}{\sin\ \pi z}$.
	\end{proof}

\subsection{Linear version}\label{section4}
We consider the linear Kawahara equation
\begin{equation}\label{eq:lin. 5kdv}
\partial_t u -\partial_x^5u =0.
\end{equation}
We define the unitary group associated to \eqref{eq:lin. 5kdv} as
\begin{equation*}
e^{t\partial_x^5}\phi(x)=\frac{1}{2\pi}\int e^{ix\xi}e^{it\xi^5}\hat{\phi}(\xi)d\xi,
\end{equation*}
which allows 
\begin{equation}\label{linear}
\begin{cases}
(\partial_t-\partial_x^5)e^{t\partial_x^5}\phi(x) =0,& (t,x)\in\mathbb{R}\times\mathbb{R},\\
e^{t\partial_x^5}\phi(x)\big|_{t=0}=\phi(x),& x\in\mathbb{R}.
\end{cases}
\end{equation}
Recall $\mathcal{L}_{+}^{\lambda}$ in \eqref{classe22} for the right half-line problem. Let
\[u(t,x)= \mathcal{L}_+^{\lambda_1}\gamma_1(t,x)+\mathcal{L}_+^{\lambda_2}\gamma_2(t,x),\]
where $\gamma_j$ ($j=1,2$) will be chosen later in terms of given boundary data $f$ and $g$.

Let $a_j$ and $b_j$ be constants depending on $\lambda_j$, $j=1,2$, given by
\begin{equation}\label{eq:entries}
a_j = \frac{1}{B(0)\Gamma\left(\frac45\right)}\frac{\cos\left(\frac{(1+4\lambda_j)\pi}{10}\right)}{5\sin\left(\frac{(1-\lambda_j)\pi}{5}\right)}\quad \text{and}\quad b_j = \frac{1}{B(0)\Gamma\left(\frac45\right)}\frac{\cos\left(\frac{(4\lambda_j-3)\pi}{10}\right)}{5\sin\left(\frac{(2-\lambda_j)\pi}{5}\right)}.
\end{equation}
By Lemmas \ref{holmer1} and \ref{trace1}, we get
\begin{equation}\label{eq:f0}
f(t)= u(t,0) = a_1\gamma_1(t)+ a_2\gamma_2(t)
\end{equation}
and
\begin{equation}\label{eq:g0}
g(t)=\partial_x u(t,0) =b_1\mathcal{I}_{-\frac15}\gamma_1(t)+b_2\mathcal{I}_{-\frac15}\gamma_2(t).
\end{equation}
Together with \eqref{eq:f0} and \eqref{eq:g0}, we can write as a matrix form 
\[ \left[\begin{array}{c}
f(t) \\
\mathcal{I}_{\frac15}g(t)  \end{array} \right]=
A
 \left[\begin{array}{c}
\gamma_1(t)\\
\gamma_2(t)   \end{array} \right], \]
where
\[A(\lambda_1,\lambda_2)=
\left[\begin{array}{cc}
a_1 & a_2 \\
	b_1 & b_2 \end{array} \right].\]
We choose an appropriate $\lambda_j$, $j=1,2$, such that $A$ is invertible, and hence $u$ solves
\begin{equation}\label{forçante00}
\begin{cases}
(\partial_t-\partial_x^5)u= 0,\\
u(0,x) = 0,\\
u(t,0) = f(t), \; \px u(t,0) = g(t).
\end{cases}
\end{equation}
Thus, from \eqref{linear} and \eqref{forçante00}, we can construct a solution to linear Kawahara equation \eqref{eq:lin. 5kdv} posed on the right half-line.

Similarly, we can construct  a solution to linear Kawahara equation \eqref{eq:lin. 5kdv} posed on the left half-line by using three boundary conditions, see Section \ref{sec:main proof 2} for more details.

\subsection{Nonlinear version}\label{section4-1}

We define the Duhamel inhomogeneous solution operator $\mathcal{D}$ as
\begin{equation*}
\mathcal{D}w(t,x)=\int_0^te^{(t-t')\partial_x^5}w(t',x)dt',
\end{equation*}
it follows that
\[\left \{
\begin{array}{l}
(\partial_t-\partial_x^5)\mathcal{D}w(t,x) =w(t,x),\ (t,x)\in\mathbb{R}\times\mathbb{R},\\
\mathcal{D}w(x,0) =0,\ x\in\mathbb{R}.
\end{array}
\right.\]

Let 
\[u(t,x)= \mathcal{L}_+^{\lambda_1}\gamma_1(t,x)+\mathcal{L}_+^{\lambda_2}\gamma_2(t,x)+ e^{t\px^5}\phi(x) + \mathcal{D}w.\]
Similarly as in Subsection \ref{section4}, but $\gamma_1$ and $\gamma_2$ are dependent on not only $f$ and $g$, but also $e^{t\px^5}\phi(x)$ and $\mathcal{D}w$, we see that $u$ solves
\[\begin{cases}
(\partial_t-\partial_x^5)u= w,\\
u(0,x) = \phi(x),\\
u(t,0) = f(t), \; \px u(t,0) = g(t).
\end{cases}\]
Similarly, we can construct a solution to nonlinear Kawahara equation posed on the left half-line. See Sections  \ref{sec:main proof 1} and \ref{sec:main proof 2} for more details.

\section{Energy estimates}\label{sec:energy}
In this section, we are going to prove the energy estimate in the solution space defined as in Subsection \ref{sec:sol space}.

\begin{lemma}\label{grupo}
	Let $s\in\mathbb{R}$ and $b, \alpha \in \R$. If $\phi\in H^s(\mathbb{R})$, then
	\begin{itemize}
		\item[(a)] \emph{(Space traces)} $\|\psi(t)e^{t\partial_x^5}\phi(x)\|_{C\big(\mathbb{R}_t;\,H_x^s(\mathbb{R})\big)}\lesssim \|\phi\|_{H^s(\mathbb{R})}$;
		\item[(b)] \emph{((Derivatives) Time traces)} 
		\[
		\|\psi(t) \partial_x^{j}e^{t\partial_x^5}\phi(x)\|_{C_x(\mathbb{R};H_t^{\frac{s+2-j}{5}}(\mathbb{R}))}\lesssim_{\psi, s, j} \|\phi\|_{H^s(\mathbb{R})}, \quad j\in\{0,1,2\};
		\]
		\item [(c)] \emph{(Bourgain spaces)} $\|\psi(t)e^{t\partial_x^5}\phi(x)\|_{X^{s,b}\cap D^{\alpha}}\lesssim_{\psi, b, \alpha}  \|\phi\|_{H^s(\mathbb{R})}$.
	\end{itemize}
\end{lemma}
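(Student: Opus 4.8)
The plan is to establish the three estimates (a), (b), (c) by reducing each to a standard Fourier-analytic computation for the unitary group $e^{t\partial_x^5}$, exploiting the explicit Fourier representation
\[
e^{t\partial_x^5}\phi(x) = \frac{1}{2\pi}\int_{\R} e^{ix\xi}e^{it\xi^5}\wh{\phi}(\xi)\,d\xi.
\]
These are the linear ``space trace'', ``time trace'' and ``Bourgain space'' estimates that are completely analogous to the corresponding statements for the Airy group in the KdV theory (cf. \cite{CK, Holmerkdv}); the role of the phase $\xi^3$ there is played here by $\xi^5$, so the whole argument transfers with the only change being in the exponents of the change of variables.

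For part (c) I would start, since it is the cleanest. By Lemma \ref{lem:Xsb} the multiplication by $\psi(t)$ is bounded on $X^{s,b}$, so it suffices to control $\|\psi(t)e^{t\partial_x^5}\phi\|_{X^{s,b}}$. Taking the space-time Fourier transform of $\psi(t)e^{t\partial_x^5}\phi(x)$ gives $\wh{\psi}(\tau-\xi^5)\wh{\phi}(\xi)$; plugging this into the definition of the $X^{s,b}$ norm, the weight $\bra{\tau-\xi^5}^{2b}$ collapses onto $\bra{\tau-\xi^5}^{2b}|\wh{\psi}(\tau-\xi^5)|^2$, whose $\tau$-integral is a finite constant depending on $\psi$ and $b$ because $\psi$ is Schwartz, leaving exactly $\int \bra{\xi}^{2s}|\wh{\phi}(\xi)|^2\,d\xi = \|\phi\|_{H^s}^2$. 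The $D^{\alpha}$ part is handled identically after inserting the low-frequency cutoff $\mathbf{1}_{\{|\xi|\le 1\}}$, on which $\bra{\tau}^{2\alpha}$ and $\bra{\tau-\xi^5}^{2\alpha}$ are comparable, so it is dominated by $\|P_0\phi\|_{L^2}\le\|\phi\|_{H^s}$.

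For part (a), since $e^{t\partial_x^5}$ is a unitary group on $H^s(\R)$ for each fixed $t$, one has $\|e^{t\partial_x^5}\phi\|_{H^s}=\|\phi\|_{H^s}$ directly from Plancherel (the multiplier $e^{it\xi^5}$ has modulus one), and the cutoff $\psi$ only improves the bound; the continuity in $t$ into $H^s_x$ follows from dominated convergence applied to the Fourier integral. Part (b) is the substantive one and the main obstacle: I would fix $x$, regard $t\mapsto \psi(t)\partial_x^j e^{t\partial_x^5}\phi(x)$ as a function of $t$, and compute its temporal Fourier transform. Writing out $\partial_x^j e^{t\partial_x^5}\phi(x)=\frac{1}{2\pi}\int (i\xi)^j e^{ix\xi}e^{it\xi^5}\wh{\phi}(\xi)\,d\xi$ and making the change of variables $\tau=\xi^5$ (so $d\tau = 5\xi^4\,d\xi$, i.e. $d\xi = \frac15\tau^{-4/5}\,d\tau$ on each sign-branch), the time Fourier transform becomes an integral against $\delta(\tau-\xi^5)$, producing a Jacobian factor $|\xi|^{-4}\sim |\tau|^{-4/5}$.

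The heart of part (b) is then the bookkeeping of powers of $\xi$: the $H_t^{(s+2-j)/5}$ norm supplies a weight $\bra{\tau}^{(s+2-j)/5}\sim \bra{\xi}^{s+2-j}$, the derivative $\partial_x^j$ contributes $|\xi|^j$, and the Jacobian contributes $|\xi|^{-4}$ but gets squared-rooted after the $\delta$-substitution so that the net effect reproduces exactly the weight $\bra{\xi}^{s}$ needed to recover $\|\phi\|_{H^s}$. Precisely because the chosen regularity index $(s+2-j)/5$ is calibrated to the fifth-order dispersion, these exponents balance for $j\in\{0,1,2\}$; this is the content of the smoothing estimate \eqref{eq:lsm} quoted in the introduction, and it is the reason only $j\le 2$ is admissible. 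The low-frequency region $|\xi|\lesssim 1$, where $\tau^{-4/5}$ is singular, must be treated separately and is handled by noting that there the time integral of $\psi(t)e^{it\xi^5}$ is bounded uniformly by smoothness of $\psi$, so no singularity from the Jacobian survives; the supremum over $x$ is then immediate since $x$ enters only through the unimodular factor $e^{ix\xi}$. I expect the delicate point to be justifying the change of contour / change of variables near $\xi=0$ and verifying the uniform-in-$x$ continuity claim, both of which follow the KdV template in \cite{Holmerkdv} with the cubic phase replaced by the quintic one.
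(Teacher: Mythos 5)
Your proposal is correct and follows essentially the same route as the paper, which simply declares (a) and (c) standard and notes that (b) follows from the smoothness of $\psi$ together with the local smoothing estimate \eqref{eq:lsm}, omitting all details. Your outline fills in exactly those steps: the direct Fourier computation with $\wh{\psi}(\tau-\xi^5)\wh{\phi}(\xi)$ for (c), unitarity for (a), and for (b) the change of variables $\eta=\xi^5$ whose Jacobian balances the weight $\bra{\tau}^{(s+2-j)/5}$ (this is precisely the Kenig--Ponce--Vega smoothing estimate), with the low-frequency region $|\xi|\lesssim 1$ handled separately via the rapid decay of $\wh{\psi}$.
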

\begin{proof}
The proofs of (a) and (c) are standard and the proof of (b) follows from the smoothness of $\psi$ and the local smoothing estimate \eqref{eq:lsm}. We omit the details.
\end{proof}

\begin{lemma}\label{duhamel}
Let $s\in \mathbb{R}$. For $0 < b < \frac12 < \alpha < 1-b$, we have
	\begin{itemize}
		\item[(a)] \emph{(Space traces)} 
		\begin{equation*}
		\|\psi(t)\mathcal{D}w(t,x)\|_{C\big(\mathbb{R}_t;\,H^s(\mathbb{R}_x)\big)}\lesssim \|w\|_{X^{s,-b}};
		\end{equation*}
		\item[(b)] \emph{((Derivatives) Time traces)}
		\begin{equation*}
		\|\psi(t) \partial_x^j\mathcal{D}w(t,x)\|_{C(\mathbb{R}_x;H^{\frac{s+2-j}{5}}(\mathbb{R}_t))}\lesssim	\|w\|_{X^{s,-b}}+\|w\|_{Y^{s,-b}};
		\end{equation*}
		\item[(c)] \emph{(Bourgain spaces estimates)} 
		\begin{equation*}
		\|\psi(t)\mathcal{D}w(t,x)\|_{X^{s,b} \cap D^{\alpha}}\lesssim  \|w\|_{X^{s,-b}}.
		\end{equation*}
	\end{itemize}
\end{lemma}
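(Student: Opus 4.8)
The plan is to estimate the Duhamel inhomogeneous solution operator $\D w(t,x) = \int_0^t e^{(t-t')\px^5}w(t',x)\,dt'$ by first reducing it to a superposition of free evolutions, so that the results for the free group in Lemma \ref{grupo} can be reused. The standard device is to write $\D w$ via the Fourier transform in time and space, splitting the multiplier $\frac{1}{i(\tau-\xi^5)}(e^{it\tau} - e^{it\xi^5})$ into a \emph{near-resonant} piece where $|\tau - \xi^5| \lesssim 1$ and a \emph{far} piece where $|\tau-\xi^5| \gtrsim 1$. On the far piece one expands the exponential difference and integrates, producing a convergent series in which each term looks like $\psi(t)\px^j e^{t\px^5}$ applied to a function whose $H^s$-norm is controlled by $\norm{w}_{X^{s,-b}}$ (the gain of $\bra{\tau-\xi^5}^{-b}$ from the denominator is what makes this summable when paired with the $\bra{\tau-\xi^5}^{b}$ that $X^{s,-b}$ lacks); on the near piece one simply bounds the time integral directly since the modulation is $O(1)$.

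First I would prove (a) and (c) together, since for the Bourgain-space and space-trace bounds the modulation denominator is benign: the operator $\D$ loses at most $\bra{\tau-\xi^5}^{-1}$, and since $0 < b < \frac12 < \alpha < 1-b$ we have $-b + (-b) \le -1$ is \emph{not} what we need — rather the point is that multiplication by $\bra{\tau-\xi^5}^{b-1}$ maps $L^2$-in-modulation to itself with the weight $\bra{\tau-\xi^5}^{b}$, giving exactly $\norm{w}_{X^{s,-b}}$ on the right. For the $D^\alpha$ component I would restrict to $|\xi|\le 1$ and note that there $\bra{\tau}\sim\bra{\tau-\xi^5}$ up to an $O(1)$ shift, so the $D^\alpha$ estimate collapses to a one-dimensional modulation estimate with $\alpha < 1-b$ ensuring $\alpha + (-b) < 1$, making the relevant integral converge. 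The cutoff $\psi(t)$ is absorbed using Lemma \ref{lem:Xsb} for the Bourgain norms and the Schwartz decay of $\psi$ for the continuity-in-$x$ statement.

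The genuinely delicate part is (b), the derivative time-trace estimate, and this is where the auxiliary norm $Y^{s,b}$ is forced upon us. For $\px^j\D w$ with $j\in\{0,1,2\}$ I would take the spatial Fourier transform and measure $H^{\frac{s+2-j}{5}}_t$ of the trace at each fixed $x$; the local smoothing heuristic \eqref{eq:lsm} suggests the gain of two spatial derivatives converts to $\bra{\tau}^{\frac{s+2-j}{5}}$ in time, which for the free evolution matches $\bra{\xi}^s$ via the dispersion relation $\tau\sim\xi^5$. The obstruction is that on the near-resonant set $|\tau-\xi^5|\lesssim 1$ the identification $\bra{\tau}\sim\bra{\xi}^5$ is accurate and $X^{s,-b}$ suffices, but on the far set the time-frequency $\tau$ and the spatial weight $\bra{\xi}^s$ decouple, and one cannot recover $\bra{\tau}^{\frac{s+2-j}{5}}$ from $\bra{\xi}^s\bra{\tau-\xi^5}^{-b}$ alone — precisely here one must instead bound by the time-adapted norm $\norm{w}_{Y^{s,-b}}$, whose weight $\bra{\tau}^{\frac{2s}{5}}$ directly supplies the missing time regularity. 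Hence the right-hand side must contain both $\norm{w}_{X^{s,-b}}$ and $\norm{w}_{Y^{s,-b}}$.

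I expect the main obstacle to be the careful bookkeeping on the far/near modulation decomposition in part (b): one must verify that the series expansion of the exponential difference converges in the relevant Sobolev-in-time norm uniformly in $x$, and that the derivative count $j\le 2$ is exactly what the local smoothing estimate \eqref{eq:lsm} permits (the cases $j=3,4$ would fail, which is the structural reason only two, respectively three, boundary conditions are imposed in \eqref{kawahararight} and \eqref{kawaharaleft}). The constraint $\alpha < 1-b$ is what keeps the $D^\alpha$ contribution in (c) finite, and $b<\frac12$ is what ultimately permits the boundary-forcing construction; I would track these inequalities explicitly throughout rather than at the end. The remaining steps — summing the dyadic pieces via \eqref{eq:dyadic X} and removing the cutoff — are routine once the modulation split is in place.
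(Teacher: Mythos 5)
Your high-level architecture matches the paper's: split $w$ by modulation size, reduce one piece to a superposition of free evolutions so that Lemma \ref{grupo} applies, treat the other piece by direct Fourier analysis, and invoke the $Y^{s,-b}$ norm in part (b) where the $X^{s,-b}$ weight cannot produce the time weight $\bra{\tau}^{\frac{s+2-j}{5}}$. However, you have assigned the two techniques to the wrong pieces, and this is not a cosmetic slip: it breaks the proof. The power-series device works only on the \emph{near-resonant} piece $w_1$ (where $\eta_0(\tau-\xi^5)$ localizes to $|\tau-\xi^5|\lesssim 1$): there $e^{it(\tau'-\xi^5)}-1=\sum_{k\ge1}\frac{(it)^k(\tau'-\xi^5)^k}{k!}$ cancels the singular denominator, each coefficient $\wh{F}_1^k(\xi)=\int \wt{w}_1(\tau,\xi)(\tau-\xi^5)^{k-1}\,d\tau$ obeys $\|F_1^k\|_{H^s}\lesssim\|w\|_{X^{s,-b}}$ by Cauchy--Schwarz over the unit modulation set, and the series converges because of the factor $1/k!$ --- not, as you claim, because a gain of $\bra{\tau-\xi^5}^{-b}$ ``makes this summable''; no such gain drives the summation. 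If you instead Taylor-expand on the far piece $|\tau-\xi^5|\gtrsim1$, as you propose, the individual terms $\int \wt{w}(\tau,\xi)(\tau-\xi^5)^{k-1}\,d\tau$ are not even finite for general $w\in X^{s,-b}$, and no choice of $b$ rescues the series.

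Conversely, on the far piece the correct move is the opposite of an expansion: precisely because $|\tau'-\xi^5|\ge1$, one may split $\frac{e^{it\tau'}-e^{it\xi^5}}{i(\tau'-\xi^5)}$ into its two separate terms. The $e^{it\xi^5}$ term is a free evolution applied to $W$ with $\wh{W}(\xi)=\int\frac{\wt{w}_2(\tau,\xi)}{i(\tau-\xi^5)}\,d\tau$, which is finite exactly because $b<\frac12$, and Lemma \ref{grupo} disposes of it. The $e^{it\tau'}$ term is where essentially all of the labor in part (b) lives: after convolving with $\wh{\psi}$ one must run the case analysis comparing $|\tau|$, $|\xi|^5$ and $|\tau'-\xi^5|$ (Hardy--Littlewood maximal function when $|\tau-\tau'|<1$, the rapid decay of $\wh{\psi}$ otherwise), and it is exactly in this analysis --- small spatial frequency with large time frequency, where the integrability condition \eqref{eq:restriction} fails for $\frac{s+2-j}{5}\le 0$ or exceeds $\frac12$ --- that the $Y^{s,-b}$ norm is genuinely forced, not by the vague ``decoupling on the far set'' you describe. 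Your plan assigns only ``direct bounding'' to the near piece, where the multiplier $\frac{e^{it\mu}-1}{i\mu}$ is bounded and little happens, and therefore never confronts this term; as written it contains no mechanism for proving (b), nor the corresponding contribution to (c). The repair is to swap the roles: expand on $w_1$, split and carry out the hard direct analysis on $w_2$.
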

Remark in (b) that $\|\psi(t) \partial_x^j\mathcal{D}w(t,x)\|_{C(\mathbb{R}_x;H_0^{\frac{s+2-j}{5}}(\mathbb{R}_t^+))}$ has same bound for $s < \frac{11}{2} + j$. 

\begin{proof}
The idea of the proof of Lemma \ref{duhamel} follows Section 5 in \cite{CK}. Here we give the details for the sake of reader's convenience. 

\textbf{(a).} A direct calculate gives
\begin{equation}\label{eq:duhamel fourier(a)}
\psi(t)\mathcal{D}w(t,x) = c\int e^{ix\xi}e^{it\xi^5}\psi(t) \int \wt{w}(\tau',\xi) \frac{e^{it(\tau'-\xi^5) }-1}{i(\tau' - \xi^5)} \; d\tau'd\xi.
\end{equation}
We denote by $w = w_1 + w_2$, where 
\[\wt{w}_1(\tau,\xi) = \eta_0(\tau-\xi^5)\wt{w}(\tau,\xi),\]
where $\eta_0$ is defined in Subsection \ref{sec:sol space}.

For $w_1$, we use the  Taylor expansion of $e^x$ at $x =0$. Then, we can rewrite \eqref{eq:duhamel fourier(a)} for $w_1$ as
\[
\begin{aligned}
\psi(t)\mathcal{D}w(t,x) &= c\int e^{ix\xi}e^{it\xi^5}\psi(t) \int \wt{w}_1(\tau',\xi) \frac{e^{it(\tau'-\xi^5) }- 1}{i(\tau' - \xi^5)} \; d\tau'd\xi\\
&=c\sum_{k=1}^{\infty}\frac{i^{k-1}}{k!}\psi^k(t) \int e^{ix\xi}e^{it\xi^5}\wh{F}_1^k(\xi)\; d\xi\\
&=c\sum_{k=1}^{\infty}\frac{i^{k-1}}{k!}\psi^k(t)e^{t\partial_x^5}F_1^k(x),
\end{aligned}
\]
where $\psi^k(t) = t^k\psi(t)$ and 
\begin{equation}\label{eq:linear estimate0}
\wh{F}_1^k(\xi) = \int \wt{w}_1(\tau,\xi) (\tau-\xi^5)^{k-1} \; d\tau.
\end{equation}
Since
\begin{equation}\label{eq:linear estimate}
\norm{F_1^k}_{H^s} = \left(\int \bra{\xi}^{2s} \left| \int \wt{w}_1(\tau,\xi) (\tau-\xi^5)^{k-1} \; d\tau \right|^2 \; d\xi \right)^{\frac12} \lesssim \norm{w}_{X^{s,-b}},
\end{equation}
we have from Lemma \ref{grupo} (a) that
\[
\norm{\psi(t) \mathcal{D}w(t,x)}_{C_tH^s} \lesssim \sum_{k=1}^{\infty}\frac{1}{k!}\norm{F_1^k}_{H_x^s} \lesssim \norm{w}_{X^{s,-b}}.
\]
For $w_2$, a direct calculation gives
\begin{equation}\label{eq:duhamel fourier}
\ft[\psi\mathcal{D}w](\tau,\xi) = c\int \wt{w}_2(\tau',\xi) \frac{\wh{\psi}(\tau-\tau') - \wh{\psi}(\tau - \xi^5)}{i(\tau' - \xi^5)} \; d\tau'.
\end{equation}
Since $\norm{\psi \mathcal{D}w}_{C_tH^s} \lesssim \norm{\bra{\xi}^s\ft[\psi\mathcal{D}w](\tau,\xi)}_{L_{\xi}^2L_{\tau}^1}$, it suffices to control
\begin{equation}\label{eq:a.1}
\left(\int \bra{\xi}^{2s}\left|\int |\wt{w}_2(\tau',\xi)| \int \frac{|\wh{\psi}(\tau-\tau') - \wh{\psi}(\tau - \xi^5)|}{|\tau' - \xi^5|} \; d\tau d\tau' \right|^2 \; d\xi \right)^{\frac12},
\end{equation}
due to \eqref{eq:duhamel fourier}. We use the $L^1$ integrability of $\wh{\psi}$, so that
\[
\eqref{eq:a.1} \lesssim \left(\int \bra{\xi}^{2s}\left|\int_{|\tau'-\xi^5| > 1} \frac{|\wt{w}_2(\tau',\xi)|}{|\tau' - \xi^5|} d\tau' \right|^2 \; d\xi \right)^{\frac12} \lesssim \norm{w}_{X^{s,-b}}.
\]

\textbf{(b).}  Similarly as \eqref{eq:duhamel fourier(a)}, a direct calculate gives
\begin{equation}\label{eq:duhamel fourier(b)}
\psi(t)\partial_x^j\mathcal{D}w(t,x) = c\int e^{ix\xi}e^{it\xi^5}(i\xi)^j\psi(t) \int \wt{w}(\tau',\xi) \frac{e^{it(\tau'-\xi^5) }-1}{i(\tau' - \xi^5)} \; d\tau'd\xi.
\end{equation}
We split $w = w_1 + w_2$ as in the proof of (a). For $w_1$, similarly, but we use Lemma \ref{grupo} (b) to obtain that
\[
\norm{\psi\partial_x^j\mathcal{D}w}_{L_x^{\infty}H_t^{\frac{s+2-j}{5}}} \lesssim \norm{w}_{X^{s,-b}}.
\]

For $w_2$, recall \eqref{eq:duhamel fourier(b)}
\[
\begin{aligned}
\psi\partial_x^j\mathcal{D}w(t,x) &= c\int e^{ix\xi}e^{it\xi^5}(i\xi)^j\psi(t) \int \frac{\wt{w}(\tau',\xi)}{i(\tau' - \xi^5)} \left(e^{it(\tau'-\xi^5) }- 1\right) \; d\tau'd\xi\\
&= I -II.
\end{aligned}
\]
We first consider $II$. Let 
\[\wh{W}(\xi) = \int \frac{\wt{w}_2(\tau,\xi)}{i(\tau-\xi^5)} \; d\tau.\]
Note that 
\begin{equation}\label{eq:linear estimate2}
\norm{W}_{H^s} \lesssim \norm{w_2}_{X^{s,-b}},
\end{equation}
for $b < \frac12$. Then, it immediately follows from 
\[II = \psi(t) \partial_x^j e^{t\partial_x^5}W(x)\]
and Lemma \ref{grupo} (b) that
\[\norm{\psi \partial_x^j e^{t\partial_x^5}W}_{C_xH_t^{\frac{s+2-j}{5}}} \lesssim \norm{W}_{H^s} \lesssim \norm{w}_{X^{s,-b}}.\]

Now it remains to deal with $I$. Taking the Fourier transform to $I$ with respect to $t$ variable, we have
\[
\int e^{ix\xi} (i\xi)^j \int \frac{\wt{w}_2(\tau',\xi)}{i(\tau'-\xi^5)} \wh{\psi}(\tau-\tau')\; d\tau'd\xi,
\]
and hence it suffices to control
\begin{equation}\label{eq:b.8}
\left( \int \bra{\tau}^{\frac{2(s+2-j)}{5}} \left| \int e^{ix\xi} (i\xi)^j \int \frac{\wt{w}_2(\tau',\xi)}{i(\tau'-\xi^5)} \wh{\psi}(\tau-\tau')\; d\tau'd\xi \right|^2 \; d\tau \right)^{\frac12}.
\end{equation} 
We first split the region in $\tau$ as follows:
\[\mathbf{Case\; I. }\; |\tau| \le 1, \qquad \mathbf{Case\; II. }\; 1 < |\tau|.\]

\textbf{Case I.} $|\tau| \le 1$. In this case, the weight $\bra{\tau}^{\frac{2(s+2-j)}{5}}$ and the integration with respect to $\tau$ can be negligible. If $|\xi|^5 \le 1$, the weight $|\xi|^{j}$ and the integration with respect to $\xi$ are negligible as well. Then, the Cauchy-Schwarz inequality gives
\[\eqref{eq:b.8} \lesssim \norm{w_2}_{X^{s,-b}},\]
for $b < \frac12$. When $1<|\xi|^5$, we split the region in $\tau'$ similarly as 
\[\mathbf{I. }\; |\tau'-\xi^5| < \frac12|\xi|^5, \qquad \mathbf{II. }\; 2|\xi|^5 < |\tau'-\xi^5|, \qquad \mathbf{III. }\; \frac12|\xi|^5 \le |\tau' - \xi^5| \le 2|\xi|^5.\]
For \textbf{I} ($|\tau'-\xi^5| < \frac12|\xi|^5$), we can perform the integration with respect to $\xi$ in \eqref{eq:b.8} by using $|\wh{\psi}(\tau - \tau')| \lesssim |\xi|^{-5k}$, while we use $|\wh{\psi}(\tau - \tau')| \lesssim |\tau' - \xi^5|^{-k}$ in the case \textbf{II} ($2|\xi|^5 < |\tau'-\xi^5|$) for large $k \gg 1$. Hence, we have for both cases that
\[\eqref{eq:b.8} \lesssim \norm{w_2}_{X^{s,-b}}.\]

The case \textbf{III} ($\frac12|\xi|^5 < |\tau'-\xi^5| < 2|\xi|^5$) is more complicated. Since $|\tau| \le 1$, this case is equivalent to the case when $\frac12|\tau-\xi^5| < |\tau'-\xi^5| < 2|\tau-\xi^5|$. If $(\tau'-\xi^5)\cdot(\tau-\xi^5)<0$, by using the facts that $|\wh{\psi}(\tau - \tau')| \lesssim |\xi|^{-5k}$ and
\[ \int_{|\tau'-\xi^5| \sim |\xi|^5} 1 \; d\tau' \lesssim |\xi|^{5},\]
we obtain
\[\eqref{eq:b.8} \lesssim \norm{w_2}_{X^{s,-b}}.\]

Otherwise ($(\tau'-\xi^5)\cdot(\tau-\xi^5) > 0$), we further divide the case into $|(\tau-\xi^5) - (\tau'-\xi^5)| < 1$ and $|(\tau-\xi^5) - (\tau'-\xi^5)| > 1$. For the former case, let 
\begin{equation}\label{eq:b.6}
\Phi(\tau',\xi) = |\xi|^s|\tau'-\xi^5|^{-b}\wt{w}_2(\tau',\xi).
\end{equation}
We note that $\norm{\Phi}_{L^2} \sim \norm{w_2}_{X^{s,-b}}$. Since $|\wh{\psi}(\tau - \tau') | \lesssim 1$, we have
\[
\left|\int_{|\tau' - \tau| < 1}|\xi|^s|\tau'-\xi^5|^{-b}\wt{w}_2(\tau',\xi)\wh{\psi}(\tau' - \tau) \; d\tau' \right| \lesssim \left|\int_{|\eta - \tau| < 1} \Phi(\tau',\xi) \; d\tau' \right| \lesssim M\Phi(\tau,\xi),
\]
where $Mf(x)$ is the Hardy-Littlewood maximal function of $f$. Note that $\norm{Mf}_{L^p} \lesssim \norm{f}_{L^p}$ for $1< p \le \infty$ (see, in particular, \cite{Stein1993}).
Since
\begin{equation}\label{eq:restriction}
\int_{|\xi| > 1} |\xi|^{2(j-s-5+5b)} \; d\xi < \infty, \qquad \frac{s+2-j}{5} > 0,
\end{equation}
we have
\[
\eqref{eq:b.8} \lesssim \left(\int |M\Phi(\tau, \xi)|^2 \; d\tau d\xi \right)^{\frac12} \lesssim \norm{\Phi}_{L^2} \lesssim \norm{w_2}_{X^{s,-b}}.
\]
For the latter case, the integration region in $\tau'$ can be reduced to $\tau-\xi^5 + 1 < \tau' -\xi^5 < 2(\tau -\xi^5)$ for positive $\tau - \xi^5$ and $\tau'-\xi^5$, since the exact same argument can be applied to the other regions.\footnote{Indeed, we have only four regions; $\tau-\xi^5 + 1 < \tau'-\xi^5 < 2(\tau-\xi^5)$ and $\frac12 (\tau-\xi^5) < \tau'-\xi^5 < \tau-\xi^5 - 1$ for positive $\tau-\xi^5, \tau'-\xi^5$, and $\tau-\xi^5 + 1 < \tau'-\xi^5 < \frac12(\tau-\xi^5)$ and $2(\tau-\xi^5)  < \tau'-\xi^5 < \tau-\xi^5 - 1$ for negative $\tau-\xi^5, \tau'-\xi^5$, and the same argument can be applied on each region. \label{fn:4regions}} 
Since $|\wh{\psi}(\tau - \eta) | \lesssim |\tau - \tau'|^{-k}$ in this case, the left-hand side of \eqref{eq:b.8} is bounded by
\begin{equation}\label{eq:b.7}
\left(\int_{|\tau| < 1} \left| \int_{|\xi|^5 > 1} |\xi|^{j-s-5+5b}\int_{\tau + 1}^{2\tau} \frac{\Phi(\tau',\xi)}{|\tau-\tau'|^k} \; d\tau' d\xi \right|^2 \; d\tau \right)^{\frac12},
\end{equation}
where $\Phi$ is defined as in \eqref{eq:b.6}. Let $\epsilon = (k-1)/2$ for $k > 1$. Then, the change of variable, the Cauchy-Schwarz inequality, \eqref{eq:restriction} and the Fubini theorem yield
\[\begin{aligned}
\eqref{eq:b.7} &\lesssim \left(\int_{|\tau| < 1} \left| \int_{|\xi|^5 > 1} |\xi|^{j-s-5+5b}\int_{1}^{\tau} \frac{\Phi(\tau + h,\xi)}{|h|^k} \; dh d\xi \right|^2 \; d\tau \right)^{\frac12}\\
&\lesssim \left(\int_{|\tau| < 1}\int_{|\xi|^5>1}\int_{|h| > 1} \frac{|\Phi(\tau + h,\xi)|^2}{|h|^{2k-1-2\epsilon}} \; dh d\xi d\tau \right)^{\frac12}\\
&\lesssim \norm{\Phi}_{L^2} \lesssim \norm{w_2}_{X^{s,-b}}.
\end{aligned}\]
Remark from \eqref{eq:restriction} that it is essential to introduce $Y^{s,b}$ to cover whole negative range of regularity as mentioned in Subsection \ref{sec:sol space}. We use $Y^{s,b}$ space for the case when $\frac12|\xi|^5 < |\tau'-\xi^5| < 2|\xi|^5$. It suffice to consider
\begin{equation}\label{eq:b.8-1}
\left( \int_{|\tau| \le 1}  \left| \int_{|\xi|^5\le 1} |\xi|^j \int_{|\tau' - \xi^5| > 1} \frac{\wt{w}_2(\tau',\xi)}{|\tau'-\xi^5|} \wh{\psi}(\tau-\tau')\; d\tau'd\xi \right|^2 \; d\tau \right)^{\frac12}.
\end{equation}

We may assume $|\tau'| > 2$, otherwise, we use $\bra{\tau'}^{\frac s5} \sim 1$ and $|\wh{\psi}(\tau-\tau')| \lesssim 1$ to obtain 
\[\eqref{eq:b.8-1} \lesssim \norm{w_2}_{Y^{s,-b}}.\]
Since $\bra{\tau'} \sim \bra{\tau-\tau'}$, we have  
\begin{equation}\label{eq:Ysb}
\begin{aligned}
\eqref{eq:b.8-1} &\lesssim \left| \int_{|\xi|^5 > 1} |\xi|^{j-5+5b} \int_{|\tau'|> 2} \bra{\tau'}^{\frac s5}\bra{-b}\wt{w}_2(\tau',\xi)\bra{\tau-\tau'}^{-\frac s5}\wh{\psi}(\tau-\tau')\; d\tau'd\xi \right|\\
&\lesssim \left( \int_{|\tau| \ge 1} |\tau|^{-\frac{2s}{5}}|\wh{\psi}(\tau)|^2 \; d\tau \right)^{\frac12} \norm{w_2}_{Y^{s,-b}} \lesssim \norm{w_2}_{Y^{s,-b}}.
\end{aligned}
\end{equation}
Thus we cover whole regularity $s \in \R$.

\textbf{Case II.} $1 < |\tau|$. This case is much more complicated. When $|\xi|^5 < 1$, $\xi^5$ is negligible, and hence \eqref{eq:b.8} is reduced to
\[\left( \int_{|\tau| > 1} |\tau|^{\frac{2(s+2-j)}{5}} \left|  \int_{|\tau'| > 1} \frac{\wt{w}_2^{\ast}(\tau')}{|\tau'|} \wh{\psi}(\tau-\tau')\; d\tau' \right|^2 \; d\tau \right)^{\frac12},\]
where $\wt{w}_2^{\ast}(\tau') = \norm{\bra{\cdot}^s\wt{w}_2(\cdot,\tau')}_{L^2}$. Then, the following cases can be treated via the similar way:
\begin{itemize}
\item[II.a] $|\tau'| < \frac12 |\tau|$, in this case we use $|\wh{\psi}(\tau-\tau')| \lesssim |\tau|^{-k}$,
\item[II.b] $2|\tau| < |\tau'|$, in this case we use $|\wh{\psi}(\tau-\tau')| \lesssim |\tau'|^{-k}$,
\item[II.c] $\frac12 |\tau| < |\tau'| < 2 |\tau|$ and $\tau \cdot \tau' < 0$, in this case we use $|\wh{\psi}(\tau-\tau')| \lesssim |\tau|^{-k}$.
\end{itemize}
For the case when $\tau \cdot \tau' > 0$, we, similarly, split the case into $|\tau - \tau'| < 1$ and $|\tau-\tau'|>1$. Then, by using the Hardy-Littlewood maximal function of $|\tau'|^{-b}\wt{w}_2^{\ast}(\tau')$ for $|\tau - \tau'| < 1$, and the smoothness of $\psi$ ($|\wh{\psi}(\tau - \tau')| \lesssim |\tau-\tau'|^{-k}$) for $1 < |\tau - \tau'| < |\tau|$ similarly as before, we have for the rest case that
\[\eqref{eq:b.8} \lesssim \norm{w_2}_{X^{s,-b}}, \qquad (s+2-j)/5 \le \frac12\]
The similar argument as \eqref{eq:Ysb} yields 
\[\eqref{eq:b.8} \lesssim \norm{w_2}_{Y^{s,-b}},\]
when $(s+2-j)/5 > 1/2$.

Now we consider the case when $|\xi|^5 > 1$. For given $\tau, \xi$, we further divide the case into $|\tau' - \xi^5| \le \frac12 |\tau - \xi^5|$, $2|\tau - \xi^5| \le |\tau' - \xi^5|$ and $\frac12|\tau - \xi^5| < |\tau' - \xi^5| < 2|\tau - \xi^5|$.

For the case when $|\tau' - \xi^5| \le \frac12 |\tau - \xi^5|$, we know $|\tau - \xi^5| > 1$ and $|\wh{\psi}(\tau - \tau')| \lesssim |\tau-\xi^5|^{-k}$. Moreover, the region of $\xi$ can be expressed as $\cup_{j=1}^{4}\mathcal{A}_j$, where
\[\mathcal{A}_1 = \left\{\xi : |\xi|^5 > 1, \; 2|\tau| < |\xi|^5 \right\},\] 
\[\mathcal{A}_2 = \left\{\xi : |\xi|^5 > 1, \; |\xi|^5 < \frac12|\tau| \right\},\]
\[\mathcal{A}_3 = \left\{\xi : |\xi|^5 > 1, \; \frac12|\tau| \le |\xi|^5 \le 2|\tau|, \; \tau \cdot \xi^5 < 0 \right\}\]
and 
\[\mathcal{A}_4 = \left\{\xi : |\xi|^5 > 1, \; \frac12|\tau| \le |\xi|^5 \le 2|\tau|, \; \tau \cdot \xi^5 > 0 \right\}.\]
On $\mathcal{A}_1$, we have $|\tau|^{\frac{s+2-j}{5}} \lesssim |\xi|^{s+2-j}$\footnote{This property restricts the regularity condition as $\frac{s+2-j}{5} > 0$. However, in the case when $\frac{s+2-j}{5} \le 0$, since $|\tau|^{\frac{s+2-j}{5}} \lesssim 1$, the same argument yields
\[\eqref{eq:b.8} \lesssim \norm{w_2}_{X^{s,-b}}.\]}, $|\tau-\xi^5| \sim |\xi|^5$ and $|\wh{\psi}(\tau-\tau')| \lesssim |\xi|^{-5k}$ for $k > 1$. Then, we have
\begin{equation}\label{eq:b.10}
\begin{aligned}
\eqref{eq:b.8} &\lesssim \left( \int_{|\tau| > 1}  \left| \int_{\mathcal{A}_1} |\xi|^{2-5k} \int_{1 < |\tau'-\xi^5| } |\tau'-\xi^5|^{-1+b}\wt{\Phi}(\tau',\xi)\; d\tau'd\xi \right|^2 \; d\tau \right)^{\frac12}\\
&\lesssim \left( \int_{|\tau| > 1}  |\tau|^{1-2k} \; d\tau \right)^{\frac12} \norm{\Phi}_{L^2} \lesssim \norm{w_2}_{X^{s,-b}},
\end{aligned}
\end{equation}
where $\Phi$ is defined as in \eqref{eq:b.6}. On $\mathcal{A}_2$, we have $|\tau-\xi^5| \sim |\tau|$ and $|\wh{\psi}(\tau-\tau')| \lesssim |\tau|^{-k}$ for $k > 1$. Then, similarly as \eqref{eq:b.10}, we have
\[\eqref{eq:b.8} \lesssim \norm{w_2}_{X^{s,-b}}.\]
On $\mathcal{A}_3$, since $|\tau-\xi^5| \sim |\tau| \sim |\xi|^5$, we have 
\[\eqref{eq:b.8} \lesssim \norm{w_2}_{X^{s,-b}},\] 
similarly as on $\mathcal{A}_1$ or $\mathcal{A}_2$. On $\mathcal{A}_4$, we have $|\tau|^{\frac{s+2-j}{5}} \sim |\xi|^{s+2-j}$ and $|\wh{\psi}(\tau-\tau')| \lesssim |\tau-\xi^5|^{-k}$ for $k > 1$. Moreover, it is enough to consider the region $\tau +1 < \xi^5 < 2\tau$ due to the footnote \ref{fn:4regions}. Then, we have
\begin{align}
\eqref{eq:b.8} &\lesssim \left( \int_{\tau > 1}  \left| \int_{\tau + 1}^{2\tau} \xi^{2}|\tau-\xi^5|^{-k} \int_{1 < |\tau'-\xi^5| } |\tau'-\xi^5|^{-1+b}\wt{\Phi}(\tau',\xi)\; d\tau'd\xi \right|^2 \; d\tau \right)^{\frac12} \nonumber \\
&\lesssim \left( \int_{|\tau| > 1}  \left| \int_{\tau + 1}^{2\tau} \xi^{2}|\tau-\xi^5|^{-k}\wt{\Phi}^{\ast}(\xi)\; d\xi \right|^2 \; d\tau \right)^{\frac12}, \label{eq:b.11}
\end{align}
where $\wt{\Phi}^{\ast}(\xi) = \norm{\wt{\Phi}(\cdot, \xi)}_{L^2}$. Let $h =  \xi^5 - \tau$. Then the change of variables, the Cauchy-Schwarz inequality and the Fubini theorem yields
\[\begin{aligned}
\eqref{eq:b.11} &\lesssim \left( \int_{|\tau| > 1}  \left| \int_{1}^{\tau} |h|^{-k}\wt{\Phi}^{\ast}((\tau + h)^{\frac15})(\tau + h)^{-\frac25}\; dh \right|^2 \; d\tau \right)^{\frac12}\\
&\lesssim \left( \int_{|\tau| > 1} \int_{1}^{\tau} |h|^{-2k+1+2\epsilon}|\wt{\Phi}^{\ast}((\tau + h)^{\frac15})|^2(\tau + h)^{-\frac45}\; dh  \; d\tau \right)^{\frac12}\\
&\lesssim \norm{w_2}_{X^{s,-b}},
\end{aligned}\]
for small $0<\epsilon \ll 1$, which implies 
\[\eqref{eq:b.8} \lesssim \norm{w_2}_{X^{s,-b}}.\]
For the case when $2|\tau - \xi^5| \le |\tau' - \xi^5|$, the region of $\xi$ can be further divided by 
\[\mathcal{B}_1 = \left\{\xi : |\xi|^5 > 1, \; |\tau - \xi^5| < 1 \right\} \quad \mbox{and} \quad\mathcal{B}_2 = \left\{\xi : |\xi|^5 > 1, \; |\tau - \xi^5|  \ge 1  \right\}.\]
On $\mathcal{B}_1$, we know $|\tau|^{\frac{s+2-j}{5}} \sim |\xi|^{s+2-j}$. Since $|\wh{\psi}(\tau-\tau')| \lesssim 1$ and 
\[\int_{1 < |\tau'-\xi^5| } |\tau'-\xi^5|^{-1+b}\wt{\Phi}(\tau',\xi)\; d\tau' \lesssim \wt{\Phi}^{\ast}(\xi),\]
we have from the change of variable ($\eta = \xi^5$) that
\[
\begin{aligned}
\eqref{eq:b.8} &\lesssim  \left( \int_{|\tau| > 1}  \left| \int_{|\eta - \tau| < 1} \wt{\Phi}^{\ast}(\eta^{\frac15})\eta^{-\frac25}\; d\eta \right|^2 \; d\tau \right)^{\frac12}\\
&\lesssim  \left( \int_{|\tau| > 1}  |M\wt{\Phi}^{\ast\ast}(\tau)|^2 \; d\tau \right)^{\frac12},
\end{aligned}
\]
where $\wt{\Phi}^{\ast\ast}(\eta) = \wt{\Phi}^{\ast}(\eta^{\frac15})\eta^{-\frac25}$. Note that $\norm{\wt{\Phi}^{\ast\ast}}_{L^2} = c\norm{w_2}_{X^{s,-b}}$. Therefore, we have
\[\eqref{eq:b.8} \lesssim \norm{w_2}_{X^{s,-b}}.\]
On $\mathcal{B}_2$, by dividing the region of $\xi$ as $\mathcal{A}_j$, $j=1,2,3,4$, we have similarly 
\[\eqref{eq:b.8} \lesssim \norm{w_2}_{X^{s,-b}}.\]

For the rest case ($\frac12|\tau - \xi^5| < |\tau' - \xi^5| < 2|\tau - \xi^5|$), we further divide the region of $\tau'$ as $ \mathcal{C}_1\cup \mathcal{C}_2$, where
\[\mathcal{C}_1 = \left\{\tau' : |\tau'| > 1, \;  \frac12|\tau - \xi^5| < |\tau' - \xi^5| < 2|\tau - \xi^5|, \; (\tau'-\xi^5)\cdot(\tau-\xi^5)<0\right\}\]
and
\[\mathcal{C}_2 = \left\{\tau' : |\tau'| > 1, \;  \frac12|\tau - \xi^5| < |\tau' - \xi^5| < 2|\tau - \xi^5|, \; (\tau'-\xi^5)\cdot(\tau-\xi^5)>0\right\}.\]
On $\mathcal{C}_1$, since
\[|\wh{\psi}(\tau - \tau')| \lesssim |\tau-\xi^5|^{-k} \sim |\tau-\xi^5|^{-k},\]
for $k \ge 0$, by dividing the region of $\xi$ as $\mathcal{A}_j$, $j=1,2,3,4$, we have similarly 
\[\eqref{eq:b.8} \lesssim \norm{w_2}_{X^{s,-b}}.\]
On the other hand, we further split the set $\mathcal{C}_2$ by
\[\mathcal{C}_{21} = \left\{\tau' : |\tau'| > 1, \;  \frac12|\tau - \xi^5| < |\tau' - \xi^5| < 2|\tau - \xi^5|, \; (\tau'-\xi^5)\cdot(\tau-\xi^5)>0,\; |\tau - \tau'| < 1 \right\}\] 
and
\[\mathcal{C}_{22} = \left\{\tau' : |\tau'| > 1, \;  \frac12|\tau - \xi^5| < |\tau' - \xi^5| < 2|\tau - \xi^5|, \; (\tau'-\xi^5)\cdot(\tau-\xi^5)>0 ,\; |\tau - \tau'| > 1 \right\}.\] 
On $\mathcal{C}_{21}$, \eqref{eq:b.8} is reduced by
\begin{equation}\label{eq:b.12}
\left( \int_{|\tau| > 1} |\tau|^{\frac{2(s+2-j)}{5}} \left| \int_{|\xi|^5>1} |\xi|^{j-s}|\tau-\xi^5|^{-1+b}M\wt{\Phi}(\tau,\xi)d\xi \right|^2 \; d\tau \right)^{\frac12}.
\end{equation}
Then, by dividing the region of $\xi$ in \eqref{eq:b.12} as $\mathcal{A}_{j}$, $j=1,2,3,4$, we have similarly 
\[\eqref{eq:b.8} \lesssim \norm{w_2}_{X^{s,-b}}.\]
On $\mathcal{C}_{22}$, we know $|\wh{\psi}(\tau-\tau')| \lesssim |\tau-\tau'|^{-k}$ for $k \ge 0$. Then, \eqref{eq:b.8} is reduced by
\begin{equation}\label{eq:b.13}
\begin{aligned}
\Big( \int_{|\tau| > 1} |\tau|^{\frac{2(s+2-j)}{5}}  \Big| \int_{|\xi|^5>1} |\xi|^{j-s}&|\tau-\xi^5|^{-1+b}\\
&\times\Big(\int_{1}^{\tau-\xi^5}|h|^{-2k+1+2\epsilon}|\wt{\Phi}(\tau+h,\xi)|^2 \; dh\Big)^{\frac12} d\xi \Big|^2 \; d\tau \Big)^{\frac12},
\end{aligned}
\end{equation}
for small $0< \epsilon \ll 1$. Then, for $k \gg 1$ large enough, by dividing the region of $\xi$ in \eqref{eq:b.13} as $\mathcal{A}_{j}$, $j=1,2,3,4$, we have similarly 
\[\eqref{eq:b.8} \lesssim \norm{w_2}_{X^{s,-b}}.\]
Therefore, we have
\[\|\psi(t) \partial_x^j\mathcal{D}w(t,x)\|_{C(\mathbb{R}_x;H^{\frac{s+2-j}{5}}(\mathbb{R}_t))} \lesssim  \|w\|_{X^{s,-b}} + \|w\|_{Y^{s,-b}} .\]

\textbf{(c).} We split $w=w_1+w_2$ similarly as before. For $w_1$, by Lemma \ref{grupo} (c) and \eqref{eq:linear estimate}, we have
\[\norm{\psi(t) \mathcal{D}w_1(t,x)}_{X^{s,b} \cap D^{\alpha}} \lesssim \sum_{k=1}^{\infty}\frac{1}{k!}\norm{F_1^k}_{H_x^s} \lesssim \norm{w}_{X^{s,-b}},\]
where $F_1^k$ is defined as in \eqref{eq:linear estimate0}.
 
For $w_2$, recall \eqref{eq:duhamel fourier(b)}
\[
\begin{aligned}
\psi\partial_x^j\mathcal{D}w(t,x) &= c\int e^{ix\xi}e^{it\xi^5}(i\xi)^j\psi(t) \int \frac{\wt{w}(\tau',\xi)}{i(\tau' - \xi^5)} \left(e^{it(\tau'-\xi^5) }- 1\right) \; d\tau'd\xi\\
&= I -II.
\end{aligned}
\]
Then, we use Lemma \ref{grupo} (c) and \eqref{eq:linear estimate2} for $II$ to obtain
\[\norm{\psi  e^{t\partial_x^5}W}_{X^{s,b} \cap D^{\alpha}} \lesssim \norm{W}_{H^s} \lesssim \norm{w}_{X^{s,-b}},\]
where $W$ is defined as in \eqref{eq:linear estimate2}.

Now it remains to show
\begin{equation}\label{eq:c.3}
\left(\int_{|\xi|\le 1}\int\bra{\tau}^{2\alpha} \left| \int \frac{\wt{w}_2(\tau',\xi)}{i(\tau'-\xi^5)}\wh{\psi}(\tau - \tau') \; d\tau'\right|^2 \; d\tau d\xi\right)^{\frac12} \lesssim \norm{w}_{X^{s,-b}}
\end{equation}
and
\begin{equation}\label{eq:c.4}
\Big(\int_{|\xi| > 1}|\xi|^{2s}\int\bra{\tau-\xi^5}^{2b} \Big| \int \frac{\wt{w}_2(\tau',\xi)}{i(\tau'-\xi^5)}\wh{\psi}(\tau - \tau') \; d\tau'\Big|^2 \; d\tau d\xi\Big)^{\frac12}\lesssim \norm{w}_{X^{s,-b}}.
\end{equation}
It follows the similar way used in the proof of (b). In fact, the proofs of \eqref{eq:c.3} and \eqref{eq:c.4}  are much simpler and easier than the proof of (b), since $L^2$ integral with respect to $\xi$ is negligible and hence it is enough to consider the relation between $\tau - \xi^5$ and $\tau'-\xi^5$. Thus, we omit the details and we have
\[\norm{\psi \mathcal{D}w}_{X^{s,b} \cap D^{\alpha}} \lesssim \norm{w}_{X^{s,-b}}.\]
\end{proof}

\begin{lemma}\label{edbf}$\;$\\
	\begin{itemize}
\item[(a)] \emph{(Space traces)}  Let $-\frac92 < s < 5$\footnote{The restriction of regularity makes the range of $\lambda$ non-empty.}. For $\max(s-\frac{9}{2}, -4) <\lambda< \min(s+\frac{1}{2}, \frac12)$, we have
		$$\|\psi(t)\mathcal{L}_{\pm}^{\lambda}f(t,x)\|_{C\big(\mathbb{R}_t;\,H^s(\mathbb{R}_x)\big)}\leq c \|f\|_{H_0^\frac{s+2}{5}(\mathbb{R}^+)};$$
\item[(b)] \emph{((Derivatives) Time traces)} For $-4+j<\lambda<1+j$, $j=0,1,2$, we have 
		\begin{equation}\label{eq:(b)0}
		\|\psi(t)\partial_x^j\mathcal{L}_{\pm}^{\lambda}f(t,x)\|_{C\big(\mathbb{R}_x;\,H_0^{\frac{s+2-j}{5}}(\mathbb{R}_t^+)\big)}\leq c \|f\|_{H_0^\frac{s+2}{5}(\mathbb{R}^+)};
		\end{equation}
\item[(c)] \emph{(Bourgain spaces)} Let $-7 < s < \frac52$\footnote{The restriction of regularity makes the range of $\lambda$ non-empty.} and $b < \frac12 < \alpha < 1-b$. For $\max(s-2, -\frac{13}{2}) <\lambda< \min(s+\frac{1}{2}, \frac12)$, we have
		\[\|\psi(t)\mathcal{L}_{\pm}^{\lambda}f(t,x)\|_{X^{s,b}\cap D^{\alpha}}\leq c \|f\|_{H_0^\frac{s+2}{5}(\mathbb{R}^+)}.\]
\end{itemize}

\end{lemma}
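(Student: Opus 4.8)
The plan is to follow the strategy of Colliander--Kenig \cite{CK} (Section 5) and Holmer \cite{Holmerkdv}, transferring their KdV computations to the fifth-order setting through the oscillatory integral $B$ and its Mellin transform. By density of $C_{0,c}^{\infty}(\R^+)$ in $H_0^{\frac{s+2}{5}}(\R^+)$, it suffices to establish each a priori bound for $f\in C_{0,c}^{\infty}(\R^+)$; causality of $f$ (support in $t>0$) is preserved both by $\mathcal{I}_{-\frac{\lambda}{5}}$ and by $\mathcal{L}_{\pm}^{\lambda}$, which is what places every time trace in the zero-extension space $H_0$. I would treat only $\mathcal{L}_{+}^{\lambda}$ in detail, the operator $\mathcal{L}_{-}^{\lambda}$ being handled identically after replacing the decay bound Lemma \ref{lem:decay}(i) and the Mellin identity \eqref{mellin2} for $B(x)$ by their counterparts Lemma \ref{lem:decay}(ii) and \eqref{mellin1} for $B(-x)$. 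It is precisely the slower decay $\bra{x}^{-3/8}$ of $B(-x)$, and hence the restriction $\mathrm{Re}\,\lambda<\tfrac38$ in \eqref{mellin1}, that forces the upper endpoint $\lambda<\min(s+\tfrac12,\tfrac12)$.

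The three estimates mirror parts (a)--(c) of Lemmas \ref{grupo} and \ref{duhamel}, and the common engine is the explicit Fourier representation of the operator. Starting from \eqref{forcing}, using $\mathcal{F}_x[e^{(t-t')\px^5}\delta_0](\xi)=e^{i(t-t')\xi^5}$ together with the Fourier transforms \eqref{transformada} of the homogeneous distributions $x_{\mp}^{\lambda-1}/\Gamma(\lambda)$ and of the Riemann--Liouville kernels, one finds that the full space--time transform factorizes as a spatial multiplier homogeneous of order $-\lambda$, a Duhamel denominator supported on $\tau-\xi^5$, and a time-side factor of order $\tfrac{4+\lambda}{5}$ acting on $\wh{f}$. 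For the space trace (a) I would use this to reduce $\sup_t\norm{\psi(t)\mathcal{L}_{+}^{\lambda}f(t,\cdot)}_{H^s_x}$ to an $L^2_\xi$ bound after integrating out $\tau-\xi^5$; the Mellin identity \eqref{mellin2} supplies the exact homogeneity and the admissible $(s,\lambda)$ range, while the $5$-to-$1$ scaling $\tau\sim\xi^5$ converts the time regularity of $f$ into the $H^s_x$ count. For the time trace (b) I would fix $x$, differentiate $j$ times, and split physical space into the rapidly decaying region (Lemma \ref{holmer1}, where $\bra{x}^{-m}$ makes the contribution negligible) and a neighborhood of $x=0$, where the boundary value is given explicitly by Lemma \ref{trace1}; the fractional integral $\mathcal{I}_{-\frac{\lambda}{5}}$ with Lemma \ref{lio} then turns the $H_0^{\frac{s+2}{5}}$ regularity of $f$ into the claimed $H_0^{\frac{s+2-j}{5}}$ regularity, and the hypothesis $-4+j<\lambda<1+j$ is exactly what guarantees, via Lemma \ref{holmer1}, that $\px^j\mathcal{L}_{\pm}^{\lambda}f$ is continuous up to $x=0$.

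The Bourgain estimate (c) is the crux. Here I weight the Fourier representation by $\bra{\xi}^s\bra{\tau-\xi^5}^b$ for the $X^{s,b}$ part and by $\bra{\tau}^{\alpha}\mathbf{1}_{\set{|\xi|\le1}}$ for the $D^{\alpha}$ part. The $\delta(\tau-\xi^5)$ contribution of the Duhamel denominator behaves like the free evolution and is absorbed as in Lemma \ref{grupo}(c); the principal-value part is split into $|\tau-\xi^5|\lesssim1$ and $|\tau-\xi^5|\gtrsim1$, and then into low and high $|\xi|$. On the low-frequency set $|\xi|\le1$ the weights satisfy $\bra{\tau}\sim\bra{\tau-\xi^5}$, so the $D^{\alpha}$ piece collapses to a one-dimensional computation in $\tau$ that closes for $\alpha<1-b$. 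The genuinely delicate region is $|\tau-\xi^5|\sim|\xi|^5$ at high frequency: because $b<\tfrac12$ is forced on us by the boundary operator, the factor $|\tau-\xi^5|^{-1}$ survives the weight $\bra{\tau-\xi^5}^b$ only barely, and I expect this to be the main obstacle. To close it I would play the order-$\tfrac{4+\lambda}{5}$ time smoothing against the lower endpoint $\lambda>\max(s-2,-\tfrac{13}{2})$ together with the precise constants from Lemma \ref{mellin}, that endpoint being exactly what renders the relevant $\xi$-integral convergent. The remaining cases $|\tau-\xi^5|\lesssim1$ and $|\tau-\xi^5|\gg|\xi|^5$ are easier and follow the region decomposition already carried out in the proof of Lemma \ref{duhamel}(c).
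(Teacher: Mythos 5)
Your outline for parts (a) and (c) is broadly in the spirit of the paper's proof: (a) is indeed a Fourier-side computation exploiting the $5$-to-$1$ scaling $\eta=\xi^5$, and your decomposition for (c) into a free-evolution-like piece absorbed by Lemma \ref{grupo}(c) plus principal-value pieces split over $|\tau-\xi^5|\lesssim 1$, $|\tau-\xi^5|\gtrsim 1$ and low/high $|\xi|$ is essentially the paper's $f_1,f_2,f_3$ decomposition. Two misattributions, though: the paper's proof of (a) never uses the Mellin identity \eqref{mellin2} (Mellin enters only in Lemma \ref{trace1}), and the upper endpoint $\lambda<\min(s+\frac12,\frac12)$ in (a) comes from integrability of $|\eta|^{-\frac{2\lambda}{5}-\frac45}\langle\eta\rangle^{\frac{2s}{5}}$ near $\eta=0$ together with Lemmas \ref{sobolevh0} and \ref{sobolev0}, not from the $\langle x\rangle^{-3/8}$ decay of $B(-x)$; on the Fourier side $\mathcal{L}_+$ and $\mathcal{L}_-$ carry the same modulus $|\xi|^{-\lambda}$, so the two signs are genuinely symmetric there.

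The genuine gap is part (b). You propose to prove the time-trace bound by splitting \emph{physical} space: away from $x=0$ invoke the decay bounds of Lemma \ref{holmer1}, and near $x=0$ invoke Lemma \ref{trace1}. This cannot work. The decay bounds of Lemma \ref{holmer1} are qualitative: the constants $c_{m,\lambda,g}$ depend on the data $g$ in an uncontrolled way (they are proved for a fixed smooth compactly supported datum), so they can never be converted into the required quantitative bound by $\|f\|_{H_0^{(s+2)/5}(\mathbb{R}^+)}$; moreover, pointwise-in-$t$ spatial decay says nothing about the $H_t^{\frac{s+2-j}{5}}$ regularity \emph{in time} at a fixed $x$, which is what the norm $C\big(\mathbb{R}_x;H_0^{\frac{s+2-j}{5}}(\mathbb{R}_t^+)\big)$ measures. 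Likewise, Lemma \ref{trace1} gives the value only at the single point $x=0$, not control on a neighborhood. What is actually needed, and what the paper does, is a uniform-in-$x$ multiplier estimate: after reducing to $j=0$ via $\px^j\mathcal{L}^{\lambda}_{\pm}f=\mathcal{L}^{\lambda-j}_{\pm}(\mathcal{I}_{-j/5}f)$ and commuting $(I-\partial_t^2)^{\frac{s+2}{10}}$ through the operator, one must prove the $L_x^\infty L_t^2$ bound \eqref{eq:(b)}; writing $\chi_{(-\infty,t)}=\frac12\mbox{sgn}(t-t')+\frac12$, the $\frac12$-part is computed exactly to be $cf(t)$, while the $\mbox{sgn}$-part requires showing that $g(\tau)=\lim_{\epsilon\to0}\int_{|\tau-\xi^5|>\epsilon}e^{ix\xi}(\tau-i0)^{\frac{\lambda+4}{5}}(\xi-i0)^{-\lambda}(\tau-\xi^5)^{-1}\,d\xi$ is bounded uniformly in $\tau$ and $x$. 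That step is the heart of (b): it needs the rescaling $\xi\mapsto|\tau|^{1/5}\xi$, a cutoff isolating the singularity at $\xi^5=\tau$, and the identification of the singular piece as a convolution with $\frac12 e^{ix}\mbox{sgn}(x)$ (a Hilbert-transform-type bound). None of this is present in, or recoverable from, your physical-space splitting, so as written your proof of (b) fails.
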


\begin{proof}
The proof is based on the argument in \cite{Holmerkdv} (see also \cite{Cavalcante}). We only consider $\mathcal{L}_-^{\lambda}$ for notational simplicity.

\textbf{(a).} By density, we may assume that $f\in C_{0,c}^{\infty}(\mathbb{R}^{+})$. Moreover, from the definition of $\mathcal{L}_-^{\lambda}$, it suffices to consider $\mathcal{L}_{-}^{\lambda}f(t,x)$ (removing $\psi$) for $\supp f \subset [0,1]$, thanks to Lemma \ref{cut}.

From \eqref{transformada} \eqref{eq:BFOC} and \eqref{eq:BFO}, we see that 
\[\mathcal{F}_x(\mathcal{L}_-^{\lambda}f)(t,\xi)=Me^{-\frac{i\pi\lambda}{2}}(\xi-i0)^{-\lambda}\int_0^te^{i(t-t')\xi^5}\mathcal{I}_{-\frac{\lambda}{5}-\frac{4}{5}}f(t') \;dt'.\] 
For fixed $t$, the change of variables ($\eta=\xi^5$), \eqref{transformada2} and the definition of the Fourier transform give
	\begin{eqnarray*}
		\|\mathcal{L}_-^{\lambda}f(t,\cdot)\|_{H^s(\mathbb{R})}^2&\leq& c \int_{\eta}|\eta|^{-\frac{2\lambda}{5}-\frac{4}{5}}\langle\eta\rangle^{\frac{2s}{5}}\left|\int_0^te^{i(t-t')\eta}\mathcal{I}_{-\frac{\lambda}{5}-\frac{4}{5}}f(t')dt'\right|^2d\eta\\
		&=&c\int_{\eta}|\eta|^{-\frac{2\lambda}{5}-\frac{4}{5}}\langle\eta\rangle^{\frac{2s}{5}}\left|\big(\chi_{(-\infty,t)}\mathcal{I}_{-\frac{\lambda}{5}-\frac{4}{5}}f\big)^{\widehat{}}(\eta)\right|^2d\eta.
	\end{eqnarray*}
The condition $-1 < -\frac{2\lambda}{5}-\frac{4}{5}$ ($\Leftrightarrow \lambda < \frac12$) enables us to replace $|\eta|^{-\frac{2\lambda}{5}-\frac{4}{5}}$ by $\bra{\eta}^{-\frac{2\lambda}{5}-\frac{4}{5}}$ thanks to Lemma \ref{sobolev0} (for $-1 < -\frac{2\lambda}{5}-\frac{4}{5} \le 0$) and the fact $|\eta|^{-\frac{2\lambda}{5}-\frac{4}{5}} \le \bra{\eta}^{-\frac{2\lambda}{5}-\frac{4}{5}}$ (for $0 < -\frac{2\lambda}{5}-\frac{4}{5}$). Moreover, Lemmas \ref{sobolevh0} (under the condition $-1 < -\frac{2\lambda}{5}-\frac{4}{5} + \frac{2s}{5} < 1$ for removing $\chi_{(-\infty,t)})$ and \ref{lio} (under the condition $-4 < \lambda$) yield
\[
\begin{aligned}
\int_{\eta}|\eta|^{-\frac{2\lambda}{5}-\frac{4}{5}}\langle\eta\rangle^{\frac{2s}{5}}\left|(\chi_{(-\infty,t)}\mathcal{I}_{-\frac{\lambda}{5}-\frac{4}{5}}f)^{\widehat{}}(\eta)\right|^2d\eta &\leq c\int_{\eta}\langle\eta\rangle^{\frac{2s}{5}-\frac{2\lambda}{5}-\frac{4}{5}}\left|(\chi_{(-\infty,t)}\mathcal{I}_{-\frac{\lambda}{5}-\frac{4}{5}}f)^{\widehat{}}(\eta)\right|^2d\eta\\
&\leq c\norm{\mathcal{I}_{-\frac{\lambda}{5}-\frac{4}{5}}f}_{H^{\frac{2s}{5}-\frac{2\lambda}{5}-\frac{4}{5}}}^2 \leq c \|f\|_{H_0^{\frac{s+2}{5}}}^2,	
\end{aligned}
\]
which proves (a) thanks to the definition of $H_0^s(\R^+)$-norm. 

\textbf{(b).} A direct calculation gives
\[\px^j\mathcal{L}_{\pm}^{\lambda}f = \mathcal{L}_{\pm}^{\lambda - j}(\mathcal{I}_{-\frac{j}{5}}f).\]
From this and Lemma \ref{lio}, it suffices to show \eqref{eq:(b)0} only for $j=0$. 

Lemma \ref{cut} ensures us to ignore the cut-off function $\psi$. The change of variables $t\rightarrow  t-t'$ gives
\begin{eqnarray*}
	& &(I-\partial_t^2)^{\frac{s+2}{10}}\left(\frac{x_{-}^{\lambda-1}}{\Gamma(\lambda)}*\int_{-\infty}^te^{-i(t-t')\partial_x^5}\delta(x)h(t')dt'\right)\\      
	& &\quad=\left(\frac{x_{-}^{\lambda-1}}{\Gamma(\lambda)}*\int_{-\infty}^te^{-i(t-t')\partial_x^5}\delta(x)(I-\partial_{t'}^2)^{\frac{s+2}{10}}h(t')dt'\right).
\end{eqnarray*}
Thus, it suffices to prove
\begin{equation}\label{eq:(b)}
\left\|\int_{\xi}e^{ix\xi}(\xi-i0)^{-\lambda}\int_{-\infty}^te^{i(t-t')\xi^5}(\mathcal{I}_{-\frac{\lambda}{5}-\frac{4}{5}}f)(t')dt'd\xi\right\|_{L_x^{\infty}L_t^2(\mathbb{R})} \leq c \|f\|_{L_t^2(\mathbb{R}^+)},
\end{equation}
thanks to $\pt^{\sigma}(\mathcal{I}_{\alpha}f) = \mathcal{I}_{\alpha}(\pt^{\sigma}f)$. We use $\chi_{(-\infty,t)}=\frac{1}{2}\mbox{sgn}(t-t')+\frac{1}{2}$ to obtain
\[\begin{aligned}
\int_{\xi}e^{ix\xi}(\xi-i0)^{-\lambda}&\int_{-\infty}^te^{i(t-t')\xi^5}(\mathcal{I}_{-\frac{\lambda}{5}-\frac{4}{5}}f)(t') \; dt'd\xi\\
=&~{}\frac12\int_{\xi}e^{ix\xi}(\xi-i0)^{-\lambda}\int_{-\infty}^{\infty}\mbox{sgn}(t-t')e^{i(t-t')\xi^5}(\mathcal{I}_{-\frac{\lambda}{5}-\frac{4}{5}}f)(t') \; dt'd\xi\\
&+\frac12\int_{\xi}e^{ix\xi}(\xi-i0)^{-\lambda}\int_{-\infty}^{\infty}e^{i(t-t')\xi^5}(\mathcal{I}_{-\frac{\lambda}{5}-\frac{4}{5}}f)(t') \; dt'd\xi\\
:=& I(t,x)+II(t,x).
\end{aligned}\]

We first deal with $I(t,x)$. We can rewrite $I$ as follows: 
\[I(t,x)=\frac12\int_{\xi}e^{ix\xi}(\xi-i0)^{-\lambda} \left((e^{i\cdot\xi^5}\mbox{sgn}(\cdot))*\mathcal{I}_{-\frac{\lambda}{5}-\frac{4}{5}}f\right)(t) \; d\xi.\]
A direct calculation gives
$$\mathcal{F}_t\left((e^{i\cdot\xi^5}\mbox{sgn}(\cdot))*\mathcal{I}_{-\frac{\lambda}{5}-\frac{4}{5}}f\right)(\tau)=\frac{(\tau-i0)^{\frac{4+\lambda}{5}}\hat{f}(\tau)}{i(\tau-\xi^5)},$$ 
which, in addition to Fubini theorem and DCT, implies
\begin{equation*}
I(t,x)=\int_{\tau}e^{it\tau}\lim_{\epsilon\rightarrow 0}\int_{|\tau-\xi^5|>\epsilon}\frac{e^{ix\xi}(\tau-i0)^{\frac{\lambda+4}{5}}(\xi-i0)^{-\lambda}}{i(\tau-\xi^5)}\wh{f}(\tau) \; d\xi d\tau.
\end{equation*}
Thus, once we show that the function 
$$g(\tau):=\lim_{\epsilon\rightarrow 0}\int_{|\tau-\xi^5|>\epsilon}\frac{e^{ix\xi}(\tau-i0)^{\frac{\lambda+4}{5}}(\xi-i0)^{-\lambda}}{(\tau-\xi^5)} \; d\xi$$
is bounded independently of $\tau$ variable, the Plancherel's theorem enables us to obtain \eqref{eq:(b)}. The change of variables $\xi \mapsto |\tau|^{\frac{1}{5}}\xi$  and the fact from \eqref{transformada2} that
\[(|\tau|^{\frac{1}{5}}\xi-i0)^{-\lambda}=|\tau|^{-\frac{\lambda}{5}}(\xi_{+}^{-\lambda}+e^{i\pi\lambda}\xi_{-}^{-\lambda})\]
gives
\begin{eqnarray*}
	g(\tau) &=&\chi_{\{\tau>0\}}\int_{\xi}e^{ix|\tau|^{\frac{1}{5}}\xi} \; \frac{\xi_{+}^{-\lambda}+e^{i\pi\lambda}\xi_{-}^{-\lambda}}{1-\xi^5} \; d\xi - e^{-\frac{i\pi(\lambda + 4)}{5}}\chi_{\{\tau<0\}}\int_{\xi}e^{ix|\tau|^{\frac{1}{5}}\xi} \; \frac{\xi_{+}^{-\lambda}+e^{i\pi\lambda}\xi_{-}^{-\lambda}}{1+\xi^5} \; d\xi\\
	&:=&g_1 - e^{-\frac{i\pi(\lambda + 4)}{5}}g_2.
\end{eqnarray*}
We only consider $g_1$, since $g_2$ is uniformly bounded in $\tau$ for $-4 < \lambda < 1$. Let $\zeta\in C^{\infty}(\mathbb{R})$ such that $\zeta(\xi)=1$ in $[\frac{3}{4},\frac{4}{3}]$ and $\zeta(t)=0$ outside $(\frac{1}{2},\frac{3}{2})$. Then we obtain
\begin{eqnarray*}
	g_1&=&\chi_{\{\tau>0\}}\int_{\xi}e^{ix|\tau|^{\frac{1}{5}}\xi} \zeta(\xi) \frac{\xi_{+}^{-\lambda}}{1-\xi^5} \; d\xi+ \chi_{\{\tau>0\}}\int_{\xi}e^{ix|\tau|^{\frac{1}{5}}\xi} (1-\zeta(\xi)) \frac{\xi_{+}^{-\lambda}+e^{i\pi\lambda}\xi_{-}^{-\lambda}}{1-\xi^5} \; d\xi\\
	&=&g_{11}+g_{12}.
\end{eqnarray*}
It is clear that $g_{12}$ is bounded independently of $\tau$ when $\lambda>-4$, and hence it remains to deal with $g_{11}$. Let
\[\wh{\Theta}(\xi) = \frac{\zeta(\xi) \xi_+^{-\lambda}}{1+\xi + \xi^2 + \xi^3 + \xi^4} \quad \mbox{and} \quad \wh{\Psi}(\xi) = \frac{1}{i(\xi - 1)}.\]
We remark that $\wh{\Theta}$ is a Schwartz function, and hence $\Theta \in \Sch(\R)$. Moreover, we immediately know from the fact $\ft_x[\mbox{sgn}(x)](\xi) = \frac{2}{i\xi}$ that
\[\Psi(x) = \frac12 e^{ix} \mbox{sgn}(x).\]
Then, $g_{11}$ can be written as
\[g_{11}(\tau) = -i\chi_{\{\tau>0\}}\int_{\xi}e^{ix|\tau|^{\frac{1}{5}}\xi}\wh{\Theta}(\xi)\wh{\Psi}(\xi) \; d\xi = -2i\pi \chi_{\{\tau>0\}}(\Theta \ast \Psi)(|\tau|^{\frac15}x),\]
which implies
\[
\begin{aligned}
|g_{11}(\tau)| &\lesssim \left|\int \Theta(y)\Psi(|\tau|^{\frac15}x - y) \; dy  \right|\\
&\lesssim \int |\Theta(y)| \; dy \lesssim_{\zeta} 1.
\end{aligned}
\]

We now deal with $II(t,x)$. The definition of Fourier transform, \eqref{transformada2}, the changes of variables ($\eta=\xi^5$) and contour yield
\[\begin{aligned}
	II(t,x)=&~{}\frac12\int_{\xi}e^{ix\xi}e^{it\xi^5}(\xi^5-i0)^{\frac{\lambda+4}{5}}\wh{f}(\xi^5)(\xi-i0)^{-\lambda} \; d\xi\\
	=&~{}\frac12\int_{\eta}e^{it\eta}e^{ix\eta^{\frac{1}{5}}}(\eta-i0)^{\frac{\lambda+4}{5}}(\eta^{\frac{1}{5}}-i0)^{-\lambda}\eta^{-\frac{4}{5}}\wh{f}(\eta) \;d\eta\\
	=&~{}cf(t),
\end{aligned}\]
for some $c \in \mathbb{C}$, which implies $\|II(\cdot,x)\|_{L_t^2} \lesssim  \|f\|_{L_t^2}$. Therefore, we complete the proof.	
	
\textbf{(c).} A direct calculation gives
\begin{equation*}
\mathcal{F}_x(\psi(t)\mathcal{L}_-^{\lambda}f)(t,\xi)=Me^{-\frac{i\pi\lambda}{2}}e^{\frac{i\pi(\lambda+4)}{10}}(\xi-i0)^{-\lambda}\psi(t)e^{it\xi^5}\int\frac{e^{it(\tau'-\xi^5)}-1}{i(\tau'-\xi^5)}(\tau'-i0)^{\frac{\lambda}{5}+\frac{4}{5}}\wh{f}(\tau')\; d\tau',
\end{equation*}
which can be divided into the followings: 
\begin{equation*}
\wh{f}_1(t,\xi)=Me^{-\frac{i\pi\lambda}{2}}e^{\frac{i\pi(\lambda+4)}{10}}(\xi-i0)^{-\lambda}\psi(t)\int\frac{e^{it\tau'}-e^{it\xi^5}}{i(\tau'-\xi^5)}\theta(\tau'-\xi^5)(\tau'-i0)^{\frac{\lambda}{5}+\frac{4}{5}}\wh{f}(\tau')\; d\tau',
\end{equation*} 
\begin{equation*}
\wh{f}_2(t,\xi)=Me^{-\frac{i\pi\lambda}{2}}e^{\frac{i\pi(\lambda+4)}{10}}(\xi-i0)^{-\lambda}\psi(t)\int\frac{e^{it\tau'}}{i(\tau'-\xi^5)}(1-\theta(\tau'-\xi^5))(\tau'-i0)^{\frac{\lambda}{5}+\frac{4}{5}}\wh{f}(\tau')\; d\tau'
\end{equation*} 
and
\begin{equation*}
\wh{f}_3(t,\xi)=Me^{-\frac{i\pi\lambda}{2}}e^{\frac{i\pi(\lambda+4)}{10}}(\xi-i0)^{-\lambda}\psi(t)\int\frac{e^{it\xi^5}}{i(\tau'-\xi^5)}(1-\theta(\tau'-\xi^5))(\tau'-i0)^{\frac{\lambda}{5}+\frac{4}{5}}\wh{f}(\tau')\; d\tau',
\end{equation*} 
where $\theta \in \mathcal{S}(\R)$ such that $\theta(\tau)=1$ for $|\tau|\leq 1$ and $\theta(\tau)=0$ for $|\tau|\geq 2$. We know that $\psi(t)\mathcal{L}_-^{\lambda}f= f_1 + f_2 - f_3$. 

For $f_1$, we use the same argument for $w_1$ in the proof of Lemma \ref{duhamel} (c)\footnote{Here, it is not necessary to distinguish $X^{s,b}$ and $D^{\alpha}$ portions. The same will be true of $f_3$}. By the Taylor series expansion for $e^{it(\tau'-\xi^5)}$ at $it(\tau'-\xi^5) = 0$, we write 
\[
\psi(t)\mathcal{L}_-^{\lambda}f_1(t,x) = c\sum_{k=1}^{\infty}\frac{i^{k-1}}{k!}\psi^k(t)e^{t\partial_x^5}F_1^k(x),
\]
for some constant $c \in \mathbb{C}$, where $\psi^k(t) = t^k\psi(t)$ and 
\[
\wh{F}_1^k(\xi) = (\xi-i0)^{-\lambda}\int \theta(\tau'-\xi^5)(\tau'-\xi^5)^{k-1}\tau'^{\frac{\lambda}{5}+\frac{4}{5}}\wh{f}(\tau') \; d\tau'.
\]	
By \eqref{transformada2}, Lemma \eqref{grupo} (c) and the definition of $\theta$, it is enough to show that
\begin{equation}\label{crb100}
\int_{\xi}\langle \xi\rangle^{2s}|\xi|^{-2\lambda}\left|\int_{|\tau'-\xi^5|\leq 1}|\tau'-\xi^5|^{k-1}|\tau'|^{\frac{\lambda+4}{5}}|\wh{f}(\tau')| \; d\tau' \right|^2 d\xi \lesssim \norm{f}_{H^{\frac{s+2}{5}}}^2.
\end{equation}
Since both $|\xi|^{-2\lambda}$ and $|\tau'|^{\frac{2(\lambda+4)}{5}}$ for $-\frac{13}{2} < \lambda < \frac12$ are integrable on the regions $|\xi| \le 1$ and $|\tau'| \lesssim 1$ ($|\xi| \le 1$ and $|\tau'-\xi^5| \le 1$ imply $|\tau'| \lesssim 1$), respectively, we obtain \eqref{crb100} by taking Cauchy-Schwarz inequality in $\tau'$. Thus, we assume $|\xi| > 1$, which in addition to $|\tau'-\xi^5| \le 1$ implies $|\tau'| \sim |\xi|^5 > 1$. Let $\wh{f}^{\ast}(\tau') = \bra{\tau'}^{\frac{s+2}{5}}\wh{f}(\tau')$. Then, the change of variables ($\xi^5 \mapsto \eta$) gives
\[
\begin{aligned}
\mbox{LHS of } \eqref{crb100} &\lesssim \int_{|\xi| > 1}\xi^4 |M\wh{f}^{\ast}(\xi^5)|^2 \; d\xi\\
&\lesssim \int_{|\eta| > 1}|M\wh{f}^{\ast}(\eta)|^2 \; d\eta\\
&\lesssim \norm{f^{\ast}}_{L^2} = \norm{f}_{H^{\frac{s+2}{5}}},
\end{aligned}
\]
where $M\wh{f}^{\ast}$ is the Hardy-Littlewood maximal function of $\wh{f}^{\ast}$. Hence we have 
\[\norm{f_1}_{X^{s,b} \cap D^{\alpha}} \lesssim \norm{f}_{H^{\frac{s+2}{5}}}.\]

For $f_2$, from \eqref{transformada2}, the definition of inverse Fourier transform and Lemma \ref{lem:Xsb}, we have
\begin{eqnarray*}
	\|f_2\|_{X^{s,b}}^2&\lesssim& \int\int\langle \xi\rangle^{2s}|\xi|^{-2\lambda}\langle \tau-\xi^5\rangle^{2b}\frac{(1-\theta(\tau-\xi^5))^2}{|\tau-\xi^5|^2}|\tau|^{\frac{2\lambda+8}{5}}|\wh{f}(\tau)|^2 \; d\tau d\xi\\
	&\lesssim&\int  |\tau|^{\frac{2\lambda+8}{5}}\left( \int\frac{\langle \xi\rangle^{2s}|\xi|^{-2\lambda} }{\langle \tau-\xi^5\rangle^{2-2b}}\; d\xi\right)|\hat{f}(\tau)|^2 \; d\tau.
\end{eqnarray*}
Thus, by the change of variables ($\eta = \xi^5$) and Lemma \ref{sobolev0} for $-\frac{13}{2} < \lambda$ (we may assume $\supp f \subset [0,1]$, thanks to Lemma \ref{cut}), it suffices to show
\[I(\tau)=\int\frac{|\eta|^{-\frac{4}{5}-\frac{2\lambda}{5}}\langle \eta\rangle^{\frac{2s}{5}}}{\langle \tau-\eta\rangle^{2-2b}} \; d\eta \lesssim  \langle\tau\rangle^{\frac{2s}{5}-\frac{2\lambda}{5}-\frac{4}{5}}.\]
When $|\tau|\leq2$, we have $\langle\tau-\eta\rangle\sim\langle\eta\rangle$. For $s-2 < \lambda<\frac{1}{2}$ and $b < \frac12$, we have
\begin{equation*}
I(\tau)\lesssim \int_{|\eta|\leq 1} |\eta|^{\frac{-4-2\lambda}{5}}+\int\frac{d\eta}{\langle \eta\rangle^{2-2b-\frac{2s}{5}+\frac{4}{5}+\frac{2\lambda}{5}}}\lesssim 1.
\end{equation*}
When $|\tau|>2$, we divide the integral region in $\eta$ into $|\eta|< \frac{|\tau|}{2}$ and  $|\eta| \ge \frac12|\tau|$. In the former case, for $b<\frac12$ and $\lambda < \min(\frac12, s+\frac12)$, we have
\begin{equation*}
I(\tau)\lesssim \langle\tau\rangle^{2b-2}\left(\int_{|\eta| \le 1}|\eta|^{-\frac{4}{5}-\frac{2\lambda}{5}} \; d\eta +\int_{1 < |\eta|\leq \frac{|\tau|}{2}}|\eta|^{\frac{2s-4-2\lambda}{5}}d\eta \right) \lesssim \langle\tau\rangle^{\frac{2s}{5}-\frac{4}{5}-\frac{2\lambda}{5}}.
\end{equation*} 
On the other hand, we have in the second case that $|\tau-\eta| \geq \frac{1}{2} |\tau| > 1$. Then, for $s-2 < \lambda$ and $b<\frac{1}{2}$, we have 
\begin{equation*}
\begin{aligned}
I(\tau) &\lesssim \langle\tau\rangle^{\frac{2s}{5}-\frac{4}{5}-\frac{2\lambda}{5}}\int \frac{d\eta}{\langle\tau-\eta\rangle^{2-2b}} \\
&\lesssim \langle\tau\rangle^{\frac{2s}{5}-\frac{4}{5}-\frac{2\lambda}{5}}\int_{|s| > 1} \frac{ds}{|s|^{2-2b}}\\
&\lesssim \langle\tau\rangle^{\frac{2s}{5}-\frac{4}{5}-\frac{2\lambda}{5}}.
\end{aligned}
\end{equation*}
Note for the $D^{\alpha}$-portion that it suffices to show 
\[I(\tau)\lesssim \langle\tau\rangle^{2\alpha-2} \int_{|\eta| \le 1}|\eta|^{-\frac{4}{5}-\frac{2\lambda}{5}} \; d\eta \lesssim \langle\tau\rangle^{\frac{2s}{5}-\frac{4}{5}-\frac{2\lambda}{5}}.\] 
It immediately follows the same way as above for $\lambda < \min(\frac12, s+\frac12)$ thanks to $b < \frac12 < \alpha < 1-b$. Hence we have
\[\norm{f_2}_{X^{s,b} \cap D^{\alpha}} \lesssim \norm{f}_{H^{\frac{s+2}{5}}}.\]

For $f_3$, similarly as for $f_1$, it suffices to show 
\begin{equation}\label{23110}
\int\langle \xi\rangle^{2s}|\xi|^{-2\lambda}\left|\int (1-\theta(\tau'-\xi^5))|\tau'-\xi^5|^{-1}|\tau'|^{\frac{\lambda+4}{5}}|\wh{f}(\tau')| \; d\tau' \right|^2 d\xi \lesssim \norm{f}_{H^{\frac{s+2}{5}}}^2.
\end{equation}
When $|\xi| \le 1$, since $|\xi|^{-2\lambda}$ is integrable for $\lambda < \frac12$, we may ignore the integration in $\xi$. Moreover, if $|\tau'| \le 1$, $|\tau'|^{\frac{2(\lambda+4)}{5}}$ for $-\frac{13}{2} < \lambda$ is integrable, and hence we get \eqref{23110}. On the region $|\tau'| > 1$, since $|\tau' -\xi^5| \sim |\tau'|$ and $|\tau'|^{\frac{-s+\lambda -3}{5}}$ is $L^2$ integrable for $\lambda < s+\frac12$, we also get \eqref{23110} by using the Cauchy-Schwarz inequality in $\tau'$. On the other hand, when $|\xi| > 1$ and $|\tau'| \le 1$, since $|\tau' -\xi^5| \sim |\xi|^5$ and $|\xi|^{2s-2\lambda-10}$ is integrable for $s - \frac92 < s-2 < \lambda$, we also get \eqref{23110}. We thus consider the region $|\xi| > 1$ and $|\tau'| > 1$. 

There are two possibility: \textbf{I.} $|\tau'| \le \frac12|\xi|^5$, \textbf{II.} $\frac12|\xi|^5 <|\tau'|$. In view of the proof of Lemma 5.8 (d) in \cite{Holmerkdv}, one can replace $\frac{1-\theta(\tau'-\xi^5)}{\tau'-\xi^5}$ by $\beta(\tau'-\xi^5)$ for some $\beta \in \Sch(\R)$. Hence, the left-hand side of \eqref{23110} is dominated by
\begin{equation}\label{eq:23110}
\int_{|\xi| > 1}|\xi|^{2s-2\lambda}\left|\int_{|\tau'|>1} |\tau'-\xi^5|^{-N}|\tau'|^{\frac{\lambda+4}{5}}|\wh{f}(\tau')| \; d\tau' \right|^2 d\xi,
\end{equation}
for $N \ge 0$. By taking the Cauchy-Schwarz inequality and choosing $N = N(s,\lambda) \gg 1$, we have \eqref{23110} for both cases. Indeed, we have for the case $I$ (in this case, we have $|\tau'-\xi^5| \sim |\xi|^5$) that 
\[\eqref{eq:23110} \lesssim \int_{|\xi| > 1}|\xi|^{2s-2\lambda-10N}\left|\int_{1 < |\tau'| \le \frac12|\xi|^5} |\tau'|^{\frac{-s + \lambda+2}{5}}|\tau'|^{\frac{s+2}{5}}|\wh{f}(\tau')| \; d\tau' \right|^2 d\xi \lesssim \norm{f}_{H^{\frac{s+2}{5}}}^2\]
and for the case $II$ (in this case, we have $|\tau'-\xi^5| \sim |\tau'|$) that
\[\eqref{eq:23110} \lesssim \int_{|\xi| > 1}|\xi|^{2s-2\lambda}\left|\int_{\frac12|\xi|^5 <  |\tau'|} |\tau'|^{\frac{-s + \lambda+2 - 5N}{5}}|\tau'|^{\frac{s+2}{5}}|\wh{f}(\tau')| \; d\tau' \right|^2 d\xi \lesssim \norm{f}_{H^{\frac{s+2}{5}}}^2.\]
Hence we have
\[\norm{f_2}_{X^{s,b} \cap D^{\alpha}} \lesssim \norm{f}_{H^{\frac{s+2}{5}}},\]
and we complete the proof of (c).
\end{proof}

\section{Bilinear estimates}\label{sec:bilinear}
In this section, we are going to prove bilinear estimates.
\subsection{$L^2$-block estimates}
For $\xi_1,\xi_2 \in \R$, let
\begin{equation}\label{eq:resonant function}
\begin{aligned}
H = H(\xi_1,\xi_2) &= (\xi_1 + \xi_2)^5 - \xi_1^5 -\xi_2^5 \\
&=\frac52\xi_1\xi_2(\xi_1+\xi_2)(\xi_1^2+\xi_2^2+(\xi_1+\xi_2)^2)
\end{aligned}
\end{equation}
be the resonance function, which plays an crucial role in the bilinear $X^{s,b}$-type estimates. For compactly supported functions $f,g,h \in L^2(\R^2)$, we define
\[J(f,g,h) = \int_{\R^4} f(\zeta_1,\xi_1)g(\zeta_2,\xi_2)h(\zeta_1+\zeta_2+H(\xi_1,\xi_2), \xi_1+\xi_2) \; d\xi_1d\xi_2d\zeta_1\zeta_2. \]
By the change of variables in the integration, we know
\[J(f,g,h)=J(\wt{g},h,f)=J(h,\wt{f},g),\]
where $\tilde{f}(\zeta,\xi)=f(-\zeta,-\xi)$. Note from the definition of convolution operation that the first two components in the functional $J$ can be freely changed each other without any difference.

From the identities
\begin{equation}\label{eq:symmetry1}
\xi_1 + \xi_2 = \xi_3
\end{equation}
and
\begin{equation}\label{eq:symmetry2}
(\tau_1 - \xi_1^5)+(\tau_2 - \xi_2^5) = (\tau_3 - \xi_3^5) + H(\xi_1,\xi_2)
\end{equation}
on the support of $J(f^{\sharp},g^{\sharp},h^{\sharp})$, where $f^{\sharp}(\tau,\xi) = f(\tau-\xi^5,\xi)$ with the property $\norm{f}_{L^2} = \norm{f^{\sharp}}_{L^2}$, we see that $J(f^{\sharp},g^{\sharp},h^{\sharp})$ vanishes unless
\begin{equation}\label{eq:support property}
\begin{array}{c}
2^{k_{max}} \sim 2^{k_{med}} \gtrsim 1\\
2^{j_{max}} \sim \max(2^{j_{med}}, |H|).
\end{array}
\end{equation}

\begin{lemma}\label{lem:block estimate} 
	Let $k_i \in \Z,j_i\in \Z_+,i=1,2,3$. Let $f_{k_i,j_i} \in L^2(\R\times\R) $ be nonnegative functions supported in $[2^{k_i-1},2^{k_i+1}]\times I_{j_i}$.
	
	(a) For any $k_1,k_2,k_3 \in \Z$ with $|k_{max}-k_{min}| \le 5$ and $j_1,j_2,j_3 \in \Z_+$, then we have
	\[J(f_{k_1,j_1},f_{k_2,j_2},f_{k_3,j_3}) \lesssim 2^{j_{min}/2}2^{j_{med}/4}2^{- \frac34 k_{max}}\prod_{i=1}^3 \|f_{k_i,j_i}\|_{L^2}.\]
	
	(b) If $2^{k_{min}} \ll 2^{k_{med}} \sim 2^{k_{max}}$, then for all $i=1,2,3$ we have
	\[J(f_{k_1,j_1},f_{k_2,j_2},f_{k_3,j_3}) \lesssim 2^{(j_1+j_2+j_3)/2}2^{-3k_{max}/2}2^{-(k_i+j_i)/2}\prod_{i=1}^3 \|f_{k_i,j_i}\|_{L^2}.\]
	
	(c) For any $k_1,k_2,k_3 \in \Z$ and $j_1,j_2,j_3 \in \Z_+$, then we have
	\[J(f_{k_1,j_1},f_{k_2,j_2},f_{k_3,j_3}) \lesssim 2^{j_{min}/2}2^{k_{min}/2}\prod_{i=1}^3 \|f_{k_i,j_i}\|_{L^2}.\]
\end{lemma}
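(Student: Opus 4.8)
The plan is to follow Tao's $[k;Z]$-multiplier approach and reduce each of (a)--(c) to a Cauchy--Schwarz bound of the shape
\[
J(f_{k_1,j_1},f_{k_2,j_2},f_{k_3,j_3}) \lesssim |E|^{1/2}\prod_{i=1}^3\|f_{k_i,j_i}\|_{L^2},
\]
where, after keeping one of the three functions fixed (taking the supremum over its two variables) and integrating the other two against the convolution constraints \eqref{eq:symmetry1}--\eqref{eq:symmetry2}, the set $E$ collects the admissible values of the single remaining free variable $(\zeta,\xi)$. The symmetry relations $J(f,g,h)=J(\tilde g,h,f)=J(h,\tilde f,g)$, together with the freedom to interchange the first two slots, yield full permutation symmetry (the tilde leaves $L^2$ norms unchanged), so I may always choose which function to keep and which modulation/frequency to place in the free slot. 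Throughout I use the resonance identity $H(\xi_1,\xi_2)=\zeta_1+\zeta_2-\zeta_3$, valid on the support of $J$ by \eqref{eq:symmetry2}, to turn a modulation constraint into a constraint on the free frequency through the phase $\xi\mapsto H$.

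\textbf{Part (c).} This is the crudest bound. Keeping one function and integrating the free variable $(\zeta,\xi)$, I estimate $|E|$ trivially by the product of the free modulation window and the free frequency window. The modulation window is at most $2^{j}$ for the chosen free function, and the frequency window is $\min(2^{k_a},2^{k_b})$ over the two non-kept frequencies, because of $\xi_1+\xi_2=\xi_3$. Using the permutation symmetry I arrange the free function to carry the smallest modulation and one non-kept function to carry the smallest frequency, which gives $|E|\lesssim 2^{j_{min}}2^{k_{min}}$ and hence the claim.

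\textbf{Part (a).} Here all frequencies are comparable, $|\xi_i|\sim 2^{k_{max}}$, and the gain must come from the dispersion. I keep the function carrying the \emph{largest} modulation, integrate honestly over the modulation variable carrying $j_{min}$ (this produces a factor $2^{j_{min}}$), and convert the remaining modulation constraint, via $H=\zeta_1+\zeta_2-\zeta_3$, into confining the free frequency to a sublevel set of $\xi\mapsto H$ of width $\sim 2^{j_{med}}$. The first derivative $\partial_{\xi_1}H|_{\xi_3}=5(\xi_2-\xi_1)\xi_3(\xi_1^2+\xi_2^2)$ \emph{degenerates} along $\xi_1=\xi_2$, so a co-area count is unavailable; but the second derivative $\partial_{\xi_1}^2H|_{\xi_3}=-20\,\xi_3(\xi_1^2-\xi_1\xi_2+\xi_2^2)$ satisfies $|\partial_{\xi_1}^2H|\sim 2^{3k_{max}}$ uniformly, since $|\xi_3|\sim 2^{k_{max}}\neq 0$ by \eqref{eq:support property} and $\xi_1^2-\xi_1\xi_2+\xi_2^2\sim 2^{2k_{max}}$. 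A van der Corput / sublevel-set estimate then gives
\[
\big|\{\xi : |H(\xi,\xi_3-\xi)-c|\le 2^{j_{med}}\}\big|\lesssim \big(2^{j_{med}}\,2^{-3k_{max}}\big)^{1/2},
\]
whence $|E|\lesssim 2^{j_{min}}2^{j_{med}/2}2^{-3k_{max}/2}$ and $|E|^{1/2}$ is exactly the asserted constant.

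\textbf{Part (b).} In the regime $2^{k_{min}}\ll 2^{k_{med}}\sim 2^{k_{max}}$ the phase no longer degenerates, so a first-derivative estimate suffices and is sharper. For each fixed $i$ I keep $f_{k_i,j_i}$ and integrate the other two: one modulation variable is integrated directly (window $2^{j}$), while the other modulation constraint is converted into a frequency sublevel set, to which the bound $|\{\xi:|H-c|\le 2^{j}\}|\lesssim 2^{j}/|\partial H|$ applies. The decisive algebraic point is that, in factored form (e.g.\ $\partial_{\xi_1}H|_{\xi_3}=5(\xi_2-\xi_1)\xi_3(\xi_1^2+\xi_2^2)$ and its cyclic analogues), frequency separation forces each factor to be of maximal size, giving the uniform lower bound $|\partial H|\gtrsim 2^{3k_{max}}2^{k_i}$. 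This produces two half-power modulation factors together with $2^{-3k_{max}/2}2^{-k_i/2}$, i.e.\ the stated bound after accounting for the kept function's modulation via $(j_1+j_2+j_3)-j_i$.

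\textbf{Main obstacle.} I expect Part (a) to be the decisive difficulty: precisely because all three frequencies are comparable, the first derivative of the resonance phase vanishes along $\xi_1=\xi_2$, so the naive co-area bound fails and one must descend to the second-derivative/sublevel estimate. Establishing the uniform lower bound $|\partial_\xi^2 H|\sim 2^{3k_{max}}$ (which hinges on $\xi_3\neq 0$, guaranteed here by \eqref{eq:support property}) and correctly tracking the resulting quarter-powers is the technical heart of the lemma; the separated-frequency case (b) and the trivial case (c) are comparatively routine once the symmetry reductions are in place.
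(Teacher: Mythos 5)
Your proposal is correct, but note that the paper does not actually prove Lemma \ref{lem:block estimate}: it records that the estimates are due to Chen--Li--Miao--Wu \cite{CLMW2009}, with an error in the \emph{high $\times$ high $\Rightarrow$ high} case corrected by Chen--Guo \cite{CG2011}, and refers the reader to those papers for the proof. What you have written is, in substance, a reconstruction of that cited argument: the Cauchy--Schwarz reduction $J\lesssim \sup_{\zeta_3,\xi_3}|E(\zeta_3,\xi_3)|^{1/2}\prod_{i=1}^3\|f_{k_i,j_i}\|_{L^2}$, the pure window count for (c), the first-derivative co-area bound for (b), and, for (a), the second-derivative sublevel-set estimate forced by the vanishing of $\partial_{\xi_1}H=5(\xi_2-\xi_1)\xi_3(\xi_1^2+\xi_2^2)$ on $\xi_1=\xi_2$, where one instead uses $|\partial_{\xi_1}^2H|=20|\xi_3|(\xi_1^2-\xi_1\xi_2+\xi_2^2)\sim 2^{3k_{max}}$. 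That degeneracy is precisely the point on which \cite{CLMW2009} erred and \cite{CG2011} gave the correction, so your ``main obstacle'' diagnosis is both mathematically and historically on target, and your exponents reproduce the stated bounds exactly: $|E|\lesssim 2^{j_{min}}2^{j_{med}/2}2^{-3k_{max}/2}$ in (a), and $|E|\lesssim 2^{j_a+j_b}2^{-3k_{max}}2^{-k_i}$ in (b), which after halving gives $2^{(j_1+j_2+j_3)/2}2^{-3k_{max}/2}2^{-(k_i+j_i)/2}$. Two phrasings deserve tightening, though neither is a gap. In (c), the precise selection rule is that the kept function should carry neither $j_{min}$ nor $k_{min}$ (if a single function carries both, keep either of the other two); then both minima survive among the two integrated functions and $|E|\lesssim 2^{j_{min}}2^{k_{min}}$ follows. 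In (b), the claim that frequency separation makes ``each factor of maximal size'' in $\partial_{\xi_1}H$ is not literally true when the kept frequency is the small one: in that case the factor $\xi_3=\xi_i$ has size $2^{k_{min}}$, while the two large, nearly opposite free frequencies give $|\xi_2-\xi_1|\sim 2^{k_{max}}$ and $\xi_1^2+\xi_2^2\sim 2^{2k_{max}}$; the product bound $|\partial H|\gtrsim 2^{3k_{max}}2^{k_i}$ that your argument actually uses still holds in every case, so the conclusion is unaffected.
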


$L^2$-block estimates of fifth-order equations on $\R$ and $\T$ have been used in the proof of multilinear estimates, see \cite{CLMW2009, CG2011, GKK2013, KP2015, Kwak2016, Kwak2015}. The proof was first shown by Chen, Li, Miao and Wu \cite{CLMW2009}. But there was an error in the \emph{high $\times$ high $\Rightarrow$ high} case and was corrected in \cite{CG2011}. See \cite{CLMW2009, CG2011} for the proof. 

\subsection{Control of $X^{s,-b}$ norm of nonlinear terms}

\begin{proposition}\label{prop:bi1}
	For $-7/4 < s $, there exists $b = b(s) < 1/2$ such that for all $\alpha > 1/2$, we have
	\begin{equation}\label{eq:bilinear1}
	\norm{\px(uv)}_{X^{s,-b}} \le c\norm{u}_{X^{s,b} \cap D^{\alpha}}\norm{v}_{X^{s,b} \cap D^{\alpha}}.
	\end{equation}
\end{proposition}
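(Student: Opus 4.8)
The plan is to establish \eqref{eq:bilinear1} by duality together with a Littlewood--Paley decomposition, reducing matters to the $L^2$-block estimates of Lemma~\ref{lem:block estimate}. First I would dualize: writing the $X^{s,-b}$ norm against a test function $w$ with $\norm{w}_{X^{-s,b}}\le1$ and using Plancherel, the left-hand side of \eqref{eq:bilinear1} is controlled by a trilinear expression of the form $J$ with multiplier
\[
m=\frac{|\xi_3|\,\bra{\xi_3}^s}{\bra{\xi_1}^s\bra{\xi_2}^s\,\bra{\sigma_1}^b\bra{\sigma_2}^b\bra{\sigma_3}^b},\qquad \sigma_i=\tau_i-\xi_i^5,
\]
where $\xi_3=\xi_1+\xi_2$ and the modulations obey the resonance identity \eqref{eq:symmetry2}. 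Setting $f=\bra{\xi}^s\bra{\sigma}^b|\wh u|$, $g=\bra{\xi}^s\bra{\sigma}^b|\wh v|$ and $h=\bra{\xi}^{-s}\bra{\sigma}^b|\wh w|$, so that $\norm{f}_{L^2}=\norm{u}_{X^{s,b}}$, $\norm{g}_{L^2}=\norm{v}_{X^{s,b}}$ and $\norm{h}_{L^2}\le1$, I would decompose into dyadic frequency--modulation pieces $f_{k_1,j_1},g_{k_2,j_2},h_{k_3,j_3}$ and reduce \eqref{eq:bilinear1} to summing $\sup m\cdot J(f_{k_1,j_1},g_{k_2,j_2},h_{k_3,j_3})$ over all dyadic parameters, each term bounded via Lemma~\ref{lem:block estimate}.

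Second, I would organize the sum according to the support property \eqref{eq:support property}, which forces $2^{k_{max}}\sim2^{k_{med}}\gtrsim1$ and $2^{j_{max}}\sim\max(2^{j_{med}},|H|)$ with $H$ as in \eqref{eq:resonant function}. This splits the analysis into the comparable-frequency regime $|k_{max}-k_{min}|\le5$, treated by Lemma~\ref{lem:block estimate}(a), and the separated regime $2^{k_{min}}\ll2^{k_{max}}$, treated by Lemma~\ref{lem:block estimate}(b); the latter divides further into \emph{high}$\times$\emph{low}$\Rightarrow$\emph{high} and \emph{high}$\times$\emph{high}$\Rightarrow$\emph{low} subcases. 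In each case, after bounding $m$ by the appropriate product of powers of $2^{k_i}$ and $2^{j_i}$ (using $\bra{\xi_i}\sim2^{k_i}$, $\bra{\sigma_i}\sim2^{j_i}$), I would insert the block bound and reduce to a geometric summation over the $j_i$ and $k_i$. The resonance lower bound $2^{j_{max}}\gtrsim|H|\sim2^{4k_{max}+k_{min}}$ available in the separated regime is the crucial gain that must compensate the Sobolev loss $2^{-2sk_{max}}$ coming from $m$ when $s<0$.

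The delicate point, and the expected main obstacle, is the \emph{high}$\times$\emph{high}$\Rightarrow$\emph{low} interaction combined with the constraint $b<\frac12$: since the modulation exponent is below $\frac12$, the powers $2^{j_i(\frac12-b)}$ produced by Lemma~\ref{lem:block estimate}(b) are nonnegative and do not sum by themselves, so one must genuinely trade $2^{j_{max}}\gtrsim|H|$ for frequency decay. Tracking exponents in the worst arrangement (largest modulation on a high-frequency input, with the second modulation comparable) yields after summation a frequency factor whose convergence over $2^{k_{max}}$ and $2^{k_{min}}$ requires exactly $s>-\frac74$ once $b$ is chosen sufficiently close to $\frac12$; this reproduces the sharp threshold and the counterexample of \cite{Kato2011}. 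Finally, I would handle the contributions in which an input frequency is $\lesssim1$ by replacing the corresponding $X^{s,b}$ factor with the $D^{\alpha}$ norm: on $\{|\xi|\le1\}$ one has $\bra{\sigma}\sim\bra{\tau}$, so $\alpha>\frac12$ supplies the modulation power that $b<\frac12$ cannot, restoring summability in the low-frequency regime and closing the estimate.
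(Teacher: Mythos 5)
Your proposal is correct and follows essentially the same route as the paper's proof: duality reduces \eqref{eq:bilinear1} to the trilinear functional $J$, a frequency--modulation dyadic decomposition combined with the $L^2$-block estimates of Lemma \ref{lem:block estimate} handles each interaction regime, the resonance bound $2^{j_{max}}\gtrsim |H|$ is traded for frequency decay in the \emph{high}$\times$\emph{high}$\Rightarrow$\emph{low} case (which is indeed where $s>-\frac74$ and $b$ close to $\frac12$ become necessary), and the $D^{\alpha}$ norm with $\alpha>\frac12$ supplies the missing modulation weight on low-frequency inputs, exactly as the paper implements via the weights $\beta_i$ in \eqref{eq:restriction2}. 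No gaps beyond the exponent bookkeeping you explicitly defer, which the paper carries out case by case.
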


\begin{proof}
	Let
	\begin{equation}\label{eq:restriction1}
	\wt{f}_1(\tau_1,\xi_1) = \beta_1(\tau_1,\xi_1)\wt{u}(\tau_1,\xi_1) \hspace{1em} \mbox{and} \hspace{1em} \wt{f}_2(\tau_2,\xi_2) = \beta_2(\tau_2,\xi_2)\wt{v}(\tau_2,\xi_2),
	\end{equation}
	where
	\begin{equation}\label{eq:restriction2}
	\beta_i(\tau_i,\xi_i) = \bra{\tau_i-\xi_i^5}^b + \mathbf{1}_{|\xi_i|\le1}(\xi_i)\bra{\tau_i}^{\alpha}, \hspace{1em} i=1,2.
	\end{equation}
	Note that
	\begin{equation}\label{eq:weight}
	\begin{aligned}
	\frac{1}{\beta_i(\tau_i,\xi_i)} \lesssim \begin{cases}\begin{array}{ll}\bra{\tau_i-\xi_i^5}^{-b}, & \hspace{1em} \mbox{when} \hspace{1em} |\xi_i| > 1, \\ \bra{\tau_i}^{-\alpha}, & \hspace{1em} \mbox{when} \hspace{1em} |\xi_i| \le 1.  \end{array}\end{cases}
	\end{aligned}
	\end{equation}
	From the duality in addition to the fact that $f_1, f_2 \in L^2 \Leftrightarrow u, v \in X^{s,b} \cap D^{\alpha}$, \eqref{eq:bilinear1} is equivalent to 
	\begin{equation}\label{eq:bilinear1-1}
	\iint\limits_{\substack{\xi_1+\xi_2=\xi \\ \tau_1+\tau_2=\tau}} \frac{|\xi|\bra{\xi}^s\wt{f}_1(\tau_1,\xi_1)\wt{f}_2(\tau_2,\xi_2)\wt{f}_3(\tau,\xi)}{\bra{\xi_1}^s\bra{\xi_2}^s\bra{\tau-\xi^5}^b\beta_1(\tau_1,\xi_1)\beta_2(\tau_2,\xi_2)} \lesssim \norm{f_1}_{L^2}\norm{f_2}_{L^2}\norm{f_3}_{L^2}.
	\end{equation}
	For $k_i,j_i \in \Z_+$, we make a dyadic decomposition of $f_i$, $i=1,2,3$, in both frequency and modulation modes into $f_{k_i,j_i}$, $i=1,2,3$, as $f_{k_i,j_i}(\tau,\xi) = \eta_{j_i}(\tau-\xi^5)\chi_{k_i}(\xi)\wt{f}_i(\tau,\xi)$. We prove \eqref{eq:bilinear1-1} by dividing the region of integration 
	\begin{equation}\label{eq:bilinear1-2}
	\iint\limits_{\substack{\xi_1+\xi_2=\xi \\ \tau_1+\tau_2=\tau}} \frac{|\xi|\bra{\xi}^s\wt{f}_1(\tau_1,\xi_1)\wt{f}_2(\tau_2,\xi_2)\wt{f}_3(\tau,\xi)}{\bra{\xi_1}^s\bra{\xi_2}^s\bra{\tau-\xi^5}^b\beta_1(\tau_1,\xi_1)\beta_2(\tau_2,\xi_2)}.
	\end{equation}
	into several regions associated to the relation of frequencies.
	
	\textbf{Case I.} \emph{high $\times$ high $\Rightarrow$ high} ($k_3 \ge 10, |k_3-k_1|,|k_3-k_2|\le 5$). In this case, we have $j_{max} \ge 5k_3 -5$ thanks to \eqref{eq:support property} and \eqref{eq:resonant function}. The change of variables in addition to \eqref{eq:weight} enables that \eqref{eq:bilinear1-2} is bounded by
	\[\sum_{\substack{k_3 \ge 10 \\|k_3-k_1| \le 5 \\ |k_3-k_2|\le 5}}\sum_{j_1,j_2,j_3 \ge 0} 2^{(1-s)k_3}2^{-b(j_1+j_2+j_3)}J(f_{k_1,j_1}^{\sharp},f_{k_2,j_2}^{\sharp},f_{k_3,j_3}^{\sharp}).\]
	From Lemma \ref{lem:block estimate} (a), the Cauchy-Schwarz inequality and \eqref{eq:dyadic X}, it suffices to show
	\begin{equation}\label{eq:bilinear1-3}
	\sum_{\substack{k_3 \ge 10 \\|k_3-k_1| \le 5 \\ |k_3-k_2|\le 5}}\sum_{j_1,j_2,j_3 \ge 0} 2^{2(1-s)k_3}2^{-2b(j_1+j_2+j_3)}2^{j_{min}}2^{j_{med}/2}2^{- \frac32 k_{max}} \lesssim 1.
	\end{equation}
	Without loss of generality, we may assume that $j_1 \le j_2 \le j_3$.  Given $-9/4 < s$, we can choose $\max(3/8 , 1/20-s/5) < b < 1/2$.  Performing the summation over $0 \le j_1 \le j_2 \le j_3$ in addition to $5k_3 - 5 \le j_3$ and $k_i$, $i=1,2,3$ yields
	\[\mbox{LHS of } \eqref{eq:bilinear1-3} \lesssim  \sum_{k_3 \ge 10} 2^{(1/2 -2s)k_3}2^{-10bk_3} \lesssim 1,\]
	which completes the proof of \eqref{eq:bilinear1-3}. 
	
	\textbf{Case II} \emph{high $\times$ low $\Rightarrow$ high} ($k_3 \ge 10, 0 \le k_1 \le k_3-5, |k_3-k_2| \le 5$).\footnote{Due to the symmetry, this case is exactly same as the case when $k_3 \ge 10, 0 \le k_2 \le k_3-5, |k_3-k_1| \le 5$.} We further divide the case into two cases: $k_1 = 0$ and $k_1 \ge 1$.
	
	\textbf{Case II-a} $k_1 = 0$. Without loss of generality, we may assume that $j_2 \le j_3$. Similarly as \textbf{Case I}, it suffices to show
	\begin{equation}\label{eq:bilinear1-4}
	\sum_{\substack{k_3 \ge 10 \\ |k_3-k_2|\le 5}}\sum_{j_1,j_2,j_3 \ge 0} 2^{2k_3}2^{-2\alpha j_1}2^{-2b(j_2+j_3)}2^{(j_1+j_2+j_3)}2^{-3k_{max}}2^{-(k_3+j_3)} \lesssim 1,
	\end{equation}
	thanks to \eqref{eq:weight} and Lemma \ref{lem:block estimate} (b).  Note that the estimate \eqref{eq:bilinear1-4} is irrelevant to the regularity $s$. By choosing $1/4 < b <1/2$,  we have
	\[\sum_{\substack{0 \le j_1 \\0 \le j_2 \le j_3}} 2^{(1-2\alpha)j_1}2^{(1-2b)j_2}2^{-2bj_3} \lesssim \sum_{0 \le j_1, j_3} 2^{(1-2\alpha)j_1}2^{(1-4b)j_3} \lesssim 1,\]
	for all $\alpha > 1/2$. Moreover, the estimate 
	\[\sum_{k_3 \ge 10 } 2^{-2k_3} \lesssim 1\]
	holds true. This completes the proof of \eqref{eq:bilinear1-4}.
	
	\textbf{Case II-b} $k_1 \ge 1$. In this case, we have from \eqref{eq:support property} and \eqref{eq:resonant function} that $j_{max} \ge 4k_3 + k_1 - 5$. Similarly as \textbf{Case I}, it suffices to show
	\begin{equation}\label{eq:bilinear1-5}
	\sum_{\substack{k_3 \ge 10 \\ 1 \le k_1 \le k_3 - 5 \\ |k_3-k_2|\le 5}}\sum_{j_1,j_2,j_3 \ge 0} 2^{2k_3}2^{-2sk_1}2^{-2b(j_1+j_2+j_3)}2^{(j_1+j_2+j_3)}2^{-3k_{max}}2^{-(k_i+j_i)} \lesssim 1,
	\end{equation}
	thanks to \eqref{eq:weight} and Lemma \ref{lem:block estimate} (b). Without loss of generality, we may assume that $j_2 \le j_3$. 
	
	If $j_1 \neq j_{max}$,  given $-7/2 < s$, we can choose $\max((4-s)/15 , 1/4) < b < 1/2$.  Performing the summation over $0 \le j_1, j_2 \le j_3$ in addition to $4k_3 + k_1 - 5 \le j_3$ after choosing $(k_i,j_i) = (k_3,j_3)$ yields
	\[\mbox{LHS of } \eqref{eq:bilinear1-4} \lesssim  \sum_{k_3 \ge 10  }\sum_{1 \le k_1 \le k_3 - 5} 2^{(6-24b)k_3}2^{(2-6b-2s)k_1} \lesssim 1.\]
	
	If $j_1 = j_{max}$,  given $-7/2 < s$, we can choose $\max((4-s)/15 , 7/24) < b < 1/2$.  Perform the summation over $0 \le j_2 \le j_3 \le j_1$ in addition to $4k_3 + k_1 - 5 \le j_1$ after choosing $(k_i,j_i) = (k_1,j_1)$ yields
	\[\mbox{LHS of } \eqref{eq:bilinear1-4} \lesssim  \sum_{k_3 \ge 10  }\sum_{1 \le k_1 \le k_3 - 5} 2^{(7-24b)k_3}2^{(1-6b-2s)k_1} \lesssim 1.\] 
Hence,  given $-7/2 < s$, by choosing $\max((4-s)/15 , 7/24) < b < 1/2$ , we can complete the proof of \eqref{eq:bilinear1-5}.  
	
	\textbf{Case III.} \emph{high $\times$ high $\Rightarrow$ low} ($k_2 \ge 10, |k_1-k_2| \le 5, 0 \le k_3 \le k_2 - 5$). We further divide the case into two cases: $k_3 = 0$ and $k_3 \ge 1$.
	
	\textbf{Case III-a} $k_3 = 0$. In this case, we decompose further the low frequency component $f_3=\sum_{l\leq 0}f_{3}^{l}$ with $f_{3}^l=\ft^{-1}1_{|\xi|\sim 2^l}\ft f_3$. Then, from \eqref{eq:weight}, \eqref{eq:bilinear1-2} is bounded by
	\begin{equation}\label{eq:hhl}
	\sum_{\substack{k_2 \ge 10 \\ |k_1-k_2| \le 5}}\sum_{l\le0}\sum_{j_1,j_2,j_3 \ge 0} 2^{-2sk_2}2^{l}2^{-b(j_1+j_2+j_3)}J(f_{k_1,j_1}^{\sharp},f_{k_2,j_2}^{\sharp},f_{l,j_3}^{\sharp}),
	\end{equation}
	where $f_{l,j_3}(\tau,\xi) = \eta_{j_3}(\tau-\xi^5)\wt{f}_{3}^l(\tau,\xi)$. Without loss of generality, we may assume that $j_1 \le j_2$.  
	
	If $j_3=j_{max}$, applying Lemma \ref{lem:block estimate} (b) to $J(f_{k_1,j_1}^{\sharp},f_{k_2,j_2}^{\sharp},f_{l,j_3}^{\sharp})$  and performing the Cauchy-Schwarz inequality in terms of $k_2,l,j_i's$ yield
	\[\eqref{eq:hhl} \lesssim \sum_{\substack{k_2 \ge 10 \\ |k_1-k_2| \le 5}}\sum_{l\le 0}\sum_{0 \le j_1 \le j_2 \le j_3} 2^{-4sk_2}2^{2l}2^{-2b(j_1+j_2+j_3)}2^{(j_1+j_2+j_3)}2^{-3k_{max}}2^{-(l+j_3)}.\]
	Given $s > -7/4$, we can choose $\max((5-4s)/24 , 1/3) < b < 1/2$.  Since $j_{max} \ge 4k_2 + l - 5$ and $1/3 <b < 1/2$, we have
	\[\sum_{\substack{0 \le j_1 \le j_2 \le j_3 \\4k_2 + l - 5 \le j_3}} 2^{(1-2b)j_1}2^{(1-2b)j_2}2^{-2bj_3} \lesssim 2^{(2-6b)(4k_2+l)},\]
 which implies
	\[\eqref{eq:hhl} \lesssim \sum_{k_2 \ge 10}\sum_{l\le 0}2^{(5-4s-24b)k_2}2^{(3-6b)l} \lesssim 1.\]
	
	If $j_3 \neq j_{max}$ ($j_2 = j_{max}$), similarly as before, we have
	\[\eqref{eq:hhl} \lesssim \sum_{\substack{k_2 \ge 10 \\ |k_1-k_2| \le 5}}\sum_{l\le 0}\sum_{0 \le j_1, j_3 \le j_2} 2^{-4sk_2}2^{2l}2^{-2b(j_1+j_2+j_3)}2^{(j_1+j_2+j_3)}2^{-3k_{max}}2^{-(k_2+j_2)}.\]
	Given $s > -2$, we can choose $\max((1-s)/6 , 1/3) < b < 1/2$.  Since $j_{max} \ge 4k_2 + l - 5$ and $1/3 <b < 1/2$, we have
	\[\sum_{\substack{0 \le j_1,j_3 \le j_2 \\4k_2 + l - 5 \le j_2}} 2^{(1-2b)j_1}2^{-2bj_2}2^{(1-2b)j_3} \lesssim 2^{(2-6b)(4k_2+l)},\]
	which implies
	\[\eqref{eq:hhl} \lesssim \sum_{k_2 \ge 10}\sum_{l\le 0}2^{(4-4s-24b)k_2}2^{(4-6b)l} \lesssim 1.\]
		
	\textbf{Case III-b} $k_3 \ge 1$. In this case, we have from \eqref{eq:support property} and \eqref{eq:resonant function} that $j_{max} \ge 4k_2 + k_3 - 5$. Similarly as \textbf{Case I}, it suffices to show
	\begin{equation}\label{eq:bilinear1-8}
	\sum_{\substack{k_2 \ge 10 \\ |k_1-k_2| \le 5 \\ 1 \le k_3 \le k_2 -5}}\sum_{j_1,j_2,j_3 \ge 0} 2^{2(1+s)k_3}2^{-4sk_2}2^{-2b(j_1+j_2+j_3)}2^{(j_1+j_2+j_3)}2^{-3k_{max}}2^{-(k_i+j_i)} \lesssim 1,
	\end{equation}
	thanks to \eqref{eq:weight} and Lemma \ref{lem:block estimate} (b). Without loss of generality, we may assume that $j_1 \le j_2$. 
	
	If $j_3 \neq j_{max}$ ($j_2 = j_{max}$),  given $ -2 < s$, we can choose $\max((1-s)/6 , (4-s)/15) < b < 1/2$.  Performing the summation over $0 \le j_1, j_3 \le j_2$ in addition to $4k_2 + k_3 - 5 \le j_2$ after choosing $(k_i,j_i) = (k_2,j_2)$ yields
	\[\mbox{LHS of } \eqref{eq:bilinear1-8} \lesssim  \sum_{k_2 \ge 10 }\sum_{1 \le k_3 \le k_2 -5} 2^{(4-4s-24b)k_2}2^{(4+2s-6b)k_3}\lesssim 1.\]
	
	If $j_3 = j_{max}$,  given $-7/4 < s$, we can choose $ \max((5-4s)/24, (4-s)/15) < b < 1/2$.  Performing the summation over $0 \le j_1 \le j_2 \le j_3$ in addition to $4k_2 + k_3 - 5 \le j_3$ after choosing $(k_i,j_i) = (k_3,j_3)$ yields
	\[\mbox{LHS of } \eqref{eq:bilinear1-8} \lesssim  \sum_{\substack{k_2 \ge 10 \\ |k_1-k_2| \le 5 \\ 1 \le k_3 \le k_2 -5}} 2^{(5-4s-24b)k_2}2^{(3+2s-6b)k_3} \lesssim 1,\] 
	which completes the proof of \eqref{eq:bilinear1-8}.
	
The \emph{low $\times$low $\Rightarrow$low} interaction component can be directly controlled by the Cauchy-Schwarz inequality, since  the low frequency localized space $D^{\alpha}$ with $\alpha >1/2$ allows the $L^2$ integrability with respect to $\tau$-variables.
	
	Therefore, the proof of \eqref{eq:bilinear1} is completed.
\end{proof}

\subsection{Control of $Y^{s,-b}$ norm of nonlinear terms}
\begin{proposition}\label{prop:bi2}
	For $-7/4 < s < 5/2$, there exists $b = b(s) < 1/2$ such that for all $\alpha > 1/2$, we have
	\begin{equation}\label{eq:bilinear2}
	\norm{\px(uv)}_{Y^{s,-b}} \le c\norm{u}_{X^{s,b} \cap D^{\alpha}}\norm{v}_{X^{s,b} \cap D^{\alpha}}.
	\end{equation}
\end{proposition}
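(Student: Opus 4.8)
The plan is to derive \eqref{eq:bilinear2} from the $X^{s,-b}$ estimate of Proposition \ref{prop:bi1} together with the $L^2$-block estimates of Lemma \ref{lem:block estimate}, by observing that the $Y^{s,-b}$ and $X^{s,-b}$ norms differ only through the output weight $\bra\tau^{s/5}$ in place of $\bra\xi^s$. I would first dualize exactly as for Proposition \ref{prop:bi1}: writing $f_1,f_2$ as in \eqref{eq:restriction1}--\eqref{eq:restriction2} and testing against an $L^2$ function $f_3$, the estimate \eqref{eq:bilinear2} is equivalent to
\[
\iint\limits_{\substack{\xi_1+\xi_2=\xi \\ \tau_1+\tau_2=\tau}} \frac{|\xi|\,\bra{\tau}^{s/5}\,\wt{f}_1(\tau_1,\xi_1)\wt{f}_2(\tau_2,\xi_2)\wt{f}_3(\tau,\xi)}{\bra{\xi_1}^s\bra{\xi_2}^s\bra{\tau-\xi^5}^b\beta_1(\tau_1,\xi_1)\beta_2(\tau_2,\xi_2)}\lesssim\norm{f_1}_{L^2}\norm{f_2}_{L^2}\norm{f_3}_{L^2},
\]
which is precisely \eqref{eq:bilinear1-1} with $\bra\xi^s$ replaced by $\bra\tau^{s/5}$.

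The key point is a pointwise comparison of the two output weights. Since $\tau=\xi^5+(\tau-\xi^5)$, the elementary bounds $\bra\tau\lesssim\bra\xi^5\bra{\tau-\xi^5}$ and $\bra\xi^5\lesssim\bra\tau\bra{\tau-\xi^5}$ hold; raising the first to the power $s/5\ge0$ and the second to the power $s/5<0$ gives, in either case,
\[
\bra{\tau}^{s/5}\,\bra{\tau-\xi^5}^{-b}\lesssim\bra{\xi}^{s}\,\bra{\tau-\xi^5}^{\frac{|s|}{5}-b}, \qquad s\in\R.
\]
Inserting this into the trilinear form reduces \eqref{eq:bilinear2} to the asymmetric estimate
\[
\norm{\px(uv)}_{X^{s,-(b-\frac{|s|}{5})}}\le c\,\norm{u}_{X^{s,b}\cap D^{\alpha}}\norm{v}_{X^{s,b}\cap D^{\alpha}},
\]
in which the inputs keep the modulation exponent $b$ while the output modulation carries the smaller exponent $b-\tfrac{|s|}{5}$. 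The new upper restriction $s<\tfrac52$ enters exactly here: it is what allows one to keep $b<\tfrac12$ while still having $b-\tfrac{|s|}{5}>0$, so that the output modulation remains summable (for $s\le0$ this is automatic, since $|s|<\tfrac74$).

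Finally I would establish this reduced estimate by repeating the dyadic case analysis of Proposition \ref{prop:bi1} almost verbatim, the only modification being that the output modulation factor $2^{-bj_3}$ is replaced by $2^{-(b-\frac{|s|}{5})j_3}$, i.e.\ an extra factor $2^{\frac{|s|}{5}j_3}$ has to be absorbed. This factor is harmless whenever $j_3$ is small, so the entire difficulty is concentrated in the configurations with $j_3=j_{max}$. The main obstacle is therefore the \emph{high $\times$ high $\Rightarrow$ low} interaction (Case III), where \eqref{eq:support property} and \eqref{eq:resonant function} force $j_{max}\gtrsim4k_{max}+k_{min}$, so that the loss is as large as $2^{\frac{|s|}{5}(4k_{max}+k_{min})}$; I expect that verifying that this loss is still beaten by the dispersive gain of Lemma \ref{lem:block estimate}(b) together with the frequency weight $2^{-4sk_{max}}$ and the modulation summation will be the delicate computation, and it is precisely what confines the range to $s>-\tfrac74$. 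For the low-output-frequency pieces ($|\xi|\lesssim1$, including the further dyadic splitting used in Case III-a) I would use the sharper relation $\bra\tau\sim\bra{\tau-\xi^5}$ rather than the lossy reduction above, and the \emph{low $\times$ low $\Rightarrow$ low} contribution is controlled directly by Cauchy--Schwarz using the $D^{\alpha}$ norm with $\alpha>\tfrac12$. Choosing $b=b(s)<\tfrac12$ with $b>\tfrac{|s|}{5}$ and collecting the cases then yields \eqref{eq:bilinear2}.
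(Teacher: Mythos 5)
Your pointwise comparison $\bra{\tau}^{s/5}\bra{\tau-\xi^5}^{-b}\lesssim\bra{\xi}^{s}\bra{\tau-\xi^5}^{\frac{|s|}{5}-b}$ is correct, but the reduction it produces cannot be closed in the range that matters, and the step you defer (``I expect that verifying that this loss is still beaten\dots'') is exactly where the argument breaks. Carry out the dyadic computation in the \emph{high $\times$ high $\Rightarrow$ low} case with output frequency $2^{k_3}$, $1\le k_3\le k-5$ (the paper's Case III-b; note that your low-output-frequency fallback $\bra{\tau}\sim\bra{\tau-\xi^5}$ only covers the piece $|\xi|\lesssim 1$, i.e.\ Case III-a). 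There $j_{max}\ge 4k+k_3-5$, and your reduction inserts the factor $2^{\frac{2|s|}{5}j_3}$ into the squared sums of \eqref{eq:bilinear1-8}. In the sub-case $j_3=j_{max}$, the only choice in Lemma \ref{lem:block estimate} (b) that keeps the $j_3$-sum convergent with $b<\frac12$ is $(k_i,j_i)=(k_3,j_3)$ (the choices $i=1,2$ force $b>\frac12+\frac{|s|}{10}$, and Lemma \ref{lem:block estimate} (c) forces $b>\frac{7|s|}{10}$); summing over $j_1\le j_2\le j_3$ and then over $j_3\ge 4k+k_3-5$ produces $2^{(2-6b+\frac{2|s|}{5})(4k+k_3)}$, so that the exponent of $2^{k}$ becomes, for $s<0$, $5+\tfrac{28|s|}{5}-24b$. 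Making this negative requires $b>\frac{25+28|s|}{120}$, which reaches $\frac12$ already at $|s|=\frac54$. Hence with $b<\frac12$ your scheme fails for all $-\frac74<s\le-\frac54$, precisely the low-regularity end that the proposition is about.

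The failure is structural rather than a matter of bookkeeping: your proxy bound is lossy exactly in the configurations that dominate. When $j_3=j_{max}\sim 4k+k_3\gg 5k_3$ one has $|\tau|\sim|\tau-\xi^5|$, so the true weight $\bra{\tau}^{s/5}\sim 2^{\frac{s}{5}j_3}$ is, for $s<0$, a large \emph{gain}; replacing it by $\bra{\xi}^s\bra{\tau-\xi^5}^{|s|/5}$ converts that gain into the loss $2^{\frac{2|s|}{5}j_3}$ per squared sum. The paper keeps this correlation by splitting the $(\tau,\xi)$ region \emph{before} any weight comparison: for $s\le 0$, on $|\tau|>\frac1{32}|\xi|^5$ one has $\bra{\tau}^{s/5}\lesssim\bra{\xi}^s$ and Proposition \ref{prop:bi1} applies verbatim with no loss, while on $|\tau|\le\frac1{32}|\xi|^5$ the output modulation is pinned, $|\tau-\xi^5|\sim|\xi|^5$, so the full weight $\bra{\tau-\xi^5}^{-b}$ is converted into a frequency gain $\bra{\xi}^{-5b}$ (about $\frac52$ derivatives), and the estimate is then closed by Cauchy--Schwarz together with the calculus estimates \eqref{eq:integral1}--\eqref{eq:integral2} and changes of variables $\mu=\tau-\xi^5+H$, not by dyadic block sums; the range $0<s<\frac52$ is treated symmetrically, reducing to $|\tau|\ge 2|\xi|^5$ where $\bra{\tau-\xi^5}\sim\bra{\tau}$. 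To repair your argument you would need the same dichotomy before estimating the weight, at which point you are essentially reproducing the paper's proof.
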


The proof is similar as the proof of Lemma 5.1 (b) in \cite{Holmerkdv}, while we give a direct proof without the interpolation argument. We state the elementary integral estimates without proof.
\begin{lemma}[Lemmas 5.12, 5.13 \cite{Holmerkdv}]
Let $\alpha, \beta \in \R$. 

(a) If $\frac14 < b < \frac12$, then
\begin{equation}\label{eq:integral1}
\int_{-\infty}^{\infty} \frac{dx}{\bra{x-\alpha}^{2b}\bra{x-\beta}^{2b}} \le \frac{c}{\bra{\alpha-\beta}^{4b-1}}.
\end{equation}

(b) If $b < \frac12$, then
\begin{equation}\label{eq:integral2}
\int_{|x|\le\beta} \frac{dx}{\bra{x}^{4b-1}|\alpha-x|^{1/2}} \le \frac{c(1+\beta)^{2-4b}}{\bra{\alpha}^{1/2}}.
\end{equation}
\end{lemma}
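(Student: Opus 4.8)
The plan is to treat both estimates as elementary one-variable integral bounds, proved by decomposing the domain according to the relative positions of the two concentration/singularity points and tracking which of the exponent constraints is invoked on each piece. For part (a) I would first normalize by the translation $x \mapsto x + \beta$, reducing matters to bounding $\int_{\R}\langle x - d\rangle^{-2b}\langle x\rangle^{-2b}\,dx$ with $d = |\alpha-\beta| \geq 0$; when $d \lesssim 1$ the claim is trivial since $\langle\alpha-\beta\rangle^{4b-1}\sim 1$ and the integral converges uniformly (here $4b > 1$). For $d \gg 1$ I would split $\R$ into the two far tails $(-\infty,-d]$ and $[2d,\infty)$ and the bounded middle $[-d,2d]$. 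On the tails both brackets are comparable to $\langle x\rangle$, so the integrand is $\lesssim \langle x\rangle^{-4b}$ and its integral is $\lesssim \langle d\rangle^{1-4b}$, which is precisely where $b > 1/4$ is needed for convergence with the correct power. Cutting the middle at $x = d/2$, on $[-d,d/2]$ one has $\langle x-d\rangle\gtrsim\langle d\rangle$ so that factor yields $\langle d\rangle^{-2b}$ while $\int_{|x|\leq d}\langle x\rangle^{-2b}\,dx \lesssim \langle d\rangle^{1-2b}$ (using $b < 1/2$), and $[d/2,2d]$ is the mirror image with the two brackets interchanged. Every piece is $\lesssim\langle d\rangle^{1-4b} = \langle\alpha-\beta\rangle^{-(4b-1)}$, and one sees that the two-sided range $1/4 < b < 1/2$ is exactly what the tails and the bulk jointly demand.

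For part (b), set $\gamma := 4b - 1 < 1$ (possibly negative) and split $\{|x|\leq\beta\}$ by the dichotomy $|x-\alpha| > \langle\alpha\rangle/2$ versus $|x-\alpha| \leq \langle\alpha\rangle/2$. On the first region $|\alpha - x|^{-1/2}\lesssim\langle\alpha\rangle^{-1/2}$, and extracting this factor leaves $\langle\alpha\rangle^{-1/2}\int_{|x|\leq\beta}\langle x\rangle^{-\gamma}\,dx \lesssim \langle\alpha\rangle^{-1/2}(1+\beta)^{1-\gamma}$, which is the claimed bound since $1-\gamma = 2-4b$ and $\gamma < 1$ forces the last integral to grow at most like $(1+\beta)^{1-\gamma}$, whatever the sign of $\gamma$. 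On the second region $\langle x\rangle\sim\langle\alpha\rangle$, so $\langle x\rangle^{-\gamma}\sim\langle\alpha\rangle^{-\gamma}$ factors out and the locally integrable singularity contributes $\int_{|x-\alpha|\leq\langle\alpha\rangle/2}|x-\alpha|^{-1/2}\,dx \lesssim \langle\alpha\rangle^{1/2}$, for a total of $\langle\alpha\rangle^{1/2-\gamma}$.

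The decisive observation for part (b) is that this near-singularity region can only be nonempty when the singularity sits inside (or just beside) the integration interval, i.e.\ when $|\alpha|\lesssim\beta$ and hence $\langle\alpha\rangle\lesssim 1+\beta$. Invoking $\langle\alpha\rangle\lesssim 1+\beta$ together with $1-\gamma > 0$, I would rewrite $\langle\alpha\rangle^{1/2-\gamma} = \langle\alpha\rangle^{1-\gamma}\langle\alpha\rangle^{-1/2}\lesssim (1+\beta)^{1-\gamma}\langle\alpha\rangle^{-1/2}$, again matching the target. I expect this bookkeeping to be the only genuinely delicate point: one must simultaneously produce the decay $\langle\alpha\rangle^{-1/2}$ and the growth $(1+\beta)^{2-4b}$, and recognize that the geometric constraint $|x|\leq\beta$ silently tames the near-singularity contribution. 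Everything else reduces to the two routine one-dimensional facts that $\int_{|x|\leq R}\langle x\rangle^{-\gamma}\,dx \lesssim \langle R\rangle^{1-\gamma}$ for $\gamma < 1$ and that $|x|^{-1/2}$ is locally integrable.
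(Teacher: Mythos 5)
Your proposal is correct, and in fact there is nothing in the paper to compare it against: the authors state this lemma explicitly \emph{without proof}, citing Lemmas 5.12 and 5.13 of Holmer's KdV paper \cite{Holmerkdv}. Your argument supplies exactly the kind of elementary case analysis that is standard for these estimates. In part (a), the reduction to $d=|\alpha-\beta|$, the trivial case $d\lesssim 1$, and the four-region splitting (two tails giving $\langle x\rangle^{-4b}$ decay, which is where $b>\tfrac14$ enters, and the two halves of the bulk each giving $\langle d\rangle^{-2b}\cdot\langle d\rangle^{1-2b}$, which is where $b<\tfrac12$ enters) are all sound and yield the claimed $\langle\alpha-\beta\rangle^{1-4b}$ on every piece. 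In part (b), the dichotomy $|x-\alpha|\gtrless\langle\alpha\rangle/2$ is handled correctly: on the far region the singular factor is bounded by $\langle\alpha\rangle^{-1/2}$ and the remaining integral by $(1+\beta)^{2-4b}$ (valid for either sign of $4b-1$), while on the near region the two-sided comparability $\langle x\rangle\sim\langle\alpha\rangle$ and local integrability of $|x-\alpha|^{-1/2}$ give $\langle\alpha\rangle^{1/2-(4b-1)}$. Your ``decisive observation'' --- that the near region meets $\{|x|\le\beta\}$ only when $\langle\alpha\rangle\lesssim 1+\beta$, so that the surplus power $\langle\alpha\rangle^{1-(4b-1)}$ can be traded for $(1+\beta)^{2-4b}$ using $b<\tfrac12$ --- is precisely the point that makes the stated bound (with decay in $\alpha$ \emph{and} growth in $\beta$) come out, and it is argued correctly. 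The only cosmetic remarks: the constants you produce depend on $b$ (as they must, since e.g.\ $C^{2-4b}$ appears), which is consistent with the lemma's unspecified $c$, and the degenerate case $\beta\le 0$ in (b) is trivially empty; neither affects validity.
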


\begin{proof}[Proof of Proposition \ref{prop:bi2}]
	We first address the range $-7/4 < s\le0$. If $|\tau| > \frac{1}{32}|\xi|^5$, it follows \eqref{eq:bilinear1} in the proof of Proposition \ref{prop:bi1} due to $\bra{\tau}^{\frac{s}{5}} \lesssim \bra{\xi}^s$. Thus, we may assume that $|\tau| \le \frac{1}{32}|\xi|^5$. We note that 
	\[\frac{31}{32}|\xi|^5 \le \frac{31}{32}|\xi|^5 - (|\tau| - \frac{1}{32}|\xi|^5) = |\xi|^5 - |\tau| \le |\tau-\xi^5| \le |\tau|+ |\xi|^5 \le \frac{33}{32}|\xi|^5,\]
	so that we have 
	\begin{equation}\label{eq:same size}
	|\tau-\xi^5| \sim |\xi|^5
	\end{equation} 
	under this assumption. Similarly, we also have
	\begin{equation}\label{eq:same size1}
	|\tau-\frac{1}{16}\xi^5| \sim |\xi|^5.
	\end{equation} 
	Then, by defining $f_i$ as in \eqref{eq:restriction1} under \eqref{eq:restriction2} and \eqref{eq:weight}, \eqref{eq:bilinear2} is equivalent to
	\begin{equation}\label{eq:bilinear2-1}
	\iint\limits_{\substack{\xi_1+\xi_2=\xi \\ \tau_1+\tau_2=\tau}} \frac{|\xi|\bra{\tau}^{\frac{s}{5}}\wt{f}_1(\tau_1,\xi_1)\wt{f}_2(\tau_2,\xi_2)\wt{f}_3(\tau,\xi)}{\bra{\xi_1}^s\bra{\xi_2}^s\bra{\xi}^{5b}\beta_1(\tau_1,\xi_1)\beta_2(\tau_2,\xi_2)} \lesssim \norm{f_1}_{L^2}\norm{f_2}_{L^2}\norm{f_3}_{L^2}.
	\end{equation}
	We may assume that $|\xi_1| \le |\xi_2|$ thanks to the symmetry.
	
	\textbf{Case I} $|\xi_2|< 1$. In this case, we know $|\xi| < 1$ due to the identity \eqref{eq:symmetry1}. Then, the left-hand side of \eqref{eq:bilinear2-1} is equivalent to
	\[\iint\limits_{\substack{\xi_1+\xi_2=\xi \\ \tau_1+\tau_2=\tau \\ |\xi_1|,|\xi_2|,|\xi| < 1}} \frac{\wt{f}_1(\tau_1,\xi_1)\wt{f}_2(\tau_2,\xi_2)\wt{f}_3(\tau,\xi)}{\bra{\tau_1}^{\alpha}\bra{\tau_2}^{\alpha}}\]
	due to $|\tau| \lesssim 1$. Since $\alpha > 1/2$, we have \eqref{eq:bilinear2-1} by the Cauchy-Schwarz inequality.
	
	\textbf{Case II} $|\xi_2| \ge 1$. 
	
	\textbf{Case II-1} $|\xi_1| < 1$. In this case, we can know that $|\xi| \ge 1$ due to the identity \eqref{eq:symmetry1}. Then, the left-hand side of \eqref{eq:bilinear2-1} is bounded by
	\begin{equation}\label{eq:bilinear2-2}
	\iint\limits_{\ast} \frac{|\xi|\wt{f}_1(\tau_1,\xi_1)\wt{f}_2(\tau_2,\xi_2)\wt{f}_3(\tau,\xi)}{\bra{\xi_2}^s\bra{\xi}^{5b}\bra{\tau_1}^{\alpha}}
	\end{equation}
	due to $\bra{\tau}^{s/5}\bra{\tau_2-\xi_2^5}^{-b} \lesssim 1$, where
	\[\ast = \{(\tau_1,\tau_2,\tau,\xi_1,\xi_2,\xi) \in \R^6: \xi_1+\xi_2=\xi,\;  \tau_1+\tau_2=\tau,\;  |\xi_1| < 1,\; |\xi_2|,|\xi| \ge 1\}.\]
	For given $-3/2 < s \le 0$, we can choose $b = b(s)$ satisfying $\frac{1-s}{5} < b < \frac12$. Since $|\xi_2| \sim |\xi|$, we have $|\xi|^{1-s-5b} \le 1$, and hence we have from the Cauchy-Schwarz inequality that
	\[\eqref{eq:bilinear2-2} \lesssim \norm{f_2}_{L^2}\norm{f_3}_{L^2} \iint\limits_{\set{|\xi_1| < 1} \times \R} \frac{\wt{f}_1(\tau_1,\xi_1)}{\bra{\tau_1}^{\alpha}} \; d\xi_1 \; d\tau_1\lesssim \norm{f_1}_{L^2}\norm{f_2}_{L^2}\norm{f_3}_{L^2}.\]
	The last inequality holds true due to $\alpha > 1/2$.
	
	\textbf{Case II-2} $1 \le |\xi_1| \le |\xi_2|$. We may further assume that $|\tau_1 - \xi_1^2| \le |\tau_2 - \xi_2^5|$ due to the symmetry. 
	
	\textbf{Case II-2.a} $|\tau_2 - \xi_2^5| \le 100000 |\tau -\xi^5|$. In this case, it suffices to show from the Cauchy-Schwarz inequality that
	\begin{equation}\label{eq:bilinear2-3}
	\sup_{\substack{\xi,\tau \in \R\\ |\tau| \le \frac{1}{32}|\xi|^5}}\Big(\iint\limits_{\substack{\xi_1+\xi_2=\xi \\ \tau_1+\tau_2=\tau}} \frac{|\xi|^2\bra{\tau}^{\frac{2s}{5}}}{\bra{\xi_1}^{2s}\bra{\xi_2}^{2s}\bra{\xi}^{10b}\bra{\tau_1-\xi_1^5}^{2b}\bra{\tau_2-\xi_2^5}^{2b}} \; d\xi_1\;d\tau_1\Big)^{1/2} \le c.
	\end{equation}
	Under the assumption, we only consider the case when $|\xi| \ge 1$. Otherwise, from \eqref{eq:same size}, $|\tau_1-\xi_1^5| \lesssim 1$, and hence we have \eqref{eq:bilinear2-1} similarly as \textbf{Case II} for $-2 \le s \le 0$. Indeed, from the identity \eqref{eq:symmetry2} under this condition, we know $|H| \lesssim 1$. Since 
	\[|H| = \frac52|\xi_1||\xi_2||\xi|(\xi_1^2+\xi_2^2+\xi^2) \ge 5|\xi_1|^2|\xi_2|^2|\xi|,\]
	we have $|\xi_1|^{-s}|\xi_2|^{-s} \lesssim |\xi|^{\frac{s}{2}}$, and hence $|\xi|^{1+s/2} \le 1$ for $-2 \le s \le 0$. The Cauchy-Schwarz inequality with respect to $\xi_1, \xi, \tau_1, \tau$ guarantees \eqref{eq:bilinear2-1}.
	
	We now consider \eqref{eq:bilinear2-3} on the case when $|\xi| \ge 1$. We use \eqref{eq:integral1} in addition to \eqref{eq:symmetry2} so that the left-hand side of \eqref{eq:bilinear2-3} is bounded by
	\begin{equation}\label{eq:bilinear2-4}
	\frac{|\xi|\bra{\tau}^{\frac{s}{5}}}{\bra{\xi}^{5b}}\left(\int_{\R} \frac{d\xi_1}{\bra{\xi_1}^{2s}\bra{\xi_2}^{2s}\bra{\tau-\xi^5 + H}^{4b-1}}\right)^{1/2}.
	\end{equation}
	The support property ($|\tau-\xi^5| \gtrsim |H|$) with \eqref{eq:same size} enables us to get
	\[|\xi_1|^{-2s}|\xi_2|^{-2s} \lesssim |\xi|^{-4s},\]
	and hence \eqref{eq:bilinear2-4} can be controlled by
	\begin{equation}\label{eq:bilinear2-5}
	\frac{|\xi|^{1-2s}\bra{\tau}^{\frac{s}{5}}}{\bra{\xi}^{5b}}\left(\int_{\R} \frac{d\xi_1}{\bra{\tau-\xi^5 + H}^{4b-1}}\right)^{1/2}.
	\end{equation}
	Let 
	\[\mu = \tau - \xi^5 + H.\]
	Note that $|\mu| \le 2|\tau-\xi^5|$ in this case. Then, by the direct calculation, we know
	\[\mu - (\tau - \frac{1}{16}\xi^5) = -\frac{5}{16}\xi(\xi-2\xi_1)^2(2\xi^2 + (\xi-2\xi_1)^2)\]
	and
	\[d\mu = \frac52\xi(\xi^2 + (\xi-2\xi)^2)(\xi-2\xi_1) \; d\xi_1.\]
	Since
	\[|\xi|^{\frac32}|\mu-(\tau-\frac{1}{16}\xi^5)|^{\frac12} \le |\xi||\xi-2\xi_1||2\xi^2 +(\xi-2\xi_1)^2| \le 2|\xi||\xi-2\xi_1||\xi^2 +(\xi-2\xi_1)^2|,\]
	we can reduce \eqref{eq:bilinear2-5} by
	\begin{equation}\label{eq:bilinear2-6}
	\frac{|\xi|^{1-2s}\bra{\tau}^{\frac{s}{5}}}{\bra{\xi}^{5b}|\xi|^{\frac34}}\left(\int_{|\mu| \lesssim |\tau - \xi^5|} \frac{d\mu}{\bra{\mu}^{4b-1}|\mu - (\tau-\frac{1}{16}\xi^5)|^{1/2}}\right)^{1/2}.
	\end{equation}
	By \eqref{eq:integral2}, \eqref{eq:bilinear2-6} is bounded by
	\[\frac{|\xi|^{1-2s}\bra{\tau}^{\frac{s}{5}}\bra{\tau-\xi^5}^{1-2b}}{\bra{\xi}^{5b}|\xi|^{\frac34}\bra{\tau-\frac{1}{16}\xi^5}^{1/4}}.\]
	For given $-7/4 < s \le 0$, we choose $b = b(s)$ satisfying $\frac{4-2s}{15} \le b < \frac12$. From \eqref{eq:same size} and \eqref{eq:same size1} with $|\xi| \ge 1$ and $s \le 0$, we obtain
	\[\frac{|\xi|^{1-2s}\bra{\tau}^{\frac{s}{5}}\bra{\tau-\xi^5}^{1-2b}}{\bra{\xi}^{5b}|\xi|^{\frac34}\bra{\tau-\frac{1}{16}\xi^5}^{1/4}} \lesssim |\xi|^{4-2s-15b} \lesssim 1.\]
	
	\textbf{Case II-2.b} $|\tau - \xi^5| \le \frac{1}{100000} |\tau_2 -\xi_2^5|$. In this case, it suffices to show from the Cauchy-Schwarz inequality that
	\begin{equation}\label{eq:bilinear2-7}
	\sup_{\xi_2,\tau_2 \in \R}\Big(\iint\limits_{\substack{\xi_1+\xi_2=\xi \\ \tau_1+\tau_2=\tau}} \frac{|\xi|^2\bra{\tau}^{\frac{2s}{5}}}{\bra{\xi_1}^{2s}\bra{\xi_2}^{2s}\bra{\xi}^{10b}\bra{\tau_1-\xi_1^5}^{2b}\bra{\tau_2-\xi_2^5}^{2b}} \; d\xi\;d\tau\Big)^{1/2} \le c.
	\end{equation}
	In this case we fix $-2 < s \le 0$. Since $-5/2 <-2 < s$, we can choose $b = b(s)$ satisfying $-s/5 \le b < \frac12$. From the fact that
	\[\bra{\tau}^{\frac{2s}{5}+2b} \lesssim \bra{\xi^5}^{\frac{2s}{5}+2b} \sim \bra{\xi}^{2s+10b},\]
	the left-hand side of \eqref{eq:bilinear2-7} is bounded by
	\begin{equation}\label{eq:bilinear2-8}
	\frac{1}{\bra{\xi_2}^{s}\bra{\tau_2-\xi_2^5}^{b}}\Big(\iint\limits_{\substack{\xi_1+\xi_2=\xi \\ \tau_1+\tau_2=\tau}} \frac{|\xi|^2\bra{\xi}^{2s}}{\bra{\xi_1}^{2s}\bra{\tau}^{2b}\bra{\tau_1-\xi_1^5}^{2b}} \; d\xi\;d\tau\Big)^{1/2}.
	\end{equation}
	
	When $|H| \le \frac12|\tau_2-\xi_2^5|$, we can know the following facts:
	\[\begin{aligned}
	&\hspace{-6em}\diamond |\tau-\xi^5| \ll |\tau_2-\xi_2^5| \; \mbox{ and }\; |\tau| \le \frac{1}{32}|\xi|^5 \; \mbox{ imply } \; |\xi|^5 \ll |\tau_2 - \xi_2^5|.\\
	&\hspace{-6em}\diamond \bra{\tau_2 - \xi_2^5 - H + \xi^5} \sim \bra{\tau_2 - \xi_2^5}.\\
	&\hspace{-6em}\diamond \bra{\xi_1}^{-2s}\bra{\xi_2}^{-2s} \lesssim |\tau_2 - \xi_2^5|^{-s}|\xi|^s.
	\end{aligned}\]
We perform the integration in \eqref{eq:bilinear2-8} in terms of $\tau$ variable by using \eqref{eq:integral1}, then \eqref{eq:bilinear2-8} is bounded by
	\[\begin{aligned}
	&\frac{1}{\bra{\xi_2}^{s}\bra{\tau_2-\xi_2^5}^{b}}\left(\int_{\R} \frac{|\xi|^2\bra{\xi}^{2s}}{\bra{\xi_1}^{2s}\bra{\tau_2 - \xi_2^5 - H + \xi^5}^{4b-1}} \; d\xi\right)^{1/2}\\
	&\sim \frac{1}{\bra{\xi_2}^{s}\bra{\tau_2-\xi_2^5}^{3b-1/2}}\left(\int_{|\xi| \le |\tau_2 - \xi_2^5|^{1/5}} \frac{|\xi|^2\bra{\xi}^{2s}}{\bra{\xi_1}^{2s}} \; d\xi\right)^{1/2}\\
	&\lesssim \frac{|\tau_2-\xi_5^5|^{-s/2}}{\bra{\tau_2-\xi_2^5}^{3b-1/2}}\left(\int_{|\xi| \le |\tau_2 - \xi_2^5|^{1/5}} |\xi|^{2+s}\bra{\xi}^{2s} \; d\xi\right)^{1/2}.
	\end{aligned}\]
	For given $-7/2 <-2< s \le 0$, we choose can $b = b(s)$ satisfying $\frac{4-s}{15} < b < \frac12$\footnote{The strict inequality $\frac{4-s}{15} < b$ covers the logarithmic divergence when $s = -1$.}. Then, by performing integration in terms of $\xi$, we have
	\[\frac{|\tau_2-\xi_5^5|^{-s/2}}{\bra{\tau_2-\xi_2^5}^{3b-1/2}}\left(\int_{|\xi| \le |\tau_2 - \xi_2^5|^{1/5}} |\xi|^{2+s}\bra{\xi}^{2s} \; d\xi\right)^{1/2} \lesssim \bra{\tau_2-\xi_2^5}^{\frac{1}{10}(8-30b-2s)}\lesssim 1.\]
	
	For the other case ($|H| > \frac12|\tau_2-\xi_2^5|$), we can know the following facts:
	\begin{equation}\label{eq:facts}
	\begin{aligned}
	&\hspace{-17em}\diamond 10|\xi| \le |\xi_1| \sim |\xi_2|.\\
	&\hspace{-17em}\diamond |\xi - \xi_2| \sim |\xi_2|.\\
	&\hspace{-17em}\diamond |\xi| \sim \frac{|\tau_2 - \xi_2^5|}{|\xi_2|^4}.\\
	&\hspace{-17em}\diamond |\xi|^5 \ll |\tau_2 - \xi_2^5|.\\
	&\hspace{-17em}\diamond \bra{\xi_1}^{-2s}\bra{\xi_2}^{-2s} \lesssim |\tau_2 - \xi_2^5|^{-s}|\xi|^s.
	\end{aligned}
	\end{equation}
	To verify the first one in \eqref{eq:facts}\footnote{It is not difficult to verify the others.}, suppose that $|\xi_1|\le 10|\xi|$. From \eqref{eq:symmetry1}, we know $|\xi_2| \le 11|\xi|$. Then,
	\[\begin{aligned}
	|H| &= \frac{5}{2}|\xi_1||\xi_2||\xi|(|\xi_1|^2 + |\xi_2|^2 + |\xi|^2) \\
	&\le 30525|\xi|^5 \le \frac{976800}{31}|\tau-\xi^5| \le \frac13|\tau_2 - \xi_2^5|,
	\end{aligned}\]
	which contradicts to the assumption $|H| > \frac12|\tau_2-\xi_2^5|$.
	
	Now, under the conditions \eqref{eq:facts}, we control the following integral:
	\begin{equation}\label{eq:bilinear2-10}
	\iint\limits_{\substack{\xi_1+\xi_2=\xi \\ \tau_1+\tau_2=\tau}} \frac{|\xi|^2\bra{\xi}^{2s}\bra{\xi_1}^{-2s}\bra{\xi_2}^{-2s}}{\bra{\tau_2-\xi_2^5}^{2b}\bra{\tau}^{2b}\bra{\tau_1-\xi_1^5}^{2b}} \; d\xi\;d\tau.
	\end{equation}
	
	When $|\xi| \le 1$, \eqref{eq:facts} and \eqref{eq:integral1} yield
	\[\begin{aligned}
	\eqref{eq:bilinear2-10} &\lesssim \iint\limits_{\substack{\xi_1+\xi_2=\xi \\ \tau_1+\tau_2=\tau}} \frac{|\xi|^{2+s}\bra{\xi}^{2s}\bra{\tau_2-\xi_2^5}^{-s-2b}}{\bra{\tau}^{2b}\bra{\tau_1-\xi_1^5}^{2b}} \; d\tau\;d\xi\\
	&\lesssim \int_{|\xi |\le 1} \frac{|\xi|^{2+s}\bra{\xi}^{2s}\bra{\tau_2-\xi_2^5}^{-s-2b}}{\bra{\tau_2-\xi_2^5 - H + \xi^5}^{4b-1}} \; d\xi
	\end{aligned}\]
	Let 
	\[\mu = \tau_2 - \xi_2^5 - H + \xi^5.\]
	A direct calculation gives
	\[d\mu = 5(\xi-\xi_2)^4 \; d\xi.\]
	From the facts \eqref{eq:facts} with $|\xi| \le 1$, since $|\xi_2|^{-4} \lesssim |\tau_2 - \xi_2^5|^{-1}$, the change of variable enables us to get
	\[\eqref{eq:bilinear2-10} \lesssim \int_{|\mu| \le |\tau_2 - \xi_2^5|} \frac{\bra{\tau_2-\xi_2^5}^{-s-2b-1}}{\bra{\mu}^{4b-1}} \; d\mu\]
	for $-2 < s \le 0$. For given $-2 < s \le 0$, we can choose $b=b(s)$ satisfying $\frac{1-s}{6} \le b < \frac12$. Then, by performing the integration in terms of $\mu$, we have
	\[\eqref{eq:bilinear2-10} \lesssim \bra{\tau_2-\xi_2^5}^{-s-6b+1} \lesssim 1.\]
	
	Now, we focus on the case when $|\xi| > 1$. Similarly as before, \eqref{eq:bilinear2-10} can be reduced by 
	\begin{equation}\label{eq:bilinear2-11}
	\int_{|\xi| > 1} \frac{|\xi|^{2+3s}\bra{\tau_2-\xi_2^5}^{-s-2b}}{\bra{\tau_2-\xi_2^5 - H + \xi^5}^{4b-1}} \; d\xi.
	\end{equation}
		
	We use the change of variable $\mu = \tau_2 - \xi_2^5 - H + \xi^5$ with
	\[d\mu = 5(\xi-\xi_2)^4 \; d\xi.\]
	If $-2 < s \le -1$, since 
	\[|\xi_2|^{-4} \sim |\xi||\tau_2-\xi_2^5|^{-1} \hspace{1em}(\Rightarrow |\xi|^{3+3s} \lesssim 1),\]
	we have 
	\[\eqref{eq:bilinear2-11} \lesssim \int_{|\mu| < |\tau_2-\xi_2^5|} \frac{\bra{\tau_2-\xi_2^5}^{-s-2b-1}}{\bra{\mu}^{4b-1}} \; d\mu \lesssim\bra{\tau_2-\xi_2^5}^{-s-6b+1} \lesssim 1\]
	by choosing $b = b(s)$ satisfying $(1-s)/6 < b < 1/2$.
	
	Otherwise ($-1 < s \le 0$), we can choose $b=b(s)$ satisfying $\frac{4-s}{15} \le b < \frac12$. Then, from the fact $|\xi| \ll |\tau_2 - \xi_2^5|^{1/5}$, we obtain 
	\[\eqref{eq:bilinear2-11} \lesssim \int_{|\mu| \le |\tau_2 - \xi_2^5|} \frac{\bra{\tau_2-\xi_2^5}^{\frac{3+3s}{5}-s-2b-1}}{\bra{\mu}^{4b-1}} \; d\mu \lesssim \bra{\tau_2 - \xi_2^5}^{\frac{8-2s-30b}{5}} \lesssim 1.\]
	
	Therefore, we complete the proof of \eqref{eq:bilinear2} for $-7/4 < s \le 0$.
	
	Now we address the range $0 < s < 5/2$. For the positive regularity, it is enough to consider $|\tau| \ge 2|\xi|^5$, otherwise, it follows \eqref{eq:bilinear1} in the proof of Proposition \ref{prop:bi1}. Note that $|\tau| \ge 2|\xi|^5$ enables us to assume  that $\bra{\tau-\xi^5} \sim \bra{\tau}$. We further assume that $|\xi_1| \le |\xi_2|$ by the symmetry. Then, similarly as before (for the negative regularity range) it suffices to show
	\begin{equation}\label{eq:bilinear2.1-1}
	\iint\limits_{\substack{\xi_1+\xi_2=\xi \\ \tau_1+\tau_2=\tau}} \frac{|\xi|\bra{\tau}^{\frac{s}{5}}\wt{f}_1(\tau_1,\xi_1)\wt{f}_2(\tau_2,\xi_2)\wt{f}_3(\tau,\xi)}{\bra{\xi_1}^s\bra{\xi_2}^s\bra{\tau}^{b}\beta_1(\tau_1,\xi_1)\beta_2(\tau_2,\xi_2)} \lesssim \norm{f_1}_{L^2}\norm{f_2}_{L^2}\norm{f_3}_{L^2},
	\end{equation}
	where $f_i$ and $\beta_i$, $i=1,2$ are defined as in \eqref{eq:restriction1} and \eqref{eq:restriction2}, respectively.
	
	\textbf{Case I} $|\xi_2| \le 1$. In this case, the left-hand side of \eqref{eq:bilinear2.1-1} can be reduced by
	\begin{equation}\label{eq:bilinear2.1-2}
	\iint\limits_{\substack{\xi_1+\xi_2=\xi \\ \tau_1+\tau_2=\tau\\|\xi_1|,|\xi_2|,|\xi| \le 1}} \frac{\bra{\tau}^{\frac{s}{5}-b}\wt{f}_1(\tau_1,\xi_1)\wt{f}_2(\tau_2,\xi_2)\wt{f}_3(\tau,\xi)}{\bra{\tau_1}^{\alpha}\bra{\tau_2}^{\alpha}}.
	\end{equation}
	For $0 < s < 5/2$, there exists $b=b(s)$ satisfying $s/5 < b < 1/2$ such that
	\[\eqref{eq:bilinear2.1-2} \lesssim \iint\limits_{\substack{\xi_1+\xi_2=\xi \\ \tau_1+\tau_2=\tau\\|\xi_1|,|\xi_2|,|\xi| \le 1}} \frac{\wt{f}_1(\tau_1,\xi_1)\wt{f}_2(\tau_2,\xi_2)\wt{f}_3(\tau,\xi)}{\bra{\tau_1}^{\alpha}\bra{\tau_2}^{\alpha}} \lesssim \norm{f_1}_{L^2}\norm{f_2}_{L^2}\norm{f_3}_{L^2},\]
	by the Cauchy-Schwarz inequality, due to $\alpha > 1/2$.
	
	\textbf{Case II} $|\xi_2| > 1$.
	
	\textbf{Case II-a} $|\xi_1| \le 1$. In this case, we know $|\xi|>1$, due to the identity \eqref{eq:symmetry1}. Then, the left-hand side of \eqref{eq:bilinear2.1-1} is reduced to
	\begin{equation}\label{eq:bilinear2.1-3}
	\iint\limits_{\ast} \frac{\bra{\xi}^{1-s}\bra{\tau}^{\frac{s}{5}-b}\wt{f}_1(\tau_1,\xi_1)\wt{f}_2(\tau_2,\xi_2)\wt{f}_3(\tau,\xi)}{\bra{\tau_1}^{\alpha}},
	\end{equation}
	where
	\[\ast = \{(\tau_1,\tau_2,\tau,\xi_1,\xi_2,\xi) \in \R^6: \xi_1+\xi_2=\xi \; \tau_1+\tau_2=\tau\;|\xi_1| \le 1 \;|\xi_2|\sim|\xi| > 1\}.\]
	If $1-s >0$, we can choose $1/5 < b <1/2$ such that
	\begin{equation}\label{eq:bilinear2.1-4}
	\eqref{eq:bilinear2.1-3} \lesssim  \norm{f_1}_{L^2}\norm{f_2}_{L^2}\norm{f_3}_{L^2}
	\end{equation}
	holds true by the Cauchy-Schwarz inequality, due to $\alpha > 1/2$. Otherwise ($1 \le s <5/2$), there exists $b=b(s)$ satisfying $s/5 < b < 1/2$ such that \eqref{eq:bilinear2.1-4} holds true.
	
	\textbf{Case II-b} $|\xi_1| > 1$. By the Cauchy-Schwarz inequality, it suffices to show
	\begin{equation}\label{eq:bilinear2.1-5}
	\sup_{\xi,\tau \in \R}\Big(\iint\limits_{\substack{\xi_1+\xi_2=\xi \\ \tau_1+\tau_2=\tau\\1<|\xi_1|\le |\xi_2|}} \frac{|\xi|^2\bra{\tau}^{\frac{2s}{5}-2b}}{\bra{\xi_1}^{2s}\bra{\xi_2}^{2s}\bra{\tau_1-\xi_1^5}^{2b}\bra{\tau_2-\xi_2^5}^{2b}} \; d\xi_1\;d\tau_1\Big)^{1/2} \le c.
	\end{equation}
	By \eqref{eq:integral1}, the left-hand side of \eqref{eq:bilinear2.1-5} is bounded by
	\begin{equation}\label{eq:bilinear2.1-6}
	|\xi|\bra{\tau}^{\frac{s}{5}-b}\Big(\int\limits_{|\xi_1|>1} \frac{1}{\bra{\xi_1}^{2s}\bra{\xi_2}^{2s}\bra{\tau - \xi^5 + H}^{4b-1}} \; d\xi_1\Big)^{1/2}.
	\end{equation}
	
	If $1 < |\xi_1| \ll |\xi_2|$, we know that
	\[|\xi_2| \sim |\xi|, \hspace{1em} |H| \sim |\xi_1|\xi^4 \ll |\xi|^5 \le \frac12|\tau| \; \Rightarrow \; |\xi_1| \ll |\tau||\xi|^{-4}.\]
	For $0 < s < 5/2$, there exists $b = b(s)$ satisfying
	\[\frac13\left(\frac{s}{5} + 1\right) < b < \frac12\]
	such that
	\[\eqref{eq:bilinear2.1-6} \lesssim |\xi|^{-1-s}\bra{\tau}^{\frac{s}{5}-3b+1}\Big(\int\limits_{|\xi_1|>1} \frac{1}{|\xi_1|^{2s+1}} \; d\xi_1\Big)^{1/2} \le c\]
	holds true.
	
	Let us consider the other case ($1<|\xi_1| \sim |\xi_2|$). Note from \eqref{eq:symmetry1} that $|\xi| \lesssim |\xi_1| \sim |\xi_2|$. We further divide this case into the following three cases:
	\[\mbox{(a) } |H| \ll |\tau|, \hspace{1em} \mbox{(b) } |\tau| \ll |H|, \hspace{1em} \mbox{(c) } |H| \sim |\tau|.\]
	
	In the case of (a), we know $\bra{\tau - \xi^5 + H} \sim \bra{\tau}$. Then \eqref{eq:bilinear2.1-6} is reduced to
	\[|\xi|\bra{\tau}^{\frac{s}{5}-3b + \frac12}\Big(\int\limits_{|\xi_1|>1} \frac{1}{\bra{\xi_1}^{4s}} \; d\xi_1\Big)^{1/2}.\]
	Since $\bra{\xi} \lesssim \bra{\xi_1}$ and $|\xi|^3|\xi_1|^2 \lesssim |H| \ll |\tau|$, we have
	\begin{equation}\label{eq:bilinear2.1-8}
	|\xi|^{\frac14}\bra{\xi}^{-s}\bra{\tau}^{\frac{s}{5}-3b + \frac34}\Big(\int\limits_{|\xi_1|>1} \frac{1}{\bra{\xi_1}^{1+2s}} \; d\xi_1\Big)^{1/2}.
	\end{equation}
	If $|\xi| \le 1$, for $0 < s < 15/4$, there exists $b = b(s)$ satisfying 
	\[\frac13\left( \frac{s}{5} + \frac34 \right) < b < \frac12\]
	such that \eqref{eq:bilinear2.1-8} is bounded. Otherwise, for $0 < s < 15/4$, there exists $b = b(s)$ satisfying 
	\[\max\left[\frac13\left( \frac{s}{5} + \frac34 \right), \frac{4}{15} \right] < b < \frac12\]
	such that \eqref{eq:bilinear2.1-8} is bounded.
	
	In the case of (b), we know $\bra{\tau - \xi^5 + H} \sim \bra{H}$. It is enough to consider $|H| \gg 1$, otherwise, we have further smoothing effects from the low frequency derivative. Indeed, due to $|\xi||\xi_1|^4 \lesssim |H| \lesssim 1$, \eqref{eq:bilinear2.1-6} is bounded by
	\begin{equation}\label{eq:bilinear2.1-9}
	\bra{\tau}^{\frac{s}{5}-b}\Big(\int\limits_{|\xi_1|>1} \frac{1}{\bra{\xi_1}^{4s+8}} \; d\xi_1\Big)^{1/2}.
	\end{equation}
	For $0 < s < 5/2$, there exists $b=b(s)$ satisfying $s/5 < b <1/2$ such that \eqref{eq:bilinear2.1-9} is bounded. Under the condition $|H| \gg 1$, note from $|\xi\xi_1^4| \lesssim |H|$ that
	\[\bra{\tau - \xi^5 + H}^{1-4b} \sim |H|^{1-4b} \lesssim |\xi_1|^{4-16b}|\xi|^{1-4b}\]
	for $1/4 < b < 1/2$. Then, \eqref{eq:bilinear2.1-6} is bounded by
	\begin{equation}\label{eq:bilinear2.1-10}
	\Big(\int\limits_{|\xi_1|>1} \frac{|\xi|^{3-4b}\bra{\tau}^{\frac{2s}{5}-2b}}{|\xi_1|^{4(s+4b-1)}} \; d\xi_1\Big)^{1/2}.
	\end{equation}
	Since $3-4b > 0$ implies $|\xi|^{3-4b} \lesssim |\xi_1|^{3-4b}$, for $0 < s < 5/2$, there exists $b=b(s)$ satisfying
	\[\max\left[\frac14,\frac{s}{5}, \frac{2-s}{5} \right] < b < \frac12\]
	such that 
	\[\eqref{eq:bilinear2.1-10} \lesssim \Big(\int\limits_{|\xi_1|>1} \frac{\bra{\tau}^{\frac{2s}{5}-2b}}{{|\xi_1|^{4(s+4b-1)+4b-3}}} \; d\xi_1\Big)^{1/2} \lesssim 1.\]
	
	In the case of (c), the term $\bra{\tau-\xi^5+H}^{4b-1}$ in the denominator of integrand in \eqref{eq:bilinear2.1-6} is always negligible when $b \ge 1/4$. We first consider the case when $1 \lesssim |\xi| \lesssim |\xi_1|$. Note that
	\[|\tau| \sim |H| \gtrsim |\xi||\xi_1|^4 \gtrsim1 \Rightarrow \bra{\tau} \sim |\tau|.\]
	From this, for $0 < s < 5/2$, there exists $b=b(s)$ satisfying 
	\[\max\left[\frac14, \frac{s}{5}, \frac{3-2s}{10}\right] < b <\frac12\]
	such that
	\[\begin{aligned}
	\eqref{eq:bilinear2.1-6} &\lesssim \Big(\int\limits_{|\xi_1|>1} \frac{|\xi|^2|\tau|^{\frac{2s}{5}-2b}}{|\xi_1|^{4s}} \; d\xi_1\Big)^{1/2} \lesssim \Big(\int\limits_{|\xi_1|>1} \frac{|\xi|^{2+\frac{2s}{5}-2b}|\xi_1|^{4(\frac{2s}{5}-2b)}}{|\xi_1|^{4s}} \; d\xi_1\Big)^{1/2}\\
	&\lesssim \Big(\int\limits_{|\xi_1|>1} \frac{1}{|\xi_1|^{2s+10b-2}} \; d\xi_1\Big)^{1/2} \lesssim 1,
	\end{aligned}\]
	since $2s/5 -2b +2 > 0$ and $2s+10b-2 > 1$.
	
	Now we finish the proof with considering the case when $|\xi| \le 1$. We further divide this case into two cases: $|\xi| \lesssim |\xi_1|^{-1/2}$ and $|\xi_1|^{-1/2} \ll |\xi| \le 1$. 
	
	In the case when $|\xi| \lesssim |\xi_1|^{-1/2}$, for $0 < s < 5/2$, there exists $b=b(s)$ satisfying $\max(1/4 , s/5) < b < 1/2$ such that
	\[\eqref{eq:bilinear2.1-6} \lesssim \Big(\int\limits_{|\xi_1|>1} \frac{|\xi_1|^{-1}}{|\xi_1|^{4s}} \; d\xi_1\Big)^{1/2} \lesssim 1.\]
	Otherwise ($|\xi_1|^{-1/2} \ll |\xi| \le 1$), note that
	\[|\xi_1|^{\frac72} \ll |\xi||\xi_1|^4 \lesssim |H| \sim |\tau|.\]
	Hence, for $0 < s < 5/2$, there exists $b=b(s)$ satisfying 
	\[\max\left[\frac14 , \frac{s}{5}\right] < b < 1/2\]
	such that
	\[\eqref{eq:bilinear2.1-6} \lesssim \Big(\int\limits_{|\xi_1|>1} \frac{|\xi_1|^{\frac72(\frac{2s}{5}-2b)}}{|\xi_1|^{4s}} \; d\xi_1\Big)^{1/2} \lesssim 1.\]
	The last inequality holds due to $\frac17(1-\frac{13s}{5}) < 1/4$ ($\leftrightarrow 13s/5 + 7b > 0$) for $s>0$.
\end{proof}

\section{Proof of Theorem \ref{theorem1}}\label{sec:main proof 1}

Here we shall prove Theorems \ref{theorem1}. We follow the arguments in \cite{Holmerkdv} (see also \cite{Cavalcante} and references therein). 

We fix $-\frac74<s<\frac52$ and  $s\neq \frac12,\ \frac32 $. Form \eqref{eq:scaling}, we may assume 
$$\|u_0\|_{H^s(\mathbb{R}^+)}+\|f\|_{H^{\frac{s+2}{5}}(\mathbb{R}^+)}+\|g\|_{H^{\frac{s+1}{5}}(\mathbb{R}^+)}=\delta,$$ 
for $\delta$ sufficiently small.

Select an extension $\underline{u}_0\in H^s(\mathbb{R})$ of $u_0$ such that $\|\underline{u}_0\|_{H^s(\mathbb{R})}\leq 2\|u_0\|_{H^s(\mathbb{R}^+)}$. Let $b=b(s)<\frac{1}{2}$ and $\alpha=\alpha(s)>\frac12$  such that the estimates given in Propositions \ref{prop:bi1} and \ref{prop:bi2} are valid. Together with arguments in Subsections \ref{section4} and \ref{section4-1}, let
\begin{equation}\label{eq:solution}
u(t,x)= \mathcal{L}_+^{\lambda_1}\gamma_1(t,x)+\mathcal{L}_+^{\lambda_2}\gamma_2(t,x)+F(t,x),
\end{equation}
where $\gamma_i$ ($i=1,2$) will be chosen in the following in terms of given initial and boundary data $u_0$, $f$ and $g$, and $F(t,x)=e^{it\partial_x^5}\underline{u}_0 +\mathcal{D}w(t,x)$.

Recall \eqref{eq:entries} in Subsection \ref{section4} by 
\[a_j = \frac{1}{5B(0)\Gamma\left(\frac45\right)}\frac{\cos\left(\frac{(1+4\lambda_j)\pi}{10}\right)}{\sin\left(\frac{(1-\lambda_j)\pi}{5}\right)}\quad \text{and}\quad b_j = \frac{1}{5B(0)\Gamma\left(\frac45\right)}\frac{\cos\left(\frac{(4\lambda_j-3)\pi}{10}\right)}{\sin\left(\frac{(2-\lambda_j)\pi}{5}\right)}.\]
Note that $a_j$ and $b_j$ are well-defined when even $\lambda_j = 1-5n$ or $\lambda_j = 2-5n$ for $n \in \Z$.

By Lemmas \ref{holmer1} and \ref{trace1}, we get
\begin{equation}\label{eq:f}
f(t)= u(t,0) = a_1\gamma_1(t)+a_2\gamma_2(t)+F(t,0)
\end{equation}
and
\begin{equation}\label{eq:g}
g(t)=\partial_x u(t,0) =b_1\mathcal{I}_{-\frac15}\gamma_1(t)+b_2\mathcal{I}_{-\frac15}\gamma_2(t)+ \partial_xF(t,0).
\end{equation}
Together with \eqref{eq:f} and \eqref{eq:g}, we can write as a matrix form 
\[ \left[\begin{array}{c}
f(t)-F(t,0) \\
\mathcal{I}_{\frac15}g(t)- \mathcal{I}_{\frac15}\partial_x F(t,0)  \end{array} \right]=
A
 \left[\begin{array}{c}
\gamma_1(t)\\
\gamma_2(t)   \end{array} \right], \]
where
\[A(\lambda_1,\lambda_2)=
\left[\begin{array}{cc}
a_1 & a_2 \\
	b_1 & b_2 \end{array} \right].\]\
	
For $-\frac74<s<\frac52$ and  $s\neq \frac12,\ \frac32 $, the choice of parameters $\lambda_1$ and $\lambda_2$ satisfying the following conditions
\begin{equation}\label{eq:lambda condition}
\max(s-2,\ -3)<\lambda_j<\min\left(\frac12,s+\frac12\right),\; j=1,2,
\end{equation}
holds Lemma \ref{edbf}.

On the other hand, a direct calculation shows that the matrix $A(\lambda_1, \lambda_2)$ is invertible, when
\begin{equation}\label{eq:lambda condition1}
\lambda_1 - \lambda_2 \neq 5n, \qquad n \in \Z.
\end{equation}
From this observation, we can define the forcing functions $\gamma_1(t)$ and $\gamma_2(t)$ for any $\lambda_j$, $j=1,2$, satisfying \eqref{eq:lambda condition1} by
\begin{equation}\label{lambda} \left[\begin{array}{c}
\gamma_1(t)\\
\gamma_2(t)   \end{array} \right]=A^{-1}\left[\begin{array}{c}
f(t)-F(t,0) \\
\mathcal{I}_{\frac15}g(t)-  \mathcal{I}_{\frac15} \partial_xF(t,0)  \end{array} \right],
\end{equation}
which shows \eqref{eq:solution} satisfies $(\pt - \px^5)u = w$.

We choose $\lambda_1(s), \lambda_2(s)$ satisfying \eqref{eq:lambda condition} and \eqref{eq:lambda condition1} so that $A(\lambda_1, \lambda_2)$ is invertible. Define the solution operator on $[0,1]$ by\footnote{We may use $\psi$ defined in \eqref{eq:cutoff} by restricting on $[0,\infty)$.}
\begin{equation}\label{eq:solution op}
\Lambda u(t,x)= \psi(t)\mathcal{L}_+^{\lambda_1}\gamma_1(t,x)+\psi(t)\mathcal{L}_+^{\lambda_2}\gamma_2(t,x)+\psi(t)F(t,x),
\end{equation}
where 
\[\left[\begin{array}{c}
\gamma_1(t)\\
\gamma_2(t)   \end{array} \right]=A^{-1}\left[\begin{array}{c}
f(t)-F(t,0) \\
\mathcal{I}_{\frac15}g(t)-  \mathcal{I}_{\frac15} \partial_xF(t,0)  \end{array} \right],\]
and $F(t,x)=e^{it\partial_x^5}\underline{u}_0-\mathcal{D}(\partial_x(u^2))(t,x)$. 
 
We remark in view of \eqref{eq:f}, \eqref{eq:g} and \eqref{lambda} that  it is necessary to check $\gamma_i(t)$, $i=1,2$ to be well-defined in $H_0^{\frac{s+2}{5}}(\R^+)$ thanks to Lemmas \ref{edbf}. It follows from Lemmas \ref{grupo}, \ref{duhamel} and \ref{edbf}, Propositions \ref{prop:bi1} and \ref{prop:bi2} and Lemmas \ref{sobolevh0} and \ref{alta}. We omit the details and refer to \cite{Holmerkdv}.
 
Recall the solution space $Z_1^{s,\alpha,b}$ defined in Subsection \ref{sec:sol space} under the norm
\[\norm{v}_{Z_1^{s,b,\alpha}} = \sup_{t \in \R} \norm{v(t,\cdot)}_{H^s} + \sum_{j=0}^{1}\sup_{x \in \R} \norm{\px^jv(\cdot,x)}_{H^{\frac{s+2-j}{5}}} + \norm{v}_{X^{s,b} \cap D^{\alpha}}.\]

All estimates obtained in Sections \ref{sec:energy} and \ref{sec:bilinear} yield
\[\|\Lambda u\|_{Z_1^{s,\alpha,b}}\leq c(\|u_0\|_{H^s(\R^+)}+\|f\|_{H^{\frac{s+2}{5}}(\R^+)}+\|g\|_{H^{\frac{s+1}{5}(\R^+)}} ) + C_1\norm{u}_{Z_1^{s,\alpha,b}}^2.\]
Similarly,
\[\|\Lambda u_1 - \Lambda u_2\|_{Z_1^{s,\alpha,b}}\leq C_2(\norm{u_1}_{Z_1^{s,\alpha,b}}+\norm{u_2}_{Z_1^{s,\alpha,b}})\norm{u_1-u_2}_{Z_1^{s,\alpha,b}},\]
for $u_1(0,x) = u_2(0,x)$.

Once we choose $0 < \delta \ll 1$ satisfying
\[4cC_1\delta < 1 \quad \mbox{and} \quad 4cC_2\delta < \frac12,\]
we can show $\Lambda$ is a contraction map on $\set{u \in Z_1^{s,\alpha,b} : \norm{u}_{Z_1^{s,\alpha,b}} < 2c\delta}$, and it completes the proof.

\section{Proof of Theorem \ref{theorem12}}\label{sec:main proof 2}
We follows the same argument used in Section \ref{sec:main proof 1}, and hence it suffices to construct solution operator similarly as \eqref{eq:solution op} and to find suitable functions $\gamma_i$ as in \eqref{lambda}.

We seek $\gamma_i$, $i=1,2,3$, in terms of given initial and boundary data, which satisfy the following solution form:
\begin{equation}\label{eq:solution2} 
u(t,x)= \mathcal{L}_-^{\lambda_1}\gamma_1(t,x)+\mathcal{L}_-^{\lambda_2}\gamma_2(t,x)+\mathcal{L}_-^{\lambda_3}\gamma_3(t,x)+F(t,x).
\end{equation} 
Let $a_j$, $b_j$ and $c_j$ be constants depending on $\lambda_j$, $j=1,2,3$, given by
\[a_j = \frac{\cos\left(\frac{(1-6\lambda_j)\pi}{10}\right)}{5B(0)\Gamma\left(\frac45\right)\sin\left(\frac{(1-\lambda_j)\pi}{5}\right)},\; b_j = \frac{\cos\left(\frac{(7-6\lambda_j)\pi}{10}\right)}{5B(0)\Gamma\left(\frac45\right)\sin\left(\frac{(2-\lambda_j)\pi}{5}\right)}\;
\text{and}\;
c_j = \frac{\cos\left(\frac{(13-6\lambda_j)\pi}{10}\right)}{5B(0)\Gamma\left(\frac45\right)\sin\left(\frac{(3-\lambda_j)\pi}{5}\right)}.\] 
Note that $a_j$, $b_j$ and $c_j$ are well-defined when even $\lambda_j = 1-5n$ or $\lambda_j = 2-5n$ or $\lambda_j = 3-5n$ for $n \in \Z$.

By Lemmas \ref{holmer1} and \ref{trace1}, we get
\begin{equation}\label{eq:f2}
f(t)= u(t,0) = a_1\gamma_1(t)+a_2\gamma_2(t)+a_3\gamma_3(t)+F(t,0),
\end{equation}
\begin{equation}\label{eq:g2}
g(t)=\partial_x u(t,0) =b_1\mathcal{I}_{-\frac15}\gamma_1(t)+b_2\mathcal{I}_{-\frac15}\gamma_2(t)+b_3\mathcal{I}_{-\frac15}\gamma_3(t)+ \partial_xF(t,0)
\end{equation}
and
\begin{equation}\label{eq:h2}
h(t)=\partial_x^2 u(t,0) =c_1\mathcal{I}_{-\frac25}\gamma_1(t)+c_2\mathcal{I}_{-\frac25}\gamma_2(t)+c_3\mathcal{I}_{-\frac25}\gamma_3(t)+\partial_x^2F(t,0).
\end{equation}

Together with \eqref{eq:f2}, \eqref{eq:g2} and \eqref{eq:h2}, we can express $(\gamma_1,\gamma_2,\gamma_2)$ as the following form 
\[ \left[\begin{array}{c}
f(t)-F(t,0) \\
\mathcal{I}_{\frac15}g(t)- \mathcal{I}_{\frac15} \partial_x F(t,0) \\
\mathcal{I}_{\frac25}h(t)-  \mathcal{I}_{\frac25} \partial_x^2 F(t,0)
 \end{array} \right]=
A
\left[\begin{array}{c}
\gamma_1(t)\\
\gamma_2(t)\\
\gamma_3(t)   \end{array} \right], \]

where
\[A(\lambda_1,\lambda_2,\lambda_3)=
\left[\begin{array}{ccc}
a_1 & a_2& a_3 \\
b_1 & b_2&b_3\\
c_1 & c_2&c_3
 \end{array} \right].\]\

For $-\frac74<s<\frac52$ and  $s\neq \frac12,\ \frac32 $, the choice of parameters $\lambda_1$ and $\lambda_2$ satisfying the following conditions
\[\max(s-2,\ -2)<\lambda_j<\min\left(\frac12,s+\frac12\right),\; j=1,2,3,\]
holds Lemma \ref{edbf}.

On the other hand, a direct calculation shows that the matrix $A(\lambda_1, \lambda_2, \lambda_3)$ is invertible, when
\begin{equation}\label{eq:lambda condition4}
\lambda_1 -\lambda_2 \neq 5n \quad \mbox{and} \quad \lambda_2 -\lambda_3 \neq 5n \quad \mbox{and} \quad \lambda_3 -\lambda_1 \neq 5n \quad n\in \Z.
\end{equation}
From this observation, we can define the forcing functions $\gamma_1(t)$ and $\gamma_2(t)$ for any $\lambda_j$, $j=1,2,3$, satisfying \eqref{eq:lambda condition4} by

\[\left[\begin{array}{c}
\gamma_1(t)\\
\gamma_2(t)\\
\gamma_3(t)   \end{array} \right]=A^{-1}\left[\begin{array}{c}
f(t)-F(t,0) \\
\mathcal{I}_{\frac15}g(t)-\partial_x  \mathcal{I}_{\frac15} F(t,0) \\
\mathcal{I}_{\frac25}h(t)-\partial_x^2  \mathcal{I}_{\frac25} \partial_xF(t,0)
\end{array} \right],\]
which shows \eqref{eq:solution2} satisfies $(\pt - \px^5)u + \px(u^2) = 0$, in the sense of distributions, and $u(t,0)=f(t)$, $\partial_xu(t,0)=g(t)$ and $\partial_x^2u(t,0)=h(t)$.

We then perform the same argument used in the proof of Theorem \ref{theorem1} to complete the proof of Theorem \ref{theorem12}.

\end{document}